\begin{document}

\newtheorem{theorem}{Theorem}
\newtheorem{lemma}{Lemma}
\newtheorem{proposition}{Proposition}
\newtheorem{rmk}{Remark}
\newtheorem{example}{Example}
\newtheorem{exercise}{Exercise}
\newtheorem{definition}{Definition}
\newtheorem{corollary}{Corollary}
\newtheorem{notation}{Notation}
\newtheorem{claim}{Claim}

\newtheorem{dif}{Definition}

 \newtheorem{thm}{Theorem}[section]
 \newtheorem{cor}[thm]{Corollary}
 \newtheorem{lem}[thm]{Lemma}
 \newtheorem{prop}[thm]{Proposition}
 \theoremstyle{definition}
 \newtheorem{defn}[thm]{Definition}
 \theoremstyle{remark}
 \newtheorem{rem}[thm]{Remark}
 \newtheorem*{ex}{Example}
 \numberwithin{equation}{section}

\newcommand{\vertiii}[1]{{\left\vert\kern-0.25ex\left\vert\kern-0.25ex\left\vert #1
    \right\vert\kern-0.25ex\right\vert\kern-0.25ex\right\vert}}

\newcommand{\R}{{\mathbb R}}
\newcommand{\C}{{\mathbb C}}
\newcommand{\U}{{\mathcal U}}
\newcommand{\norm}[1]{\left\|#1\right\|}
\renewcommand{\(}{\left(}
\renewcommand{\)}{\right)}
\renewcommand{\[}{\left[}
\renewcommand{\]}{\right]}
\newcommand{\f}[2]{\frac{#1}{#2}}
\newcommand{\im}{i}
\newcommand{\cl}{{\mathcal L}}
\newcommand{\ck}{{\mathcal K}}

\newcommand{\al}{\alpha}
\newcommand{\vro}{\varrho}
\newcommand{\be}{\beta}
\newcommand{\wh}[1]{\widehat{#1}}
\newcommand{\ga}{\gamma}
\newcommand{\Ga}{\Gamma}
\newcommand{\de}{\delta}
\newcommand{\ben}{\beta_n}
\newcommand{\De}{\Delta}
\newcommand{\ve}{\varepsilon}
\newcommand{\ze}{\zeta}
\newcommand{\Th}{\Theta}
\newcommand{\ka}{\kappa}
\newcommand{\la}{\lambda}
\newcommand{\laj}{\lambda_j}
\newcommand{\lak}{\lambda_k}
\newcommand{\La}{\Lambda}
\newcommand{\si}{\sigma}
\newcommand{\Si}{\Sigma}
\newcommand{\vp}{\varphi}
\newcommand{\om}{\omega}
\newcommand{\Om}{\Omega}
\newcommand{\ra}{\rightarrow}

\newcommand{\ro}{{\mathbb  R}}
\newcommand{\rn}{{\mathbb  R}^n}
\newcommand{\rd}{{\mathbb  R}^d}
\newcommand{\rmm}{{\mathbb  R}^m}
\newcommand{\rone}{\mathbb  R}
\newcommand{\rtwo}{\mathbb  R^2}
\newcommand{\rthree}{\mathbb  R^3}
\newcommand{\rfour}{\mathbb  R^4}
\newcommand{\ronen}{{\mathbb  R}^{n+1}}
\newcommand{\ku}{\mathbb  u}
\newcommand{\kw}{\mathbb  w}
\newcommand{\kf}{\mathbb  f}
\newcommand{\kz}{\mathbb  z}

\newcommand{\N}{\mathbb  N}

\newcommand{\tn}{\mathbb  T^n}
\newcommand{\tone}{\mathbb  T^1}
\newcommand{\ttwo}{\mathbb  T^2}
\newcommand{\tthree}{\mathbb  T^3}
\newcommand{\tfour}{\mathbb  T^4}

\newcommand{\zn}{\mathbb  Z^n}
\newcommand{\zp}{\mathbb  Z^+}
\newcommand{\zone}{\mathbb  Z^1}
\newcommand{\zz}{\mathbb  Z}
\newcommand{\ztwo}{\mathbb  Z^2}
\newcommand{\zthree}{\mathbb  Z^3}
\newcommand{\zfour}{\mathbb  Z^4}

\newcommand{\hn}{\mathbb  H^n}
\newcommand{\hone}{\mathbb  H^1}
\newcommand{\htwo}{\mathbb  H^2}
\newcommand{\hthree}{\mathbb  H^3}
\newcommand{\hfour}{\mathbb  H^4}

\newcommand{\cone}{\mathbb  C^1}
\newcommand{\ctwo}{\mathbb  C^2}
\newcommand{\cthree}{\mathbb  C^3}
\newcommand{\cfour}{\mathbb  C^4}
\newcommand{\dpr}[2]{\langle #1,#2 \rangle}

\newcommand{\sn}{\mathbb  S^{n-1}}
\newcommand{\sone}{\mathbb  S^1}
\newcommand{\stwo}{\mathbb  S^2}
\newcommand{\sthree}{\mathbb  S^3}
\newcommand{\sfour}{\mathbb  S^4}

\newcommand{\lp}{L^{p}}
\newcommand{\lppr}{L^{p'}}
\newcommand{\lqq}{L^{q}}
\newcommand{\lr}{L^{r}}
\newcommand{\echi}{(1-\chi(x/M))}
\newcommand{\chip}{\chi'(x/M)}

\newcommand{\wlp}{L^{p,\infty}}
\newcommand{\wlq}{L^{q,\infty}}
\newcommand{\wlr}{L^{r,\infty}}
\newcommand{\wlo}{L^{1,\infty}}

\newcommand{\lprn}{L^{p}(\rn)}
\newcommand{\lptn}{L^{p}(\tn)}
\newcommand{\lpzn}{L^{p}(\zn)}
\newcommand{\lpcn}{L^{p}(\cn)}
\newcommand{\lphn}{L^{p}(\cn)}

\newcommand{\lprone}{L^{p}(\rone)}
\newcommand{\lptone}{L^{p}(\tone)}
\newcommand{\lpzone}{L^{p}(\zone)}
\newcommand{\lpcone}{L^{p}(\cone)}
\newcommand{\lphone}{L^{p}(\hone)}

\newcommand{\lqrn}{L^{q}(\rn)}
\newcommand{\lqtn}{L^{q}(\tn)}
\newcommand{\lqzn}{L^{q}(\zn)}
\newcommand{\lqcn}{L^{q}(\cn)}
\newcommand{\lqhn}{L^{q}(\hn)}

\newcommand{\lo}{L^{1}}
\newcommand{\lt}{L^{2}}
\newcommand{\li}{L^{\infty}}
\newcommand{\beqn}{\begin{eqnarray*}}
\newcommand{\eeqn}{\end{eqnarray*}}
\newcommand{\pplus}{P_{Ker[\cl_+]^\perp}}

\newcommand{\co}{C^{1}}
\newcommand{\ci}{\mathcal I}
\newcommand{\coi}{C_0^{\infty}}

\newcommand{\ca}{\mathcal A}
\newcommand{\cs}{\mathcal S}
\newcommand{\cm}{\mathcal M}
\newcommand{\cf}{\mathcal F}
\newcommand{\cb}{\mathcal B}
\newcommand{\ce}{\mathcal E}
\newcommand{\cd}{\mathcal D}
\newcommand{\cn}{\mathbb N}
\newcommand{\cz}{\mathcal Z}
\newcommand{\crr}{\mathbb  R}
\newcommand{\cc}{\mathcal C}
\newcommand{\ch}{\mathcal H}
\newcommand{\cq}{\mathcal Q}
\newcommand{\cp}{\mathcal P}
\newcommand{\cx}{\mathcal X}
\newcommand{\eps}{\epsilon}

\newcommand{\pv}{\textup{p.v.}\,}
\newcommand{\loc}{\textup{loc}}
\newcommand{\intl}{\int\limits}
\newcommand{\iintl}{\iint\limits}
\newcommand{\dint}{\displaystyle\int}
\newcommand{\diint}{\displaystyle\iint}
\newcommand{\dintl}{\displaystyle\intl}
\newcommand{\diintl}{\displaystyle\iintl}
\newcommand{\liml}{\lim\limits}
\newcommand{\suml}{\sum\limits}
\newcommand{\ltwo}{L^{2}}
\newcommand{\supl}{\sup\limits}
\newcommand{\df}{\displaystyle\frac}
\newcommand{\p}{\partial}
\newcommand{\Ar}{\textup{Arg}}
\newcommand{\abssigk}{\widehat{|\si_k|}}
\newcommand{\ed}{(1-\p_x^2)^{-1}}
\newcommand{\tT}{\tilde{T}}
\newcommand{\tV}{\tilde{V}}
\newcommand{\wt}{\widetilde}
\newcommand{\Qvi}{Q_{\nu,i}}
\newcommand{\sjv}{a_{j,\nu}}
\newcommand{\sj}{a_j}
\newcommand{\pvs}{P_\nu^s}
\newcommand{\pva}{P_1^s}
\newcommand{\cjk}{c_{j,k}^{m,s}}
\newcommand{\Bjsnu}{B_{j-s,\nu}}
\newcommand{\Bjs}{B_{j-s}}
\newcommand{\Ly}{\cl_+i^y}
\newcommand{\dd}[1]{\f{\partial}{\partial #1}}
\newcommand{\czz}{Calder\'on-Zygmund}
\newcommand{\chh}{\mathcal H}

\newcommand{\lbl}{\label}
\newcommand{\beq}{\begin{equation}}
\newcommand{\eeq}{\end{equation}}
\newcommand{\beqna}{\begin{eqnarray*}}
\newcommand{\eeqna}{\end{eqnarray*}}
\newcommand{\bp}{\begin{proof}}
\newcommand{\ep}{\end{proof}}
\newcommand{\bprop}{\begin{proposition}}
\newcommand{\eprop}{\end{proposition}}
\newcommand{\bt}{\begin{theorem}}
\newcommand{\et}{\end{theorem}}
\newcommand{\bex}{\begin{Example}}
\newcommand{\eex}{\end{Example}}
\newcommand{\bc}{\begin{corollary}}
\newcommand{\ec}{\end{corollary}}
\newcommand{\bcl}{\begin{claim}}
\newcommand{\ecl}{\end{claim}}
\newcommand{\bl}{\begin{lemma}}
\newcommand{\el}{\end{lemma}}
\newcommand{\dea}{(-\De)^\be}
\newcommand{\naa}{|\nabla|^\be}
\newcommand{\cj}{{\mathcal J}}
\newcommand{\ubb}{{\mathbb  u}}

\title[Solitary waves for the  power degenerate NLS]{Solitary waves for the power degenerate NLS - existence and stability}

\author{Vishnu Iyer}
\address{Department of Mathematics,
	University of Alabama - Birmingham, 
	University Hall, Room 4050, 
	1402 10th Avenue South
	Birmingham AL 35294-1241
	 }\email{vpiyer@uab.edu}
 
\author[Atanas G. Stefanov]{\sc Atanas G. Stefanov}
\address{ Department of Mathematics,
	University of Alabama - Birmingham, 
	University Hall, Room 4005, 
	1402 10th Avenue South
	Birmingham AL 35294-1241
	 }
\email{stefanov@uab.edu}

\subjclass[2010]{Primary 35Q55, 35Q40, 35B35, Secondary 35Q51. }

\keywords{Power degenerate Schr\"odinger equations, solitons}

\date{\today}
 
\begin{abstract}
 We consider a semilinear Schr\"odinger equation, driven by the power degenerate second order differential operator $\nabla\cdot (|x|^{2a} \nabla), a\in (0,1)$. We construct the solitary waves, in the sharp range of parameters, as minimizers of the Caffarelli-Kohn-Nirenberg's inequality. Depending on the parameter $a$ and the nonlinearity, we establish a number of properties, such as positivity, smoothness (away from the origin) and almost exponential decay. Then, and as a consequence of our variational constrcution, we completely characterize the spectral stability of the said solitons. 
 
 We pose some natural  conjectures,  which are still open - such as the radiality of the ground states, the non-degeneracy and most importantly uniqueness. 
\end{abstract}

\thanks{ Iyer acknowledges partial support  through a graduate research fellowship from NSF-DMS \# 2204788. Stefanov is partially supported by   NSF-DMS \# 2204788.}

\maketitle

\section{Introduction}

In the present paper, we consider the  following degenrate  semi-linear  Schrodinger
equation,
\begin{equation}
	\label{14}
	\begin{cases}
	&	iu_{t}+\nabla\cdot(|x|^{2a}\nabla u)+|u|^{p-1}u=  0, x\in \rd, t\in \rone \\
	&	u(0,x)=  u_{0}(x)
	\end{cases}
\end{equation}
where $u:\mathbb {R_{+}}\times\mathbb {R}^{d}\to\mathbb {C}$. Many physical processes, such as in plasma physics and more magnetism models for configurations, away from equilibrium,  are driven by Schr\"odinger equations,  $i \p_t+\nabla\cdot(\si(x)\nabla u)$ with degenerate $\si(x)$,  \cite{DPL}. For more mathematically oriented treatments, see \cite{DL, Ich}. 

In \cite{PTW}, the authors developed a high regularity local well-posedness theory for \eqref{14}, with pretty general divergence form operator, $\nabla\cdot(\si(x)\nabla u)$, but only on $\rtwo$. It seems that general well-posedness theory is lacking at present, and in particular such important issues such as global solutions vs. blow-up alternative, criteria for finite time blow up, etc. remain open. At the same time, and at least formally (i.e. for smooth enough solutions),  
the model \eqref{14}  obeys conservation laws such as mass
and  energy, given by 
\begin{eqnarray}
	\label{182}
	M(t) &:=& \int|u(t,x)|^{2}\ dx=M(0) \\
	\label{24}
	E(t) &:= & \frac{1}{2}\int|x|^{2a}|\nabla u(t,x)|^{2}\ dx-\frac{1}{p+1}\int|u|^{p+1}\ dx=E(0).
\end{eqnarray}
\subsection{Standing wave solutions}
Our main interest in this work will be in the existence and properties of  solitary wave solutions of the form $u(t,x)=e^{i\omega t}\phi(x)$,
where $\phi(x)$ is a real valued function. Observe that any such
classical solutions $\phi$ should satisfy the profile equation, 
\begin{equation}
	-\nabla\cdot(|x|^{2a}\nabla\phi)+\omega\phi-|\phi|^{p-1}\phi=0.
	\label{120}
\end{equation}
We discuss the existence of these waves (under essentially sharp necessary and sufficient conditions on the parameters), along with appropriate pointwise decay properties, see Theorem \ref{theo:10} below. We would like now to focus our attention to the stability of the solutions 
$e^{i\omega t}\phi$ in the framework of the time-dependent problem \eqref{14}. 
We should mention the work \cite{ZZ}, where solitary waves for a more general models were considered, namely, with an inhomogeneous non-linearity $|x|^c |u|^p u$. The obtained results therein concern instability by blow-up\footnote{see the precise statement in Definition \ref{defi:18}} for such waves, which do occur for specific values of the parameters (see for example our Theorem \ref{theo:30} below). Our goal in this paper  is to provide a consistent and exhaustive classification of all the waves occuring in \eqref{120}. 
\subsection{Linearized problem} 
To this end, we look at the linearized problem first. We set $u=e^{i \om t} (\phi+ v)$ and plug it in \eqref{14}. After splitting in real and imaginary parts, $v=v_1+i v_2$ and ignoring $O(v^2)$ terms, we arrive at the following linearized problem 
\begin{equation}
	\label{l:10} 
	\vec{v}_t= \cj \cl \vec{v}, 
\end{equation}
where 
\begin{eqnarray*}
\cj &=& \left(\begin{array}{cc} 
	0 & -1 \\ 
	 1 & 0 
\end{array}\right), \ \ \cl=\left(\begin{array}{cc} 
\cl_+ & 0 \\ 
0 & \cl_-
\end{array}\right), \\
	\cl_+ &=& -\nabla\cdot(|x|^{2a}\nabla  )+\om- p \phi^{p-1} \\
	\cl_-  &=&  -\nabla\cdot(|x|^{2a}\nabla )+\om-  \phi^{p-1} 
\end{eqnarray*} 
As the linear evolution of this linearized problem is driven by the semi-group generated by $\cj\cl$, we  say that the solitary wave solution $e^{i \om t} \phi$ of  \eqref{14} is {\it spectrally stable}, if the eigenvalue problem 
\begin{equation}
	\label{l:20} 
\cj \cl \vec{v}=\la \vec{v}
\end{equation}
does not have non-trivial solutions\footnote{The definition of $D(\cl)$ to be made precise below} $(\la, \vec{v}): \Re \la>0, \vec{v}\in D(\cl)$.

\subsection{Existence results}
The problem for the existence of the waves, i.e. \eqref{120} and even more general models,  has been studied quite extensively in the last thirty or so years. In this context, we would like to mention the works \cite{Cha, CHCH, DLY, GGW}. Most of these works are purely elliptic PDE contributions, as they employ variational methods, based on the powerful machinery of the mountain pass principle. Such constructions are, generally speaking, not well-suited for stability considerations. The main reason is that one needs certain specific {\it a posteriori} information the linearized operators arising in these considerations, which is unavailable through the mountain pass formalism. 

 In our approach, we use different, constrained optimization method, namely the Weinstein method. The main idea is to constrain the potential energy and maximize the kinetic energy\footnote{One can take,  the completely equivalent reverse route, which is to constrain the kinetic energy and maximize the potential energy. We choose the former method for a slight technical advantage and convenience}.  In addition, we show that the waves are smooth away from the origin and also, we develop sharp decay estimates as $|x|\to +\infty$.  Finally, under the conjectured radiality  on the wave,  we provide a precise asymptotics at $x=0$. It  shows that as long as $a\neq 0$, the function cannot be $C^2(\rd)$. 
 
 More precisely, we have the following existence theorem. 
\begin{theorem}
	\label{theo:10} 
	Let $d\geq 1$ and $a, p>1$ satisfy  the relation
	\begin{equation}
		\label{925} 
		0\leq a<1,  \f{1}{2}  - \f{1}{p+1} < \f{1-a}{d}. 
	\end{equation}
	Then, the equation \eqref{120} has a distributional positive solution $\phi:  \phi\in C^\infty(\rd\setminus\{0\})$. 
	 Moreover, $\phi\in H^{1,a}(\rd)$ (see the definition \eqref{h1a} below),  and there is  the following  pointwise exponential bound 
	\begin{equation}
		\label{18} 
		0<\phi(x) \leq C e^{-\de |x|^{1-a}},
	\end{equation}
for some $\de>0, C>0$. 
In case the solution $\phi$ is radial, then it is also continuous at zero, and it satisfies for $0<\rho<<1$, 
\begin{equation}
	\label{26} 
	\begin{cases}
		\phi'(\rho) = - \f{\phi^p(0)-\om \phi(0)}{d} \rho^{1-2a}+o(\rho^{1-2a}) \\
		\phi''(\rho) =  \f{2a-1}{d} (\phi^p(0)-\om \phi(0)) \rho^{-2a} + o(\rho^{-2a}).
	\end{cases}, \ \textup{where} \ \ \phi^p(0)-\om \phi(0)>0.
\end{equation} 
In particular, if $a\in (0, 1/2]$, $\phi\in C^1(0, \infty)$, while for $a\in (1/2,1)$, $\phi'(\rho)$ blows up as $\rho\to 0+$. At the same time, $\phi''$ always blows up at zero, $a\in (0,1)$. 
\end{theorem}
{\bf Remark:} We conjecture that all solutions $\phi$ of \eqref{120} are indeed radial, in which case we have the asymptotics \eqref{26}. Unfortunately, we currently do not have  a proof of this, which is the reason we only provide a conditional  statement as \eqref{26}. 

Our next result are the appropriate Pohozaev's identities for the elliptic problem \eqref{120}. 
\begin{proposition}(Pohozaev's identities)
	\label{prop:Poh}
	
	Assume  $a\in [0,1), p>1$ and let $\phi\in H^{1,a}(\rd)\cap L^{p+1}$ be a  distributional solution to \eqref{120}. Then, 
	\begin{eqnarray}
		\label{500} 
		\int|x|^{2a}|\nabla\phi|^{2}\ dx &=& \frac{d(p-1)}{2(p+1)(1-a)} \int \phi^{p+1} dx \\
		\label{510} 
		\om \int \phi^2(x) dx &=&  \left(1-\frac{d(p-1)}{2(p+1)(1-a)}\right) \int \phi^{p+1} dx. 
	\end{eqnarray}
\end{proposition}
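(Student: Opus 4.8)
The plan is to reduce both identities to a pair of scalar integral relations followed by a two-line piece of linear algebra. Throughout write $K=\int|x|^{2a}|\nabla\phi|^2\,dx$, $N=\int\phi^2\,dx$ and $P=\int\phi^{p+1}\,dx$, all finite because $\phi\in H^{1,a}(\rd)\cap L^{p+1}$. The first relation is the energy identity, obtained by pairing \eqref{120} with $\phi$ itself and integrating by parts in the degenerate Dirichlet form,
\begin{equation}
K+\om N = P. \tag{E}
\end{equation}
The second, and genuinely new, relation is the Pohozaev (dilation) identity
\begin{equation}
\f{2-2a-d}{2}\,K - \f{d}{2}\,\om N + \f{d}{p+1}\,P = 0. \tag{P}
\end{equation}
Granting (E) and (P), the stated identities follow by elimination: substituting $\om N = P-K$ from (E) into (P) collapses the $K$-coefficient to $\tfrac{2-2a-d+d}{2}=1-a$ and the $P$-coefficient to $-\f{d(p-1)}{2(p+1)}$, so that $(1-a)K=\f{d(p-1)}{2(p+1)}P$, which is \eqref{500}; then $\om N=P-K$ gives \eqref{510} at once.

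The main content is (P), and I would derive it by scaling rather than by directly testing against $x\cdot\nabla\phi$, since the one-parameter family is easier to control. Set $\phi_\la(x)=\phi(\la x)$ and compute, via $y=\la x$,
\begin{equation*}
\int|x|^{2a}|\nabla\phi_\la|^2\,dx = \la^{2-2a-d}K,\qquad \int\phi_\la^2\,dx=\la^{-d}N,\qquad \int\phi_\la^{p+1}\,dx=\la^{-d}P.
\end{equation*}
Hence the action $S(\psi)=\f{1}{2}\int|x|^{2a}|\nabla\psi|^2\,dx+\f{\om}{2}\int\psi^2\,dx-\f{1}{p+1}\int\psi^{p+1}\,dx$ satisfies $S(\phi_\la)=\f{1}{2}\la^{2-2a-d}K+\f{\om}{2}\la^{-d}N-\f{1}{p+1}\la^{-d}P$, an explicit smooth function of $\la$. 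Since $\phi$ solves the Euler--Lagrange equation \eqref{120}, formally $\frac{d}{d\la}S(\phi_\la)\big|_{\la=1}=\dpr{S'(\phi)}{x\cdot\nabla\phi}=0$; differentiating the explicit expression at $\la=1$ reproduces exactly (P). The single intrinsic computation hidden here, which also appears in the direct integration-by-parts route, is $\nabla\cdot(|x|^{2a}x)=(d+2a)|x|^{2a}$, which is what produces the exponent $2-2a-d$.

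The hard part is justifying these manipulations for a merely distributional solution in the presence of the degenerate weight $|x|^{2a}$ and a possible singularity at the origin. Both (E) and the pairing $\dpr{S'(\phi)}{x\cdot\nabla\phi}=0$ require an approximation argument: I would insert cutoffs $\chi_\ve$ supported in the annulus $\ve\le|x|\le\ve^{-1}$, carry out the integrations by parts there, where $\phi\in C^\infty$ by Theorem \ref{theo:10}, and then let $\ve\to0$. At infinity the boundary terms are killed by the exponential bound \eqref{18}. At the origin the inner boundary contributions live on spheres $\{|x|=\ve\}$ and carry the degenerate weight $|x|^{2a}$ together with an extra power of $|x|$ coming from the generator $x\cdot\nabla$, so that, combined with the surface measure $\ve^{d-1}$ and the available gradient control, they tend to $0$. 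Establishing that these terms genuinely vanish in the limit—rather than merely cancelling formally—is the step that forces one to use the degeneracy carefully, and is where I expect the analysis to require the most attention.
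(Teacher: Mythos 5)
Your overall architecture coincides with the paper's: the proof in Appendix A likewise establishes the energy identity (your (E), the paper's \eqref{1400}) by testing against $\phi$, and the dilation identity (your (P), the paper's \eqref{1450}) by testing against $x\cdot\nabla\phi$ with cutoffs at the origin and at infinity, and then eliminates. Your algebra is correct, and deriving (P) by differentiating $S(\phi_\la)$ at $\la=1$ is only a cosmetic repackaging of the direct test, as you acknowledge, since the identity $\f{d}{d\la}S(\phi_\la)\big|_{\la=1}=\dpr{S'(\phi)}{x\cdot\nabla\phi}$ has to be justified by exactly the same truncation argument.

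The gap is in the treatment of the inner boundary terms, and it is concentrated in (E), not in (P). For (P) your heuristic is right: the cutoff errors there are of the form $\int_{|x|\sim\ve}|x|^{2a}|\nabla\phi|^2\,\f{|x|}{\ve}|\nabla\chi|$, $\int_{|x|\sim\ve}\phi^2$, $\int_{|x|\sim\ve}\phi^{p+1}$, all of which are tails of convergent integrals and vanish. But for (E) the problematic term is
\begin{equation*}
\f{1}{\ve}\int_{|x|\sim\ve}|x|^{2a}\,|\nabla\phi|\,|\phi|\,|\nabla\chi|(x/\ve)\,dx,
\end{equation*}
which carries no extra power of $|x|$ from a generator $x\cdot\nabla$ and is \emph{not} controlled by surface measure plus gradient control alone. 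The paper bounds it by H\"older as $C\,\ve^{\,a+d\f{q-1}{2q}-1}\,\bigl\|\,|x|^a\nabla\phi\bigr\|_{L^2}\,\|\phi\|_{L^{2q}(|x|\sim\ve)}$, and the exponent of $\ve$ vanishes precisely when $2q=\f{2d}{d-2(1-a)}=p^*$, the endpoint of the Caffarelli--Kohn--Nirenberg embedding \eqref{25}; the term then tends to $0$ only because $\|\phi\|_{L^{p^*}(|x|\sim\ve)}\to 0$ as the tail of a convergent integral. This borderline computation is the actual content of the proof and your proposal does not supply it; indeed you classify (E) as routine. Separately, your appeal to the exponential bound \eqref{18} at infinity is both unnecessary and too strong for the statement as given: the proposition is asserted for an arbitrary distributional solution in $H^{1,a}\cap L^{p+1}$, whereas \eqref{18} is established only for the positive variational wave. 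The paper's proof kills the outer boundary terms using only $\ve$- and $N$-tails of the convergent integrals defining $\|\phi\|_{H^{1,a}}$, $\|\phi\|_{L^2}$, $\|\phi\|_{L^{p+1}}$ (together with the factor $N^{a-1}\to 0$, $a<1$), which is what makes the proposition valid at the stated level of generality.
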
 
We present a proof of this in the appendix.
An immediate corollary of Proposition \ref{prop:Poh},  is the following necessary condition for the existence of the waves. 
\begin{corollary}
	\label{cor:10} 
	Let $\om>0, p>1, a\geq 0$. Then, localized solutions  $\phi>0$ of \eqref{120} exist, only if $a<1$ and there exists $\theta\in (0,1)$, so that 
	\begin{equation}
		\label{523}
			\f{1}{p+1} = \f{1}{2}  - \f{\theta}{d} (1-a).
	\end{equation}
\end{corollary}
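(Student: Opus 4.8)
The plan is to read the corollary off as a purely algebraic consequence of the two Pohozaev identities \eqref{500}--\eqref{510}, combined with the strict positivity of the relevant integrals for a genuine positive localized wave. The candidate for the exponent $\theta$ in \eqref{523} is in fact forced on us: rearranging \eqref{523} gives $\frac{\theta}{d}(1-a)=\frac12-\frac1{p+1}=\frac{p-1}{2(p+1)}$, i.e. $\theta=\frac{d(p-1)}{2(p+1)(1-a)}$, which is \emph{exactly} the constant appearing on the right-hand sides of \eqref{500} and \eqref{510}. So I would set $\theta:=\frac{d(p-1)}{2(p+1)(1-a)}$ and show $0<\theta<1$; the identity \eqref{523} will then hold by construction, via a one-line substitution.

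First I would dispose of the claim $a<1$. Since $a\geq 1$ is permitted in the hypothesis, I cannot merely invoke Proposition \ref{prop:Poh} (whose statement assumes $a<1$); instead I would record the \emph{undivided} form of the Pohozaev identity that the appendix computation produces \emph{before} dividing by $1-a$, namely
\begin{equation*}
(1-a)\int |x|^{2a}|\nabla\phi|^2\,dx=\frac{d(p-1)}{2(p+1)}\int\phi^{p+1}\,dx,
\end{equation*}
obtained by combining the Nehari identity $\int|x|^{2a}|\nabla\phi|^2+\om\int\phi^2=\int\phi^{p+1}$ with the dilation identity (multiply \eqref{120} by $x\cdot\nabla\phi$ and integrate). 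This is valid for \emph{every} $a\geq 0$, since only integrations by parts with vanishing boundary terms enter. For a positive localized solution $\phi$ is nonconstant and nontrivial, whence $\int|x|^{2a}|\nabla\phi|^2>0$ and $\int\phi^{p+1}>0$; as $\frac{d(p-1)}{2(p+1)}>0$, the displayed identity forces $1-a>0$. The borderline $a=1$ is excluded as well, for then the left side vanishes while the right side is strictly positive.

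With $a<1$ in hand, Proposition \ref{prop:Poh} applies and I would finish with two sign computations. From \eqref{500},
\begin{equation*}
\theta=\frac{d(p-1)}{2(p+1)(1-a)}=\frac{\int|x|^{2a}|\nabla\phi|^2\,dx}{\int\phi^{p+1}\,dx}>0,
\end{equation*}
the ratio of two strictly positive quantities. From \eqref{510}, using $\om>0$ and $\int\phi^2>0$,
\begin{equation*}
1-\theta=\frac{\om\int\phi^2\,dx}{\int\phi^{p+1}\,dx}>0,
\end{equation*}
so $\theta<1$. Substituting into the right side of \eqref{523} yields $\frac12-\frac{\theta}{d}(1-a)=\frac12-\frac{p-1}{2(p+1)}=\frac1{p+1}$, which is \eqref{523}.

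The computations are routine, and the points needing genuine care are \emph{a priori} facts rather than new estimates. The first is the justification of the integrations by parts, with no boundary contribution either at infinity or at the singular origin; this rests on the smoothness away from the origin and the exponential decay supplied by Theorem \ref{theo:10}, and on checking that the weight $|x|^{2a}$ produces no spurious term at $x=0$. The second, and what I regard as the crux, is the bookkeeping certifying the undivided Pohozaev identity for \emph{all} $a\geq 0$ rather than only $a<1$: it is precisely this step that delivers the necessity of $a<1$, everything else being algebra and positivity.
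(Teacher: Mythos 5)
Your proposal is correct and follows essentially the same route as the paper: the paper's own (very short) argument reads $a<1$ off the positivity of the coefficient in \eqref{500}, reads $\theta<1$ off the sign of the coefficient in \eqref{510} using $\om>0$, and identifies $\theta=\frac{d(p-1)}{2(p+1)(1-a)}$ exactly as you do. Your only departure is the explicit use of the undivided identity $(1-a)\int|x|^{2a}|\nabla\phi|^2=\frac{d(p-1)}{2(p+1)}\int\phi^{p+1}$ to exclude $a\geq1$ without circularly invoking Proposition \ref{prop:Poh} (which is stated only for $a<1$); this is a legitimate and slightly more careful rendering of the step the paper performs implicitly.
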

Indeed, the statement $a<1$ follows from the positivity of the coefficient in \eqref{500}, while by \eqref{510}, $\left(1-\frac{d(p-1)}{2(p+1)(1-a)}\right)>0$, we obtain 
\begin{equation}
	\label{527} 
	\f{1-a}{d}>\f{1}{2}-\f{1}{p+1},
\end{equation}
which is equivalent to  \eqref{523} for some $\theta\in (0,1)$. 

Based on Corollary \ref{cor:10} and  modulo the assumption\footnote{which is also likely to be necessary} $\om>0$, the existence results in Theorem \ref{theo:10} are optimal as stated. 

\subsection{Stability results}
Next, we present the spectral stability results for the waves constructed in Theorem \ref{theo:10}. 
\begin{theorem}
	\label{theo:20} 
Let $d, a, \theta, p$ and the function $\phi$ are as in Theorem \ref{theo:10}. Then, the wave $e^{i \om t} \phi$, is spectrally stable if and only if $1<p\leq 1+\f{4(1-a)}{d}$. 
\end{theorem}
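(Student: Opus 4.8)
The plan is to reduce the spectral stability question to a scalar criterion à la Grillakis–Shatah–Strauss and Weinstein, exploiting the variational origin of $\phi$. Since $\phi$ solves \eqref{120}, it is clear that $\cl_- \phi = 0$ and that $\cl_-$ is a nonnegative operator (because $\phi>0$ is its ground state, so $\ker \cl_- = \mathrm{span}\{\phi\}$ and the rest of the spectrum is strictly positive). The operator $\cl_+$, obtained by differentiating the profile equation, will have exactly one negative eigenvalue (this is where the variational construction as a constrained minimizer pays off: the Morse index of $\cl_+$ is controlled by the nature of the critical point). The standard theory then says that the nonzero eigenvalues of $\cj\cl$ with $\Re\la>0$ are governed by a single scalar quantity, namely the sign of $\langle \cl_+^{-1}\phi, \phi\rangle$, equivalently the sign of $\frac{d}{d\om}\int \phi_\om^2\,dx$ along the branch of solitons.

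The key steps, in order, are as follows. First I would set up the spectral decomposition: establish that $\cl_-\geq 0$ with kernel spanned by $\phi$, and that $\cl_+$ has Morse index exactly one with a one-dimensional kernel (or trivial kernel) coming from the symmetries; the scaling symmetry $x\mapsto\la x$ is the relevant generator here. Second, I would invoke the abstract instability/stability index theorem (Grillakis–Shatah–Strauss, or the Hamiltonian index-counting results of Kapitula–Kevrekidis–Sandstede / Pelinovsky) which equates the number of eigenvalues of $\cj\cl$ with positive real part to $n(\cl) - n(d)$, where $n(\cl)$ is the total negative index of $\cl$ and $n(d)$ is the number of positive eigenvalues of the scalar Hessian matrix built from $\frac{d}{d\om}$. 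Third, and this is the crux, I would compute the sign of $\partial_\om \|\phi_\om\|_{L^2}^2$ explicitly. Here is where the Pohozaev identities \eqref{500}–\eqref{510} become decisive: they pin down the $\om$-dependence of $\int\phi^2$ through the scaling relations of the degenerate problem. Using the homogeneity of the operator $\nabla\cdot(|x|^{2a}\nabla)$ under the anisotropic scaling $\phi_\om(x)=\om^\alpha \phi(\om^\beta x)$ with exponents dictated by \eqref{120}, one finds $\|\phi_\om\|_{L^2}^2 = \om^\gamma \|\phi_1\|_{L^2}^2$ for an explicit exponent $\gamma=\gamma(d,a,p)$, and the sign of $\gamma$ flips precisely at the threshold $p=1+\frac{4(1-a)}{d}$.

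The main obstacle I anticipate is twofold. The analytic difficulty is rigorously justifying the spectral setup for the \emph{degenerate} operators $\cl_\pm$: because the coefficient $|x|^{2a}$ vanishes at the origin, the usual elliptic regularity and the standard Sturm–Liouville/Perron–Frobenius arguments for the simplicity of the ground state and the Morse index count must be adapted to the weighted space $H^{1,a}$, and one must verify that $\cj\cl$ generates a well-posed dynamics and that the abstract index theorem applies in this non-self-adjoint, weighted setting. The structural difficulty is confirming that the kernel of $\cl_+$ is exactly what the symmetries predict — equivalently, a non-degeneracy statement — which the authors themselves flag as an open conjecture; the proof must therefore route around unconditional non-degeneracy, presumably by using only the scaling direction together with the variational characterization (the minimizer sits at a constrained critical point whose index is known a priori), so that the index count closes \emph{without} needing full non-degeneracy. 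Once the sign of $\gamma$ is computed, the equivalence with the stated range $1<p\leq 1+\frac{4(1-a)}{d}$ follows by comparing $\gamma\geq 0$ (stable) against $\gamma<0$ (one real positive eigenvalue, hence unstable).
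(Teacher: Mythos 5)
Your proposal follows essentially the same route as the paper: nonnegativity of $\cl_-$ with kernel spanned by $\phi$, Morse index exactly one for $\cl_+$ inherited from the constrained (Weinstein) minimization, the Hamiltonian index-counting theorem (the paper uses the Lin--Zeng version), and the Vakhitov--Kolokolov quantity $\partial_\om\|\phi_\om\|_{L^2}^2$ computed from the explicit scaling $\phi_\om(x)=\om^{1/(p-1)}\phi_1(\om^{1/(2(1-a))}x)$ --- including your planned workaround for the unproven non-degeneracy of $\cl_+$, which the paper implements by showing that any $\psi\in Ker(\cl_+)$ contributes only a positive diagonal entry $\langle\cl_-^{-1}\psi,\psi\rangle>0$ to the matrix $\cd$ and hence does not affect the count. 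The only notable divergences are that you propose to get $\cl_-\geq 0$ and the simplicity of its kernel from a Perron--Frobenius/ground-state argument (which, as you yourself flag, is delicate for the degenerate coefficient $|x|^{2a}$), whereas the paper derives both directly from the second-variation inequality of the minimizer together with a quantitative Cauchy--Schwarz remainder, bypassing degenerate elliptic theory entirely; and the $\om$-dependence of $\|\phi_\om\|_{L^2}^2$ follows from pure scaling, so the Pohozaev identities are not actually needed at that step.
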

Finally, we discuss the strong instability of the spectrally unstable 
waves. The proper definition is as follows. 
\begin{defn}
	\label{defi:18}
	We say that a standing wave $e^{i\omega t}\phi_{\omega}(x)$ that
	solves \eqref{14},  is strongly unstable in the Banach space $X$,  if for
	any $\epsilon>0$,  there exist $u_{0}\in X$ such that $\|u_{0}-\phi_{\omega}\|_{X}<\epsilon$
	and the solution $u(t)$ of \ref{14} with initial data $u(0)=u_{0}$
	blows-up in finite time. 
\end{defn}

Specifically, we have 
\begin{theorem}
	\label{theo:30} 
	Let $d\geq 1$, $p>1+\frac{4(1-a)}{d}$, but $p$ satisfies \eqref{527}, so that the waves $\phi$ exist.  Then, the waves $e^{i \om t} \phi$ are unstable by blow-up in the space $H^{1,a}(\rd)$. 
\end{theorem}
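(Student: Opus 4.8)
The plan is to adapt the Berestycki--Cazenave concavity argument, using a weighted virial functional tailored to the degenerate operator $L:=\nabla\cdot(|x|^{2a}\nabla)$. First I would introduce $V(t)=\int_{\rd}|x|^{2(1-a)}|u(t,x)|^2\,dx$. The decisive algebraic fact is that the weight $w(x)=|x|^{2(1-a)}$ is adapted to $L$: one checks $|x|^{2a}\nabla w=2(1-a)x$, and on the level of the dilation generator $D_0=x\cdot\nabla$ one has the commutator identity $[D_0,L]=-2(1-a)L$. Differentiating $V$ twice, using \eqref{14} and this commutator, I expect the clean virial identity
\begin{equation*}
V''(t)=8(1-a)\,Q(u(t)),\qquad Q(u):=(1-a)\norm{|x|^a\nabla u}^2-\f{d(p-1)}{2(p+1)}\int|u|^{p+1}\,dx.
\end{equation*}
Observe that $Q(u)=\p_\la E(u^\la)\big|_{\la=1}$ for the mass-preserving scaling $u^\la(x)=\la^{d/2}u(\la x)$, and that the Pohozaev identity \eqref{500} says precisely $Q(\phi)=0$. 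Since $\phi$ decays exponentially by \eqref{18}, $V(0)<\infty$, and since $\phi$ is real, $V'(0)=0$.

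Next I would record the energy profile along the scaling family. With $g(\la):=E(\phi^\la)=\f12\la^{2(1-a)}\norm{|x|^a\nabla\phi}^2-\f{1}{p+1}\la^{d(p-1)/2}\int\phi^{p+1}$, the identities \eqref{500}--\eqref{510} give $g'(1)=Q(\phi)=0$ and, after substituting \eqref{500},
\begin{equation*}
g''(1)=\f{d(p-1)}{2(p+1)}\Big(2(1-a)-\f{d(p-1)}{2}\Big)\int\phi^{p+1}\,dx<0,
\end{equation*}
the sign being negative exactly because $p>1+\f{4(1-a)}{d}$. Hence $\la=1$ is a strict local maximum of $g$, and for $\la>1$ near $1$ the data $u_0:=\phi^\la$ satisfies $M(u_0)=M(\phi)$, $E(u_0)<E(\phi)$, and $Q(u_0)=\la\,g'(\la)<0$, while $\norm{u_0-\phi}_{H^{1,a}}\to0$ as $\la\to1^{+}$. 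This yields, for every $\ve>0$, an admissible nearby datum as required by Definition \ref{defi:18}.

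The heart of the matter is to trap the flow in $\{Q<0\}$ with a \emph{uniform} gap. Here I would invoke the variational characterization of the ground state furnished by the construction of Theorem \ref{theo:10}, namely $S_\om(\phi)=I:=\inf\{S_\om(u):u\in H^{1,a}(\rd)\setminus\{0\},\ Q(u)=0\}$, where $S_\om(u):=E(u)+\f{\om}{2}M(u)$. Since $S_\om$ is conserved and $S_\om(u_0)<S_\om(\phi)=I$, a continuity argument gives $Q(u(t))<0$ throughout the maximal existence interval: if $Q(u(t_*))=0$ at a first time $t_*$, then $u(t_*)\neq0$ is a competitor, forcing $S_\om(u(t_*))\ge I$, contradicting $S_\om(u(t_*))=S_\om(u_0)<I$. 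To upgrade this to $Q(u(t))\le-\de<0$, I would argue by contradiction along a sequence $t_n$ with $Q(u(t_n))\to0^{-}$: writing $Q=\f{d(p-1)}{2}E-\be\norm{|x|^a\nabla u}^2$ with $\be=\f{d(p-1)}{4}-(1-a)>0$, conservation of $E$ forces $\norm{|x|^a\nabla u(t_n)}$ and $\int|u(t_n)|^{p+1}$ to converge to finite positive limits; rescaling each $u(t_n)$ back to $\{Q=0\}$ by a factor $\la_n\to1$ and using continuity of $S_\om$ under scaling produces competitors with $S_\om\to S_\om(u_0)<I$, contradicting the definition of $I$.

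Combining the uniform bound with the virial identity gives $V''(t)\le-8(1-a)\de<0$, so together with $V(t)\ge0$ and $V'(0)=0$ the concavity estimate $V(t)\le V(0)-4(1-a)\de\,t^2$ forces the maximal existence time to be finite; by the blow-up alternative $\norm{u(t)}_{H^{1,a}}\to\infty$, which is exactly strong instability in $H^{1,a}(\rd)$. I expect two steps to demand the most care. The first is the sharp variational identity $S_\om(\phi)=\inf\{S_\om:Q=0\}$: although $Q(\phi)=0$ holds by Pohozaev, identifying the constructed minimizer with the minimizer over the scaling (Pohozaev) constraint is the genuine variational input and must be extracted from the Weinstein-type construction of Theorem \ref{theo:10}. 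The second is the rigorous justification of the virial identity and the finiteness and propagation of $V(t)$ in this degenerate, possibly low-regularity setting where a full $H^{1,a}$ well-posedness theory is not yet available; I would address this either through a finite-variance local theory or, failing that, through a localized (truncated-weight) virial estimate in the spirit of Ogawa--Tsutsumi, absorbing the weight-commutator error terms by means of the exponential decay \eqref{18} and the a priori bounds from conservation of mass and energy.
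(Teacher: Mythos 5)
Your overall architecture coincides with the paper's: the same weighted virial quantity $\||x|^{1-a}u(t)\|_{L^2}^2$, the same virial identity (your $Q$ is twice the paper's functional $P$ in \eqref{620}--\eqref{630}), the same $L^2$-invariant scaling data $\phi^\lambda=\lambda^{d/2}\phi(\lambda\cdot)$ with $\lambda>1$ near $1$, and the same computation via Pohozaev that $E(\phi^\lambda)<E(\phi)$ precisely when $p>1+\frac{4(1-a)}{d}$. Your algebra for $g'(1)=0$, $g''(1)<0$ and $Q(\phi^\lambda)=\lambda g'(\lambda)<0$ is correct. Where you diverge is in the trapping step, and this is where your proposal has a genuine gap that you yourself flag: you invoke the characterization $S_\omega(\phi)=\inf\{S_\omega(u):Q(u)=0\}$ of the ground state as a minimizer over the Pohozaev manifold. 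This is not established anywhere in the paper, and it does not follow immediately from the Weinstein construction of Theorem \ref{theo:10}; proving it would be a separate (standard but nontrivial) variational argument. Without it, your continuity argument keeping $Q(u(t))<0$, and a fortiori your compactness argument for the uniform gap $Q(u(t))\le-\delta$, have no foundation.

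The paper fills exactly this slot with a more elementary device that you should compare against. Instead of a manifold characterization, it shows directly that $\|u_\lambda(t)\|_{L^{p+1}}^{p+1}>\|\phi\|_{L^{p+1}}^{p+1}$ for all $t>0$: if equality held at some first time $t_0$, then mass conservation gives $\|u_\lambda(t_0)\|_{L^2}=\|\phi\|_{L^2}$, and the fact that $\phi$ minimizes the Weinstein quotient \eqref{150} forces $\int|x|^{2a}|\nabla\phi|^2\le\int|x|^{2a}|\nabla u_\lambda(t_0)|^2$, hence $E(\phi)\le E(u_\lambda(t_0))=E(\phi^\lambda)$, contradicting $E(\phi^\lambda)<E(\phi)$. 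Feeding this strict $L^{p+1}$ lower bound into energy conservation, together with $\alpha-2(1-a)>0$ and \eqref{500}, yields the \emph{uniform} bound $P(u_\lambda(t))\le(1-a)\bigl(E(\phi^\lambda)-E(\phi)\bigr)<0$ in one stroke, so your separate compactness argument for the gap is unnecessary. Your second flagged concern --- rigorous justification of the virial identity and finiteness of $V(t)$ in the absence of a full well-posedness theory --- is legitimate, but the paper operates at the same formal level (it assumes a global solution for contradiction and computes the virial identity formally), so this is not a point on which your proposal falls short of the paper. In summary: your plan is sound in outline, but to complete it along your route you would need to prove the Pohozaev-manifold minimality of $\phi$, whereas the paper's Step 2 shows this can be bypassed entirely using only the Weinstein minimization property and mass conservation.
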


\section{Preliminaries}
We start with some notations and function spaces. 
\subsection{Function spaces} 
We will use the standard definition of $L^p$ spaces,
$$
\|f\|_{L^p}=\left(\int_{\rd} |f(x)|^p dx\right), 1\leq p<\infty
$$
with the obvious modification for $p=\infty$. Recall the classical Riesz-Kolmogorov criteria for compactness in $L^q(\rd)$, which states that a set ${\mathcal F}\subset L^q(\rd), 1<q<\infty$ is compact if and only if 
\begin{enumerate}
	\item $\sup_{f\in {\mathcal F}}\|f\|_{L^q}<\infty$ 
	\item For every $\eps>0$, there exists $R$, so that 
	$$
\sup_{f\in {\mathcal F} }	\int_{|x|>R} |f(x)|^q dx <\eps. 
	$$
	\item For every $\eps>0$, there exists $\de>0$, so that for all $|y|<\de$, 
	$$
	\sup_{f\in {\mathcal F} }	\int_{\rd} |f(x+y)-f(x)|^q  dx <\eps. 
		$$
\end{enumerate}
Next, the Fourier transform and its inverse are defined via 
$$
\hat{f}(\xi)=\int_{\rd} f(x) e^{-2\pi i \dpr{x}{\xi}} dx, \ \ f(x) = \int_{\rd} \hat{f}(\xi)  e^{2\pi i \dpr{x}{\xi}} d\xi, 
$$
In this setting, we see that for Schwartz functions $f$, we can realize $\widehat{-\De f}(\xi)=4\pi^2 |\xi|^2 \hat{f}(\xi)$. More generally, for any $s>0$,  the fractional Laplacian of order $s$, can be introduced via $\widehat{(-\De)^s f}(\xi)=(4\pi^2)^s  |\xi|^{2s}  \hat{f}(\xi)$.  

Next, for any $\si>0$, we can define the operator $(\si-\De)^{-1}$, defined via the Fourier transform 
$
\widehat{(\si-\De)^{-1} f}(\xi)=(\si+4\pi^2 |\xi|^2)^{-1} \hat{f}(\xi).
$
It is well-known that this operator is given by a convolution with the  Green's function $\si^{\f{d}{2}-1} Q(\sqrt{\si} x)$, which has the asymptotics 
\begin{equation}
	\label{q:10} 
	Q(x)\sim \begin{cases}
		Ce^{-|x|} & |x|>1, d=1.\\
		C|x|^{2-d} & |x|<1,\ d\geq3\\
		\ln(\frac{1}{|x|}) & |x|<1,\ d=2. 
	\end{cases}.
\end{equation}
The Sobolev spaces $W^{s,p}, 1<p<\infty, s>0$ can be taken as the completion of the set of Schwartz functions in the norm 
$$
\|f\|_{W^{s,p}}=\|(-\De)^{\f{s}{2}} f\|_{L^p}+ \|f\|_{L^p}.
$$
It is convenient to introduce the notation $D=\sqrt{-\De}$ as the symbold of order $1$. Clearly, by the definitions above, 
$\|D f\|_{L^p}\sim \|\nabla f\|_{L^p}, 1<p<\infty$.

We also need a standard product estimate on Sobolev spaces. Specifically, let $1<p,q_1,r_1, q_2, r_2<\infty$, $\f{1}{p}=\f{1}{q_1}+\f{1}{r_1}=\f{1}{q_2}+\f{1}{r_2}$ and $0<s<1$, then there exists $C$, so that  
\begin{equation}
	\label{KP} 
	\|D^s( f g)\|_{L^p}\leq C(\|D^s f\|_{L^{q_1}} \|g\|_{L^{r_1}}+\| f\|_{L^{q_2}} \|D^s g\|_{L^{r_2}})
\end{equation}
Another useful result, is the diamagnetic inequality\footnote{The actual diamagnetic unequality allows for more general, so called  magnetic gradients in the form $\nabla_A=\{(\p_j+i A_j)\}_{j=1}^d$ instead of $\nabla$}, which states that for every Schwartz function $f$, there is the point-wise bound a.e. in $x$, 
\begin{equation}
	\label{210} 
	|\nabla |f|(x)|\leq |\nabla f(x)|.
\end{equation}
In particular, if $f\in H^1(\rd)$, then so is $|f|$ and $\| \nabla |f| \|\leq \| \nabla f\|$. 
An important fact, that will be useful in the sequel, is the Caffarelli-Kohn-Nirenberg  inequality, which we discuss next. 
\subsection{Caffarelli-Kohn-Nirenberg inequality}
The following theorem appears in the celebrated work \cite{CKN}. 
\begin{theorem}
	Let $p,q,r,a,b,\ga,\sigma$ and $\theta$
	be real numbers that satisfy, $1\leq p,q$, $0<r$, $0\leq\theta\leq1,$ and
	$$
	\frac{1}{p}+\frac{a}{d}>0, \ \frac{1}{q}+\frac{b}{d}>0, \ \frac{1}{r}+\frac{c}{d}>0\text{ and }c=\theta\sigma+(1-\theta)b,
	$$
	then there exist positive constant $C>0$ such that for all Schwartz functions $u:\rd\to \rone$, there is 
	\begin{equation}
		\label{10} 
		\||x|^{c}u\|_{L^{r}}\leq C\||x|^{a}\nabla u\|_{L^{p}}^{\theta}\||x|^{b}u\|_{L^{q}}^{(1-\theta)},
	\end{equation}
	if and only if the following relation holds,
	\begin{equation}
		\label{20} 
		\begin{cases}
			& 	\frac{1}{r}+\frac{c}{d}=\theta\left(\frac{1}{p}+\frac{a-1}{d}\right)+(1-\theta)\left(\frac{1}{q}+\frac{b}{d}\right) \\
			&  0\leq a-\si  \ \ \textup{if} \ \ \theta>0 \ \ \\
			& a-\si \leq 1, \ \ \textup{if} \ \ \theta>0, \ \ \frac{1}{r}+\frac{c}{d}=\frac{1}{p}+\frac{a-1}{d}. 
		\end{cases}
	\end{equation}	
	In particular, for 
	\begin{equation}
		\label{25} 
		0\leq a<1, \  \ 0\leq\theta\leq1,\ \  \frac{1}{q}=\frac{1}{2}-\frac{\theta}{d}\left(1-a\right), 
	\end{equation}
	\begin{equation}
		\label{30} 
		\left(\int_{\mathbb {R}^{d}}|u|^{q}dx\right)^{1/q}\leq C\left(\int_{\mathbb {R}^{d}}|\nabla u|^{2}|x|^{2a}dx\right)^{\theta/2}\left(\int_{\mathbb {R}^{d}}|u|^{2}dx\right)^{(1-\theta)/2}. 
	\end{equation}
\end{theorem}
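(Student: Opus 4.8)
The plan is to treat the two implications separately, deriving the constraints \eqref{20} from scaling and trial functions, and then establishing \eqref{10} by interpolating between the endpoint $\theta=1$ (a weighted Sobolev inequality) and the trivial endpoint $\theta=0$. For necessity, the dimensional identity in the first line of \eqref{20} is forced by scale invariance: replacing $u$ by $u(\la\,\cdot)$ multiplies each weighted norm by an explicit power of $\la$, since $\norm{|x|^c u(\la\,\cdot)}_{\lr}=\la^{-c-d/r}\norm{|x|^c u}_{\lr}$, while the gradient factor acquires an extra power $\la$ from the chain rule and the $\lqq$ factor transforms analogously. For \eqref{10} to hold with a $\la$-independent constant the powers of $\la$ on the two sides must balance, and this balance is precisely the first equation of \eqref{20}. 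The sign conditions $0\le a-\si$ (and, in the borderline case $\f{1}{r}+\f{c}{d}=\f{1}{p}+\f{a-1}{d}$, also $a-\si\le1$) I would obtain by testing against suitably truncated power profiles $|x|^{-\kappa}$ concentrated near the origin or near infinity, which render the right-hand side finite while forcing the left-hand side to diverge whenever these inequalities fail.

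For sufficiency, the key idea is to reduce the general $\theta\in(0,1)$ to the endpoint $\theta=1$ by a weighted Hölder interpolation. Let $r_1$ be the exponent determined by the $\theta=1$ balance $\f{1}{r_1}+\f{\si}{d}=\f{1}{p}+\f{a-1}{d}$. A direct computation using the dimensional identity of \eqref{20} shows that $\f{1}{r}=\f{\theta}{r_1}+\f{1-\theta}{q}$, and since $c=\theta\si+(1-\theta)b$ one has the pointwise factorization $|x|^c|u|=(|x|^\si|u|)^{\theta}(|x|^b|u|)^{1-\theta}$. Hölder's inequality with the conjugate exponents $\f{r_1}{\theta r}$ and $\f{q}{(1-\theta)r}$ then yields
\begin{equation*}
\norm{|x|^c u}_{\lr}\le \norm{|x|^\si u}_{L^{r_1}}^{\theta}\,\norm{|x|^b u}_{\lqq}^{1-\theta}.
\end{equation*}
Since the second factor is already one of the quantities on the right of \eqref{10}, everything reduces to the endpoint weighted Sobolev inequality $\norm{|x|^\si u}_{L^{r_1}}\le C\norm{|x|^a\nabla u}_{\lp}$.

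This endpoint inequality I would prove via the Riesz-potential representation of $u$. For a Schwartz function there is the pointwise bound $|u(x)|\le C\int_{\rd}|x-y|^{1-d}|\nabla u(y)|\,dy=C\,(I_1|\nabla u|)(x)$, where $I_1$ denotes the Riesz potential of order one. The endpoint estimate then becomes $\norm{|x|^\si I_1 g}_{L^{r_1}}\le C\norm{|x|^a g}_{\lp}$ with $g=|\nabla u|$, which is exactly the Stein--Weiss inequality for fractional integration; its admissibility conditions are the dimensional relation for $r_1$ above together with $a-\si\ge0$ and $a-\si\le1$ (the latter being the requirement that the order $1$ of the potential exceed the total weight $a-\si$). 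I expect the Stein--Weiss estimate, and in particular the careful tracking of the admissible parameter range together with the borderline cases, to be the main obstacle; the remaining steps are scaling and Hölder bookkeeping.

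Finally, the special case \eqref{25}--\eqref{30} follows by specializing the parameters of \eqref{10}: take the gradient-factor exponent $p=2$, the lower-order exponent $q=2$ and $b=c=0$ (so that $\si=0$ when $\theta>0$), in which case the free exponent $r$ of \eqref{10} plays the role of the exponent denoted $q$ in \eqref{30}. The dimensional identity then collapses to $\f{1}{q}=\f{1}{2}-\f{\theta}{d}(1-a)$, which is \eqref{25}, and the sign constraints reduce to $0\le a<1$; this yields \eqref{30}.
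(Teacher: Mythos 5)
The paper does not actually prove this theorem: it is quoted verbatim from the cited work of Caffarelli--Kohn--Nirenberg \cite{CKN}, so there is no in-paper argument to compare yours against. Judged on its own, your strategy (scaling for the dimensional balance, trial functions for the sign conditions, and for sufficiency a pointwise factorization $|x|^c|u|=(|x|^\si|u|)^\theta(|x|^b|u|)^{1-\theta}$ plus H\"older, reducing to the $\theta=1$ endpoint, which you then attack via $|u|\le CI_1(|\nabla u|)$ and Stein--Weiss) is the natural one, and the bookkeeping identity $\f{1}{r}=\f{\theta}{r_1}+\f{1-\theta}{q}$ you claim does follow from the first line of \eqref{20}.

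There is, however, a genuine gap in the sufficiency direction. Your reduction needs the endpoint exponent $r_1$ defined by $\f{1}{r_1}=\f{1}{p}+\f{a-1-\si}{d}$ to be a legitimate Lebesgue exponent, i.e.\ $\f{1}{p}+\f{a-1-\si}{d}>0$. This fails on a substantial part of the parameter range covered by the theorem and actually used in the paper: in the special case \eqref{25}--\eqref{30} one has $p=2$, $\si=0$, and $\f{1}{r_1}=\f12-\f{1-a}{d}$, which is $\le 0$ whenever $d\le 2(1-a)$ (e.g.\ $d=1$, $a<\f12$, including the classical Gagliardo--Nirenberg case $d=1$, $a=0$). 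In that regime there is no $L^{r_1}$ endpoint to interpolate with, and the H\"older factorization collapses; one must instead interpolate against an $L^\infty$/H\"older-type endpoint or, as in \cite{CKN}, decompose $\rd$ into dyadic annuli, apply the unweighted Gagliardo--Nirenberg inequality on each annulus where the weights are comparable to constants, and resum. Separately, even when $r_1>0$, the Stein--Weiss inequality you invoke carries the additional hypotheses $p\le r_1$ and $a<d/p'$ (equivalently $\f{1}{p}+\f{a}{d}<1$), neither of which is implied by the stated assumptions, so the borderline cases of the ``if and only if'' claim are not actually covered by your sketch. Since the result is classical and the paper only needs the quoted special case, the cleanest fix is to cite \cite{CKN}; if you want a self-contained proof of \eqref{30}, the dyadic-annulus argument is the one that works uniformly in $d\ge1$ and $a\in[0,1)$.
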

{\bf Remarks:} 
\begin{itemize}
	\item The equality relation in  \eqref{20} is nothing but a dimensionality condition, which is clearly necessary for \eqref{10} to hold true. 
	\item In the particular case of $a=0$, we obtain the Gagliardo-Nirenberg inequality. 
\end{itemize}
We introduce the Hilbert space of functions $H^{1,a}, 0\leq a<1$ as follows 
\begin{equation}
	\label{h1a} 
	\|u\|_{H^{1,a}(\rd)}:=\left(\int_{\rd}(|\nabla u|^{2}|x|^{2a}+|u|^{2})\ dx\right)^{1/2}.
\end{equation}

\subsection{Necessary and sufficient conditions for stability of the eigenvalue problem \eqref{l:20}}
\label{sec:2.4}

We now  give a necessary and suffcient condition regarding the spectral stability of the waves $e^{i \om t} \phi$. One is interested in the eigenvalue problem \eqref{l:20} and specifically in the presence of non-trivial solutions $(\la, \vec{v}), \Re \la>0, v\neq 0$. The problem has been well-studied over the last thirty years, and there is a well-developed theory, which culminated in the work \cite{LZ}, but it was preceeded by some earlier works.  Below, we mostly follow the presentation in \cite{LZ}. 

For eigenvalue problems in the form 
\begin{equation}
	\label{k:10} 
	\ci \ck f=\la f, 
\end{equation}
we assume that $\ck: \cx\to \cx^*$ is a bounded operator for appropriate Hilbert space $\cx$, while $\ck=\ck^*$ for a Hilbert space $H: \cx\subset H\subset \cx^*$. Assume also that $\ck$ has finitely many negative eigenvalues. That is, its Morse index, denoted $n(\ck)$ is finite. Regarding the operator $\ci$, we can assume in our case that it is a bounded skew-symmetric operator,$\ci^*=-\ci$. Introduce the integers $k_r$ to be the number of positive eigenvalues of \eqref{k:10}, counting algebraic multiplicities, while $k_c$ is the number of quadruplets of eigenvalues of \eqref{k:10}, while finally $k_i^-$ be the number of purely imaginary eigenvalues with negative Krein signatures. Recall that a pair of imaginary e-values, $\pm i \mu$ (with e-vector $\vec{z}: \ci \ck \vec{z}= i \mu \vec{z}$), has a negative Krein signature, if 
$\dpr{\ck \vec{z}}{\vec{z}}<0$. 

Next,  the generalized kernel is defined in a standard way via
$$
gKer(\mathcal{I}\mathcal{K})=span[(Ker[(\mathcal{I}\mathcal{K})^l], l=1, 2, \ldots]  
$$
Clearly $Ker(\ck)\subset gKer(\mathcal{I}\mathcal{K})$. Assume further that there are finally many vectors $\{\eta_j\}_{j=1}^N$, so that they complement $Ker(\ck)$ to $gKer(\mathcal{I}\mathcal{K})$, i.e. 
$$
gKer(\mathcal{I}\mathcal{K})=span\{\eta_j\}_{j=1}^N \oplus Ker(\ck)
$$ 
Introduce a symmetric matrix $\cd\in \cm_{N\times N}$, $\cd_{i j}:=\dpr{\ck \eta_i}{\eta_j}$. With these preparations, we can state the Hamiltonian index counting theorem, \cite{LZ}, 
 \begin{equation}
	\label{k:20}
	k_{Ham}:=k_r+2 k_c+2k_i^-=n(\ck)-n_0(\cd),
\end{equation}
 where $n_0(\cd)=\#\{\la\leq 0: \la\in \si(\cd)\}$ is the number of non-positive eigenvalues of $\cd$. An easy consequence of parity considerations is that if $n(\ck)=1$, then stability is equivalent to $n_0(\cd)=1$, while instability is the same as $n_0(\cd)=0$. 
  
\section{Variational construction}
We need a preparatory result. 
\subsection{The compactness of the embedding $H^{1,a}(\rd)\hookrightarrow L^q(\rd)$}
There are many weighted Sobolev embedding available in the literature, see for example \cite{SW}. In addition however, we shall need a compactness result for this embedding, which we prefer to state precisely as follows.  
\begin{proposition}
	\label{prop:10} 
	Let $d\geq 1$, and $a\in (0,1), \theta\in (0,1)$, so that $q$ satisfies \eqref{25}. Then the embedding $H^{1,a}(\rd)\hookrightarrow L^q(\rd)$ is compact. 
\end{proposition}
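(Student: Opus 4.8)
The continuity of the embedding is essentially free: the inequality \eqref{30} together with the elementary bound $A^\theta B^{1-\theta}\le\max(A,B)\le\|u\|_{H^{1,a}}$ (applied with $A=\||x|^a\nabla u\|_{L^2}$, $B=\|u\|_{L^2}$) readily gives $\|u\|_{L^q}\le C\|u\|_{H^{1,a}}$. For compactness, the plan is to take an arbitrary sequence $\{u_n\}$ bounded in $H^{1,a}(\rd)$, say $\|u_n\|_{H^{1,a}}\le M$, extract a weakly convergent subsequence $u_n\rightharpoonup u$ (the space is Hilbert, and $\|u\|_{H^{1,a}}\le M$ by weak lower semicontinuity), and upgrade this to strong convergence in $L^q(\rd)$. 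I would do so by splitting $\rd$ into a neighborhood $\{|x|<\ve\}$ of the origin, an exterior region $\{|x|>R\}$, and the intermediate annulus, and controlling each separately.

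On any annulus $\{\ve\le|x|\le R\}$ the weight $|x|^{2a}$ is bounded above and below by positive constants, so there the $H^{1,a}$ norm is equivalent to the ordinary $H^1$ norm. Since \eqref{25} with $\theta<1$, $a<1$ forces $\frac1q=\frac12-\frac{\theta(1-a)}{d}>\frac12-\frac1d$, the exponent $q$ is strictly below the critical Sobolev exponent ($q<\frac{2d}{d-2}$ for $d\ge3$, and any finite $q$ for $d\le2$), so the classical Rellich--Kondrachov theorem gives compactness of $H^1\hookrightarrow L^q$ on the annulus. Passing to a diagonal subsequence along an exhaustion by annuli, I obtain $u_n\to u$ in $L^q_\loc(\rd\setminus\{0\})$.

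It then remains to show the $L^q$ mass is uniformly small near $\{0\}$ and near infinity, for $u_n$ and $u$ alike. The exterior is the easy direction, since there $|x|^{2a}\ge1$: multiplying $u_n$ by a cutoff supported in $\{|x|>R\}$ and applying the Gagliardo--Nirenberg inequality (the $a=0$ case of \eqref{30}, with exponent $\theta'=\theta(1-a)\in(0,1)$) yields $\int_{|x|>2R}|u_n|^q\le CR^{-a\theta'q}M^q\to0$ uniformly in $n$. The subtle region --- and the main obstacle --- is the origin, where the weight degenerates: neither Rellich nor a naive cutoff works, since cutting off at scale $\ve$ produces a term $\sim\ve^{2a}\ve^{-2}=\ve^{2a-2}$ from $|x|^{2a}|\nabla\chi|^2$, which blows up as $\ve\to0$. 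The resolution, and the one genuinely new ingredient, is to exploit the strict subcriticality directly through the homogeneous ($\theta=1$) CKN inequality $\|u\|_{L^{q_c}}\le C\||x|^a\nabla u\|_{L^2}$, where $\frac1{q_c}=\frac12-\frac{1-a}{d}$, combined with H\"older's inequality on the ball $B_\ve$: because $q<q_c$,
$$\int_{|x|<\ve}|u_n|^q\le|B_\ve|^{1-q/q_c}\Big(\int_{\rd}|u_n|^{q_c}\Big)^{q/q_c}\le C\ve^{d(1-q/q_c)}\||x|^a\nabla u_n\|_{L^2}^q\le C\ve^{d(1-q/q_c)}M^q,$$
and the positive power of $\ve$ furnishes the required uniform smallness. (In the low-dimensional exceptional range $d=1$, $a\le1/2$, where $q_c=\infty$, I would instead use the direct Morrey-type bound $|u_n(x)-u_n(0)|\le C|x|^{(1-2a)/2}\||x|^a u_n'\|_{L^2}$ to control $\|u_n\|_{L^\infty}$ and conclude $\int_{|x|<\ve}|u_n|^q\lesssim\ve M^q$.)

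Finally I would assemble the pieces: given $\eta>0$, fix $\ve$ small and $R$ large so that the origin and exterior contributions of both $u_n$ and $u$ are each below $\eta$ for all $n$; on the fixed annulus $\{\ve\le|x|\le R\}$ strong convergence holds by the Rellich step, so $\limsup_n\|u_n-u\|_{L^q(\rd)}^q\lesssim\eta$. Since $\eta>0$ is arbitrary, $u_n\to u$ in $L^q(\rd)$, which proves the asserted compactness. I expect the degeneracy of the weight at the origin to be the only real difficulty: every other region reduces either to the unweighted theory (on annuli) or to a favorable-weight estimate (in the exterior), and the concentration of $L^q$ mass at $x=0$ is precisely what the subcriticality gap $q<q_c$ in the homogeneous CKN inequality rules out.
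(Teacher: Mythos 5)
Your argument is correct, but it follows a genuinely different route from the paper. The paper verifies the Riesz--Kolmogorov criterion directly: the uniform bound and the tail condition are handled as you do (cutoff at infinity, where the weight $|x|^{2a}\geq R^{2a}$ is favorable, plus Gagliardo--Nirenberg), but the equicontinuity-of-translations condition is the heavy step there --- it is reduced to an embedding $H^{1,a}\hookrightarrow W^{s,p}_{\loc}$ for small $s$ and $p$ close to $1$ (Lemma \ref{le:app}), proved via a dyadic Littlewood--Paley-type decomposition around the origin and the Kato--Ponce product estimate \eqref{KP}. You bypass all of that: on annuli $\{\ve\leq|x|\leq R\}$ the weight is non-degenerate, so classical Rellich--Kondrachov applies (using that \eqref{25} with $\theta<1$, $a>0$ forces $q$ strictly below the ordinary critical exponent), and the possible concentration at the origin is excluded by H\"older plus a strictly better integrability exponent. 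This is more elementary and, in my view, cleaner. Two small remarks. First, you do not actually need the endpoint $\theta=1$ CKN inequality near the origin: since $\theta<1$, any $\theta_1\in(\theta,1)$ gives via \eqref{25} an exponent $q_1\in(q,q_c)$ with $\|u\|_{L^{q_1}}\leq C\|u\|_{H^{1,a}}$, and H\"older on $B_\ve$ then yields $\int_{|x|<\ve}|u_n|^q\leq C\ve^{d(1-q/q_1)}M^q$; this uniformly covers all dimensions and all $a\in(0,1)$ and removes the need for your exceptional-case discussion. Second, as written your exceptional case has a genuine (if minor) gap at exactly $d=1$, $a=1/2$: there $q_c=\infty$ but the endpoint embedding fails (logarithmic blow-up is possible), and your Morrey bound $|u(x)-u(0)|\leq C|x|^{(1-2a)/2}\||x|^au'\|_{L^2}$ degenerates since $\int_0^x t^{-2a}\,dt$ diverges at $a=1/2$. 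The intermediate-exponent fix above repairs this automatically. With that adjustment the proof is complete and constitutes a valid alternative to the paper's argument.
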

We provide an independent and direct  proof of this result, for the readers convenience, in the appendix. 
We are now ready to proceed with the existence proof for the waves. 
\subsection{Existence of the waves}
Introduce the Weinstein functional 
\begin{equation}
	\label{150}
	J[u]=\frac{\int|\nabla u|^{2}|x|^{2a}dx+\int|u|^{2}dx}{\|u\|_{L^{p+1}}^{2}}
\end{equation}
Note that the Weinstein's functional is homogeneous of degree zero, i.e. $J[m u]=J[u]$ for all $m\neq 0$. 
If the parameters $a, \theta, q$ satisfy the relation \eqref{25}, the Caffarelli-Kohn-Nirenberg's inequality \eqref{30} guarantees that $J[u]\geq C>0$. It follows that the variational problem
\begin{equation}
	\label{160}
	\min_{u:\rd\to\cc, u\neq 0} J[u]
\end{equation}
is well-posed.  We explicitly note that in the variational  problem \eqref{160}, the function $u$ is complex valued.  In the sequel, we show that \eqref{160} has a solution, that is a positive  function $\Phi$, so that $J[\Phi]=\min_{u\neq 0} J[u]$. 
\begin{proposition}
	\label{prop:45} 
	Let $d\geq 1$ and $a, \theta, q=p+1 $ satisfy \eqref{25}. That is, 
	$$
	0\leq a<1, \theta\in (0,1), \f{1}{p+1} = \f{1}{2}  - \f{\theta}{d} (1-a)
	$$
	Then, the variational problem \eqref{160} has a solution $\Phi$. Moreover, $\Phi\geq 0$ and it satisfies the Euler-Lagrange equation 
	\begin{equation}
		\label{170} 
			-\nabla\cdot(|x|^{2a}\nabla\Phi)+\Phi-\ka \Phi^p=0.
	\end{equation}
for some Lagrange multiplier $\ka>0$. 
\end{proposition}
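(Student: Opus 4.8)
The plan is to run the direct method of the calculus of variations on the Weinstein functional $J$, exploiting the compactness of the embedding $H^{1,a}(\rd)\hookrightarrow L^q(\rd)$ from Proposition \ref{prop:10}. Since $J$ is homogeneous of degree zero, I would first reduce the minimization \eqref{160} to a constrained problem: set $m:=\inf_{u\neq 0} J[u]>0$ (positivity comes from the Caffarelli-Kohn-Nirenberg inequality \eqref{30}), pick a minimizing sequence $u_n$, and normalize by scaling so that $\|u_n\|_{L^{p+1}}=1$ for all $n$. Then $J[u_n]=\int|\nabla u_n|^2|x|^{2a}\,dx+\int|u_n|^2\,dx=\|u_n\|_{H^{1,a}}^2\to m$, so the normalized sequence is bounded in $H^{1,a}(\rd)$.

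Next I would extract a weakly convergent subsequence $u_n\rightharpoonup \Phi$ in the Hilbert space $H^{1,a}(\rd)$, which exists by Banach-Alaoglu since $H^{1,a}$ is reflexive. The compact embedding $H^{1,a}\hookrightarrow L^{p+1}$ (here $q=p+1$) upgrades this to strong convergence $u_n\to\Phi$ in $L^{p+1}$, whence $\|\Phi\|_{L^{p+1}}=\lim\|u_n\|_{L^{p+1}}=1$; in particular $\Phi\neq 0$. By weak lower semicontinuity of the $H^{1,a}$-norm, $\|\Phi\|_{H^{1,a}}^2\leq\liminf\|u_n\|_{H^{1,a}}^2=m$, so $J[\Phi]=\|\Phi\|_{H^{1,a}}^2/\|\Phi\|_{L^{p+1}}^2\leq m$. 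Combined with $J[\Phi]\geq m$ by definition of the infimum, this forces $J[\Phi]=m$, so $\Phi$ is a minimizer.

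To obtain nonnegativity, I would observe that the diamagnetic inequality \eqref{210} gives $\||x|^a\nabla|\Phi|\|_{L^2}\leq\||x|^a\nabla\Phi\|_{L^2}$, while $\||\Phi|\|_{L^2}=\|\Phi\|_{L^2}$ and $\||\Phi|\|_{L^{p+1}}=\|\Phi\|_{L^{p+1}}$, so $J[|\Phi|]\leq J[\Phi]=m$. Hence $|\Phi|$ is also a minimizer, and replacing $\Phi$ by $|\Phi|$ I may assume $\Phi\geq 0$. Finally, since $\Phi$ minimizes the degree-zero functional $J$, it is a critical point: computing the first variation $\frac{d}{dt}\big|_{t=0}J[\Phi+t\psi]=0$ for all test functions $\psi$ yields, after clearing denominators, the weak form of
\[
-\nabla\cdot(|x|^{2a}\nabla\Phi)+\Phi=\ka\,\Phi^p,
\]
with $\ka=m\,\|\Phi\|_{L^{p+1}}^{1-p}>0$ arising as the proportionality constant between the numerator's and denominator's variations; this is exactly \eqref{170}.

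The main obstacle I anticipate is the passage from weak to strong convergence in $L^{p+1}$, which is precisely where noncompactness of $\rd$ usually defeats the direct method (minimizing sequences can escape to infinity or concentrate). Here that difficulty is entirely absorbed into Proposition \ref{prop:10}: the weight $|x|^{2a}$ with $a>0$ restores compactness of the embedding, so once that proposition is in hand the argument is routine. A minor secondary point is verifying that $|\Phi|$ remains in $H^{1,a}$ and that the scaling normalization is legitimate given the degree-zero homogeneity, but neither presents real difficulty.
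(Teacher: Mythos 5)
Your proposal is correct and follows essentially the same route as the paper: the direct method applied to the degree-zero functional $J$, with Proposition \ref{prop:10} supplying the compactness, the diamagnetic inequality \eqref{210} giving nonnegativity, and the first variation yielding \eqref{170} with $\ka>0$ (your normalization $\|u_n\|_{L^{p+1}}=1$ is the ``reverse route'' that the paper's footnote explicitly notes is equivalent to its choice $\|u_n\|_{H^{1,a}}=1$). The only substantive difference is that the paper expands $J[\Phi+\eps h]$ to second order in $\eps$, so that besides the Euler--Lagrange equation \eqref{320} it also records the second-variation inequality \eqref{310}; that inequality is not needed for the proposition as stated, but it is the key input for the spectral properties of $L_\pm$ in Proposition \ref{prop:56}, so a reader following your version would have to return and perform that computation later.
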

\begin{proof}
	We start with the remark that  by the diamagnetic inequality \eqref{210}, we have that $J[u]\geq J[|u|]$, whence one can consider the variational problem \eqref{160}  as constrained on the set of positive functions. 
	By the homogeneity of $J$, solving \eqref{160} is equivalent to solving the constrained maximization problem 
	$$
	\begin{cases}
		\|u\|_{L^{p+1}}^2\to \max \\
		\|u\|_{H^{1,a}}=1.
	\end{cases}
	$$
	It follows by the remark above that again, we can focus our attention to the case $u\geq 0$. 
	The supremum $Q=\sup_{\|u\|_{H^{1,a}}=1}  \|u\|_{L^{p+1}}^2$ is finite - in fact
	$$
	J_{\min}=\inf_{u\neq 0} \frac{\int|\nabla u|^{2}|x|^{2a}dx+\int|u|^{2}dx}{\|u\|_{L^{p+1}}^{2}}=\f{1}{Q}.
	$$
	so take a maximizing sequence, i.e. $u_n>0 : \|u_n\|_{H^{1,a}}=1, \lim_n \|u_n\|_{L^{p+1}}^2=Q$. Since by Proposition \ref{prop:10},  $H^{1,a}$ is compactly embeded into $L^{p+1}$, we can select a subsequence of $\{u_n\}$, denoted the same, which converges in $L^{p+1}$ to say $\Phi\geq 0$, and at the same time, by Banach-Alaoglu's theorem, it converges weakly in $H^{1,a}$ to the same function. By the lower semi-continuity of weak convergence, we have 
	$
	1=\liminf_n \|u_n\|_{H^{1,a}}\geq \|\Phi\|_{H^{1,a}}, 
	$
	while $Q=\lim_n \|u_n\|_{L^{p+1}}^2=\|\Phi\|_{L^{p+1}}^2$. Since 
	\begin{equation}
		\label{300} 
			\f{1}{Q} = J_{\min} \leq J[\Phi]=\f{\|\Phi\|_{H^{1,a}}^2}{\|\Phi\|_{L^{p+1}}^2} \leq \f{1}{Q}, 
	\end{equation}
	it follows that there are equalities everywhere in \eqref{300}, whence $\|\Phi\|_{H^{1,a}}=1$.  Thus, $\Phi$ is a minimizer for \eqref{160}. Incidentally, it is a standard result in functional analysis, that since  $u_n$ tends weakly in a Hilbert space to $\Phi$ and the norms converge, then it is the case that  $\lim_n \|u_n-\Phi\|_{H^{1,a}}=0$ as well. 
	
	We now pass to the derivation of the Euler-Lagrange equation and the spectral properties of $L_\pm$. We use the fact that $\Phi\geq 0$ is a minimizer of $J[u]$, whence  $m(\eps):=J[\Phi+\eps h]\geq J[\Phi]=m(0)$ for every $\eps\in\rone$ and a real-valued test function $h$. Introduce for simplicity $\la:=\int \Phi^{p+1}(x) dx$. 
	
	We compute,
	\begin{eqnarray*}
	& & 	\int|x|^{2a}|\nabla \Phi+\epsilon\nabla h|^{2}=\int|x|^{2a}|\nabla \Phi|^{2}+2\epsilon\int|x|^{2a}\nabla \Phi\cdot\nabla h+\epsilon^{2}\int|x|^{2a}|\nabla h|^{2}+\mathcal{O}(\epsilon^{3})\\ 
	& & 	\int|\Phi+\epsilon h|^{2}=\int|\Phi|^{2}+2\epsilon\int \Phi h+\epsilon^{2}\int|h|^{2}+\mathcal{O}(\epsilon^{3}) \\ 
	& & \frac{1}{\|\Phi+\epsilon h\|_{p+1}^{2}}= 
	 \lambda^{-2/(p+1)}\left[1-\frac{2}{\lambda}\epsilon\int \Phi^{p}h-\epsilon^{2}\frac{p}{\lambda}\int \Phi^{p-1}h^{2}+\epsilon^{2}\frac{p+3}{\lambda^{2}}\left(\int \Phi^p h  \right)^{2}\right]+\mathcal{O}(\epsilon^{3}) 
	\end{eqnarray*}
	Combining these together, we get the expansion 
	\begin{align*}
		J[\Phi+\epsilon h]= & \lambda^{-\frac{2}{p+1}}\left(\int|x|^{2a}|\nabla \Phi|^{2}+\Phi^{2}\ dx\right)\\
		& +\ \frac{2\epsilon}{\lambda^{\frac{2}{p+1}}}\left(\int|x|^{2a}\nabla \Phi\cdot\nabla h+\Phi h\ dx-\frac{1}{\lambda}\left[\int|x|^{2a}|\nabla \Phi|^{2}+\Phi^{2}\ dx\right]\int \Phi^{p} h\ dx\right)\\
		& +\ \lambda^{-\frac{2}{p+1}}\epsilon^{2}\left(\int|x|^{2a}|\nabla h|^{2}+\int|h|^{2}\right)\\
		& -\ \lambda^{-\frac{2}{p+1}}\epsilon^{2}\frac{4}{\lambda}
		\left(\int \Phi^{p} h\ dx\right)\left[\int|x|^{2a}\nabla \Phi\cdot\nabla h+\Phi h\ dx\right]\\
		& +\ \lambda^{-\frac{2}{p+1}}\epsilon^{2}\left(\int|x|^{2a}|\nabla \Phi|^{2}+\Phi^{2}\ dx\right)\left[-\frac{p}{\lambda}\int \Phi^{p-1}h^{2}+\frac{p+3}{\lambda^{2}}(\int \Phi^p h)^{2}\right]+\mathcal{O}(\epsilon^{3}).
	\end{align*}
	Since $m(0)=\min_{\eps} m(\eps)$, it follows that $m'(0)=0, m''(0)\geq 0$. Selecting
	$$\kappa:=\frac{1}{\lambda}\left[\int|x|^{2a}|\nabla\Phi|^{2}+\Phi^{2}\ dx\right]>0,
	$$
	 we see that these relations for $m$ translate to 
  to the following equation and inequality, 
	\begin{equation}
		\label{320} 
	\dpr{-\nabla\cdot(|x|^{2a}\nabla\Phi)+\Phi-\kappa \Phi^{p}}{h} =   \int|x|^{2a}\nabla\Phi\cdot\nabla h+\Phi h-\kappa\Phi^{p} h\ dx=0
	\end{equation}
		 
	\begin{equation}
		\label{310} 
		\int|x|^{2a}|\nabla h|^{2}+|h|^{2}-\kappa p \Phi^{p-1}h^{2}\ dx+\left[\frac{(p-1)\kappa}{\lambda}\right]\left(\int \Phi^{p} h\ dx\right)^{2}\geq 0.
	\end{equation}
	Clearly, \eqref{320} yields that $\Phi$ satisfies the Euler-Lagrange equation \eqref{170} in distributional sense, while the inequality \eqref{310} will be useful in the sequel.  
\end{proof}

\subsection{The existence claim in Theorem \ref{theo:10}}

An obvious variation of the constrained minimizers yields solutions $\phi$ of \eqref{120}. Indeed, setting up $ \phi(x):=\ka^{\f{1}{p-1}}\Phi(x)$, we obtain a solution of \eqref{120}, given that $\Phi$ solves \eqref{170}. This proves the existence claims in Theorem \ref{theo:10}. The pointwise decay \eqref{18} requires an additional argument and it will be addressed later. 
\section{Stability of the waves}
We start with establishing some basic spectral properties of the linearized operators. 
\subsection{The linearized operators $\cl_\pm$ and their spectral properties}
We now proceed to define the appropriate linearized operators. Informally, 
\begin{eqnarray*}
	L_- f (x)&=& -\nabla\cdot(|x|^{2a}\nabla f)+1-\ka \Phi^{p-1} f\\
	L_+ f (x) &=&  -\nabla\cdot(|x|^{2a}\nabla f)+1-p \ka  \Phi^{p-1} f
\end{eqnarray*} 
In order to introduce them in a formal manner, consider the corresponding symmetric quadratic forms 
\begin{eqnarray*}
	q_-(f,g) &=& \int_{\rd}  |x|^{2a}\nabla f (x)\cdot \nabla \bar{g} (x)+f \bar{g} -\ka \Phi^{p-1} f\bar{g}\\
 q_+ (f,g)&=&  \int_{\rd} |x|^{2a}\nabla f (x)\cdot \nabla \bar{g} (x)+f \bar{g} -p \ka  \Phi^{p-1} f\bar{g} dx 
\end{eqnarray*} 
with\footnote{keeping in mind that $\Phi\in L^\infty$} $D(q_\pm)=H^{1,a}$.  Then, we define the self-adjoint operators $L_\pm$ as the Friedrichs extensions of $q_\pm$.  These self-adjoint operators have respective domains $D(L_\pm)$, which will be the basis of our considerations henceforth.  It is immediately clear from \eqref{320} that $L_-[\Phi]=0$.  We shall need further properties of $L_\pm$, which we state and prove below. 
\begin{proposition}
	\label{prop:56} 
	The self-adjoint operators $L_\pm$ have the following spectral properties
	\begin{itemize}
		\item $L_-[\Phi]=0$, $0$ is a simple eigenvalue and $L_-\geq 0$.  In fact, there is a spectral gap 
		\begin{equation}
			\label{l:35} 
			L_-|_{\{\Phi\}^\perp}\geq \de>0.
		\end{equation}
		
		\item $L_+$ has exactly one negative eigenvalue. 
	\end{itemize}
\end{proposition}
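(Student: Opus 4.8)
The plan is to first locate the essential spectrum of both operators so that the relevant part of the spectrum is discrete, then handle $L_-$ via a Perron--Frobenius argument and $L_+$ via the second-variation inequality \eqref{310} coming from the minimality of $\Phi$.

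First I would record that the unperturbed operator $A:=-\nabla\cdot(|x|^{2a}\nabla\,\cdot\,)+1$ satisfies $A\geq I$, since its form is $\int_{\rd}|x|^{2a}|\nabla f|^{2}+|f|^{2}\,dx\geq\|f\|_{L^2}^{2}$; hence $\sigma(A)\subseteq[1,\infty)$. The operators $L_\pm$ differ from $A$ only by the multiplication potentials $\ka\Phi^{p-1}$ and $p\ka\Phi^{p-1}$, which are bounded (as $\Phi\in L^\infty$) and decay by \eqref{18}. I would check that these are relatively form-compact: if $f_n\rightharpoonup0$ in $H^{1,a}$, then splitting $\int\Phi^{p-1}|f_n|^2$ over $\{|x|<R\}$ and $\{|x|\geq R\}$, using the compact embedding $H^{1,a}\hookrightarrow L^{p+1}$ of Proposition \ref{prop:10} together with H\"older on the ball, and the smallness of $\Phi^{p-1}$ for large $R$, forces $\int\Phi^{p-1}|f_n|^2\to0$. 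By Weyl's theorem on stability of the essential spectrum, $\sigma_{ess}(L_\pm)=\sigma_{ess}(A)\subseteq[1,\infty)$, so the spectrum of each $L_\pm$ strictly below $1$ consists of finitely many isolated eigenvalues of finite multiplicity.

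For $L_-$, the identity $L_-\Phi=0$ is exactly the weak Euler--Lagrange relation \eqref{320}. Since the weighted Dirichlet form $\int|x|^{2a}|\nabla f|^2$ generates a positivity-preserving (indeed irreducible, hence positivity-improving) semigroup, the bottom eigenvalue $\mu_0=\inf\sigma(L_-)$ is simple with a strictly positive eigenfunction. As $\Phi>0$ (strong maximum principle) is a $0$-eigenfunction, and two positive functions cannot be $L^2$-orthogonal, $\Phi$ must realize the ground state: $\mu_0=0$, it is simple, and $L_-\geq0$. Combined with the discreteness below $1$ from the first step, $0$ is isolated, so the next point of $\sigma(L_-)$ is some $\de>0$, yielding $L_-|_{\{\Phi\}^\perp}\geq\de>0$, i.e.\ \eqref{l:35}. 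For $L_+$, writing $L_+=L_--(p-1)\ka\Phi^{p-1}$ and testing on $\Phi$ gives $\langle L_+\Phi,\Phi\rangle=-(p-1)\ka\int\Phi^{p+1}\,dx<0$, so $L_+$ has at least one negative eigenvalue. For the matching upper bound I would read off that the first integral in \eqref{310} is precisely $\langle L_+h,h\rangle$; hence on the codimension-one subspace $\{h:\int\Phi^p h\,dx=0\}$ the correction term drops and \eqref{310} becomes $\langle L_+h,h\rangle\geq0$. Thus $L_+$ is nonnegative on a subspace of codimension one, so the min--max principle gives $n(L_+)\leq1$, and together with the lower bound $L_+$ has exactly one negative eigenvalue.

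The min--max counting for $L_+$ is routine. The genuinely delicate inputs are the two spectral-theoretic facts for the \emph{degenerate} operator: that the weight $|x|^{2a}$ (vanishing at $0$, growing at infinity) still produces a positivity-improving semigroup, so that the Perron--Frobenius conclusion (simple, positive ground state) applies to $L_-$; and the relative form-compactness that pins down $\sigma_{ess}$. Both rest on the decay \eqref{18} of $\Phi$ and on Proposition \ref{prop:10}, but I expect the irreducibility/positivity-improving argument for the degenerate weight to be the main point requiring care.
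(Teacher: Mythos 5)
Your treatment of $L_+$ coincides with the paper's: one reads $\dpr{L_+h}{h}$ off the second-variation inequality \eqref{310}, notes that the correction term vanishes on $\{\Phi^p\}^\perp$ so that $n(L_+)\le 1$ by min--max, and tests on $\Phi$ to get $\dpr{L_+\Phi}{\Phi}<0$. The divergence is entirely in the $L_-$ part, and there your argument has a genuine gap: all three assertions about $L_-$ (nonnegativity, simplicity of $0$, and the gap \eqref{l:35}) are made to rest on a Perron--Frobenius step, namely that the semigroup generated by $-\nabla\cdot(|x|^{2a}\nabla)$ is positivity \emph{improving}, so that the bottom eigenvalue is simple with a strictly positive eigenfunction, together with the claim $\Phi>0$ everywhere. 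Neither input is available at this stage. Proposition \ref{prop:45} only yields $\Phi\ge 0$, and a strong maximum principle ``through the origin'' for the degenerate operator is exactly as delicate as the irreducibility you flag. Worse, irreducibility is genuinely doubtful for $d=1$, $a\ge \f12$: the scale integral $\int_0^1 s^{-2a}\,ds$ diverges, elements of the form domain $H^{1,a}(\rone)$ need not be continuous across $x=0$, and the two half-lines threaten to decouple, in which case the ground state need not be simple and the Perron--Frobenius conclusion fails as stated. Deferring this to ``the main point requiring care'' leaves the core of the claim unproved. (Your Weyl/relative-form-compactness step for locating $\sigma_{ess}$ is fine, but it does not supply irreducibility.)

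The paper avoids the issue entirely by staying variational. Nonnegativity of $L_-$ follows from \eqref{310} combined with the Cauchy--Schwarz bound $\left(\int\Phi^p h\right)^2\le\lambda\int\Phi^{p-1}h^2$, which gives $0\le q(h,h)\le\dpr{L_-h}{h}$ for real $h$; no semigroup positivity is needed. For the spectral gap, the \emph{quantitative} Cauchy--Schwarz identity \eqref{400} upgrades this to
\begin{equation*}
\dpr{L_-h}{h}\ \ge\ \kappa(p-1)\,\bigl\|h\,\Phi^{\f{p-1}{2}}-\mu\,\Phi^{\f{p+1}{2}}\bigr\|_{L^2}^2,\qquad \mu=-\f{\int h\Phi^p}{\int\Phi^{p+1}}.
\end{equation*}
If $h_n\perp\Phi$, $\|h_n\|_{L^2}=1$ and $\dpr{L_-h_n}{h_n}\to0$, then the right-hand side tends to $0$; if $\mu_n\to0$ this forces $\int\Phi^{p-1}h_n^2\to0$ and hence $\|h_n\|_{H^{1,a}}\to0$, a contradiction, while if $\mu_n\to\mu_0\ne0$ one tests the resulting weak convergence against $\Phi$ on the set $\{\Phi>0\}$ and the orthogonality $h_n\perp\Phi$ yields $\mu_0\|\Phi\|_{L^2}^2=0$, again a contradiction. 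This argument uses only $\Phi\ge0$, and the simplicity of the zero eigenvalue is then a corollary of the gap rather than an input. If you wish to keep your route, you must actually prove the positivity-improving property for the degenerate weight (and strict positivity of $\Phi$), and you should expect to have to treat the one-dimensional case $a\ge\f12$ separately.
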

\begin{proof}
	We start with the proof of the properties of $L_+$. The equation \eqref{310} shows that for real-valued test functions $h$, 
	$$
	\langle L_{+}h,h\rangle+
	\left[\frac{(p-1)\kappa}{\lambda}\right]\left(\int\Phi^{p} h\ dx\right)^{2}\geq0.
	$$
	Further, if we take $h\perp \Phi^p$, $\langle L_{+}h,h\rangle\geq0,$so
	we have that
	$
	L_{+}|_{\{\Phi^p\}^{\perp}}\geq0.
	$
	In particular, from the min-max characterization of the eigenvalues for a self-adjoint operator, it follows that $L_+$ has at most one negative eigenvalue. On the other hand, a direct calculation and the Euler-Lagrange equation \eqref{170} show that 
	$$
	\dpr{L_+ \Phi}{\Phi} = -\ka(p-1) \int \Phi^p<0,
	$$
	whence $L_+$ has exactly one negative eigenvalue.

	We now turn our attention to $L_-$. A simple application of the Cauchy-Schwartz inequality shows that,
	\begin{equation}
		\label{330} 
		\left(\int \Phi^p  h\ dx\right)^{2}\leq\left(\int \Phi^{p+1}\ dx\right)\left(\int \Phi^{p-1} h^{2}\ dx\right)=\la \left(\int \Phi^{p-1} h^{2}\ dx\right)
	\end{equation}
	so applying this inequality in  \eqref{310} yields the inequality 
	\begin{eqnarray*}
		& & 0\leq 	\int|x|^{2a}|\nabla h|^{2}+|h|^{2}-\kappa p \Phi^{p-1}h^{2}\ dx+\left[\frac{(p-1)\kappa}{\lambda}\right]\left(\int \Phi^{p} h\ dx\right)^{2} \\
		&\leq& \int|x|^{2a}|\nabla h|^{2}+h^{2}-\kappa \Phi^{p-1} h^{2}\ dx=\langle L_{-}h,h\rangle 
	\end{eqnarray*}
	for all real-valued $h$. As $L_-$ preserves real and odd parts, it follows that $L_-\geq 0$. 
	We need however further properties of $L_-$, namely the spectral gap at zero. 
	To this end, note the formula for real-valued functions $f,g$, 
	\begin{equation}
		\label{400} 
	\int f^2 \int g^2 - \left(\int f g\right)^2 = 	\|\mu f+g\|^2_{L^2} \int f^2, \ \ \mu=-\f{\int f g}{\int f^2}. 
	\end{equation}
	which yields a quantitative form of the positive remainder in the Cauchy-Schwartz's inequality. We now show the spectral gap property for $L_-$.
	
	Suppose for a contradicition that the spectral gap for $L_-$  does not hold. Since we have already established $L_-\geq 0$, it must be that there is a sequence $h_n\in D(L_-): h_n\perp \Phi, \|h_n\|=1$, so that $\lim_n \dpr{L_- h_n}{h_n}=0$. Apply \eqref{400} to 
	$$
	f=\Phi^{\f{p+1}{2}}, g_n=h_n \Phi^{\f{p-1}{2}}, 
	\mu_n=-\f{\int h_n \Phi^p}{\int \Phi^{p+1}}.
	$$
	 We obtain, 
	\begin{equation}
		\label{410} 
		\int \Phi^{p+1}  \int h_n^2 \Phi^{p-1} - \left(\int h_n \Phi^p\right)^2 = 	\|h_n \Phi^{\f{p-1}{2}} - \mu_n \Phi^{\f{p+1}{2}}\|^2_{L^2}  	\int \Phi^{p+1}. 
	\end{equation}
	In the notations of the Proposition \ref{prop:45}, note that we have introduced $\la=\int \Phi^{p+1}$. Denoting the quadratic form in \eqref{310} by $q: q(h,h)\geq 0$, we have the relation 
	\begin{eqnarray*}
		0 &\leq & q(h_n,h_n)=\int|x|^{2a}|\nabla h_n|^{2}+|h_n|^{2}-\kappa p \Phi^{p-1}h_n^{2}\ dx+\left[\frac{(p-1)\kappa}{\lambda}\right]\left(\int \Phi^{p} h_n\ dx\right)^{2}=\\
		& =& \int|x|^{2a}|\nabla h_n|^{2}+|h_n|^{2}-\kappa  \Phi^{p-1}h_n^{2}\ dx -\f{\ka(p-1)}{\la} \left[\la \int h_n^2 \Phi^{p-1} - \left(\int h_n \Phi^p\right)^2\right] \\ 
		&=&\dpr{L_- h_n}{h_n}-\ka(p-1) \|h_n \Phi^{\f{p-1}{2}} - \mu_n \Phi^{\f{p+1}{2}}\|^2_{L^2}. 
	\end{eqnarray*}
	Rearranging terms yields 
	$$
	\dpr{L_- h_n}{h_n}\geq \ka(p-1) \|h_n \Phi^{\f{p-1}{2}} - \mu_n \Phi^{\f{p+1}{2}}\|^2_{L^2}. 
	$$
	As $\lim_n \dpr{L_- h_n}{h_n}=0$, it follows that $\lim_n \|h_n \Phi^{\f{p-1}{2}} - \mu_n \Phi^{\f{p+1}{2}}\|_{L^2}=0$. 
	
By Cauchy-Schwartz, $\mu_n\leq \f{\|h_n\| \|\Phi\|_{L^{2p}}^p}{\la}$ is a bounded sequence. By passing to a subsequence if necessary, we may  and do assume that $\mu_n$ converges,  to say $\mu_0$. If $\mu_0=0$, we conclude that $\lim_n \|h_n \Phi^{\f{p-1}{2}}\|_{L^2}=0$, which together with $\lim_n \dpr{L_- h_n}{h_n}=0$ implies that $\lim_n \|h_n\|_{H^{1,a}}=0$, a contradiction. If $\mu_0\neq 0$, we conclude that 
$$
\lim_n \int \Phi^{p-1} |h_n-\mu_0 \Phi|^2 dx=0.
$$
	Consider the open set $\ca=\{x\in \rd: \Phi(x)>0\}$. For every test function, $\zeta\in C^\infty_0(\ca)$, we have that 
	$$
	|\dpr{\zeta}{h_n-\mu_0 \Phi}|^2\leq \|\zeta\|^2_{L^2} \int_{supp \zeta} |h_n-\mu_0 \Phi|^2 \leq c_\zeta \int \Phi^{p-1} |h_n-\mu_0 \Phi|^2 dx,
	$$
	whence $\lim_n \dpr{\zeta}{h_n-\mu_0 \Phi}=0$, and so $h_n-\mu_0\Phi$ tends  weakly to zero,  in $L^2(\ca)$. In particular, testing this weak convergence with $\Phi$, we obtain (using $h_n\perp \Phi$), $0=\lim_n  \dpr{\Phi}{h_n-\mu_0 \Phi}=-\mu_0 \|\Phi\|^2\neq 0$, a contradiction. All in all, we have established the spectral gap property. 
\end{proof}
Clearly, rescaling $\Phi\to \phi$ reveals the appropriate properties of the operators $\cl_\pm$. We summarize them in the following Proposition. 
\begin{proposition}
	\label{prop:52} 
	Let $\phi:=\ka^{\f{1}{p-1}} \Phi$, where $\Phi$ is the minimizer of \eqref{150} from Proposition \ref{prop:45}.  Then, $\phi$ satisfies \eqref{120}. In addition, the corresponding  linearized operators $\cl_\pm$ satisfy   
	\begin{itemize}
		\item $\cl_-[\phi]=0$, $0$ is a simple eigenvalue and $\cl_-\geq 0$.  There is also the spectral gap property 
		\begin{equation}
			\label{l:305} 
			\cl_-|_{\{\phi\}^\perp}\geq \de>0.
		\end{equation}
		\item $\cl_+$ has exactly one negative eigenvalue. 
	\end{itemize}
\end{proposition}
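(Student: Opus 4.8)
The plan is to read off every assertion from Proposition \ref{prop:56} by noting that the rescaling $\Phi\mapsto\phi$ turns the operators $L_\pm$ into $\cl_\pm$ essentially verbatim. First I would confirm that $\phi=\ka^{\f{1}{p-1}}\Phi$ solves \eqref{120} with $\om=1$. Substituting into the left-hand side and using the Euler--Lagrange equation \eqref{170} in the form $-\nabla\cdot(|x|^{2a}\nabla\Phi)+\Phi=\ka\Phi^p$, one finds that the two nonlinear contributions carry the respective factors $\ka^{\f{1}{p-1}}\cdot\ka=\ka^{\f{p}{p-1}}$ and $\ka^{\f{p}{p-1}}$, which cancel because $\f{1}{p-1}+1=\f{p}{p-1}$. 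This is precisely the computation already indicated just before the statement.

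The crucial identity is $\phi^{p-1}=\ka\,\Phi^{p-1}$. Consequently the zeroth-order potentials appearing in $\cl_\pm$ satisfy $p\,\phi^{p-1}=p\,\ka\,\Phi^{p-1}$ and $\phi^{p-1}=\ka\,\Phi^{p-1}$, which are exactly the potentials of $L_+$ and $L_-$ from Proposition \ref{prop:45}. Hence, with $\om=1$, the quadratic forms $q_\pm$ built on $\phi$ coincide with those built on $\Phi$ on the common form domain $H^{1,a}$; their Friedrichs extensions therefore agree, so $\cl_\pm=L_\pm$ as self-adjoint operators. Every spectral conclusion of Proposition \ref{prop:56} then transfers at once: $\cl_-[\phi]=\ka^{\f{1}{p-1}}L_-[\Phi]=0$ by linearity; zero is a simple eigenvalue and $\cl_-\geq 0$; and since $\phi$ is a nonzero scalar multiple of $\Phi$ we have $\{\phi\}^\perp=\{\Phi\}^\perp$, so the spectral gap \eqref{l:35} yields \eqref{l:305}. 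Likewise $\cl_+=L_+$ inherits exactly one negative eigenvalue.

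For a general frequency $\om>0$ I would invoke the scaling symmetry of \eqref{120}: if $\psi$ solves the $\om=1$ profile equation, then $\phi(x):=\om^{\f{1}{p-1}}\psi(\om^{\f{1}{2(1-a)}}x)$ solves \eqref{120}. This dilation-and-rescaling acts on the form domain $H^{1,a}(\rd)$ as a bounded invertible map that conjugates $\cl_\pm$, up to an overall positive scalar, into the $\om=1$ operators. Since such a similarity preserves the sign of the bottom of the spectrum, the Morse index, the simplicity and location of the kernel, and the existence of a spectral gap, all the asserted properties persist for every $\om>0$.

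I expect no serious obstacle here: the substantive spectral analysis was already carried out in Proposition \ref{prop:56}, and what remains is bookkeeping of the homogeneity exponents. The only point requiring minor care is verifying that the dilation used for general $\om$ genuinely maps $H^{1,a}(\rd)$ onto itself and respects the orthogonality constraint defining $\{\phi\}^\perp$, so that the Morse-index and spectral-gap statements are truly invariant; this is routine once the change of variables is written out.
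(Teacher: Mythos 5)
Your proposal is correct and follows the same route as the paper, which simply declares Proposition \ref{prop:52} to be ``a consequence of Proposition \ref{prop:56}'' after noting that the rescaling $\Phi\mapsto\phi=\ka^{\f{1}{p-1}}\Phi$ carries $L_\pm$ into $\cl_\pm$; your write-up just makes explicit the cancellation of the $\ka$-powers, the identity $\phi^{p-1}=\ka\Phi^{p-1}$ giving $\cl_\pm=L_\pm$ at $\om=1$, and the fact that $\{\phi\}^\perp=\{\Phi\}^\perp$. The additional dilation argument for general $\om>0$ is consistent with the scaling \eqref{380} used later in the paper, though it is not needed for the statement as written.
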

\begin{proof}
	A consequence of Proposition \ref{prop:56}. 
\end{proof}
We are now ready to proceed with the stability of the waves $e^{i \om t}\phi$. 
\subsection{Spectral stability analysis of the waves $e^{i \om t}\phi$}
Specifically, in view of Proposition \ref{prop:52}, we have that $\cl$ is self-adjoint, with domain $D(\cl_+)\times D(\cl_-)$ of the Friedrich's extensions. In the setup of the eigenvalue problem \eqref{k:10}, $\ci:=\cj$, $\ck:=\cl$, and the Hilbert space space $X=H^{1,a}(\rd)$  and its Banach space dual,  $X^*=H^{-1,a}$, while $H:=L^2(\rd)$. 

Next, we look into $Ker(\cl)=\left(\begin{array}{c}
Ker(\cl_+) \\ Ker(\cl_-)
\end{array}\right)$. We have already determined, see Proposition \ref{prop:52} that $Ker(\cl_-)=span[\phi]$, while the question for $Ker(\cl_+)$ was left open\footnote{although we have of course conjectured that $Ker(\cl_+)=\{0\}$. }. Let us proceed however toward a the study of  $gKer(\cl)$ anyway. 

We first look into adjoint eigenvectors behind $\left(\begin{array}{c}
0 \\ \phi
\end{array}\right)\in Ker(\cl)$. This means, by applying $\cj^{-1}=-\cj$, we solve 
$$
\cl \vec{v}=\left(\begin{array}{c}
	 \phi \\ 0
\end{array}\right)\in Ker(\cl). 
$$
Provided $\phi\in Im(\cl_+)$, we can solve this uniquely  $v_1=\cl_+^{-1}\phi \in Ker(\cl_+)^\perp$, so $\vec{v}=\left(\begin{array}{c}
	\cl_+^{-1}\phi \\ 0
\end{array}\right)$. 
Looking further for second adjoint vector, we need to solve 
$$
\cl \vec{z}=\left(\begin{array}{c}
0 \\ -	\cl_+^{-1}\phi 
\end{array}\right),
$$
or equivalently $\cl_-z_2=-	\cl_+^{-1}\phi $. This should satisfy the solvability/Fredholm condition \\ $\dpr{\cl_+^{-1}\phi}{\phi}=0$. Thus, whenever $\dpr{\cl_+^{-1}\phi}{\phi}\neq 0$, we have exactly only one adjoint eigenvector behind $\left(\begin{array}{c}
	0 \\ \phi
\end{array}\right)$, namely $\left(\begin{array}{c}
\cl_+^{-1}\phi \\ 0
\end{array}\right)$. 

Let now assume that some $\psi\in Ker(\cl_+)$ and we look for generalized eigenvectors behind $\left(\begin{array}{c}
\psi \\ 0
\end{array}\right)$. Matters reduce to solving 
$$
\cl \vec{v}=\left(\begin{array}{c}
	0 \\ -\psi
\end{array}\right). 
$$
Thus, subject to the solvability condition $\dpr{\psi}{\phi}=0$, we can solve uniquely $v_2=\cl_-^{-1} \psi\in \{\phi\}^\perp$. Looking for further adjoints means we solve 
$$
\cl \vec{z}=\left(\begin{array}{c}
	\cl_-^{-1}\psi \\ 0
\end{array}\right).
$$
This  last equation requires  the solvability condition, $\dpr{\cl_-^{-1} \psi}{\psi}=0$. Recall however that by Proposition \ref{prop:52}, and specifically \eqref{l:305}, we have that $\cl_-$ is strictly non-negative on $\{\phi\}^\perp$, whence $	\cl_-^{-1}|_{\{\phi\}^\perp}>0$. It follows that the Fredholm condition $\dpr{\cl_-^{-1} \psi}{\psi}=0$ fails, and there is no additional adjoint eigenvectors, behind $\left(\begin{array}{c}
	\psi \\ 0
\end{array}\right)$ (which itself may not exist). Thus, we have shown that the system 
$$
\eta^{(1)}:=\left(\begin{array}{c}
	\cl_+^{-1}\phi \\ 0
\end{array}\right), \eta^{(2)}:=\left(\begin{array}{c}
0 \\ \cl_-^{-1}\psi \
\end{array}\right), 
$$
spans $gKer(\cj\cl)$, provided $\dpr{\cl_+^{-1}\phi}{\phi}\neq 0$. 
We have thus established the following proposition. 
\begin{proposition}
	\label{prop:39}
Let $\cl_\pm$ be the linearized operators , corresponding of the  waves $\phi$. In addition to the properties listed in Proposition \ref{prop:52}, they do satisfy the following spectral properties, provided $\phi\in Ran \cl_+=Ker[\cl_+]^\perp$ and $\dpr{\cl_+^{-1}\phi}{\phi}\neq 0$. 
\begin{itemize}
	\item $gKer(\cj\cl)$ is at most two dimensional and 
	$$
	gKer(\cj\cl)=span[\eta^{(1)}, \eta^{(2)}].
	$$
\end{itemize}
\end{proposition}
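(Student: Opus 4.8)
The plan is to compute $gKer(\cj\cl)$ directly by constructing the Jordan chains sitting above each element of $Ker(\cl)$, deciding solvability at each stage through the Fredholm alternative for the self-adjoint operators $\cl_\pm$. The starting point is that $\cj$ is invertible with $\cj^{-1}=-\cj$, so $\cj\cl\vec v=0$ forces $\cl\vec v=0$; hence $Ker(\cj\cl)=Ker(\cl)=Ker(\cl_+)\oplus Ker(\cl_-)$. By Proposition \ref{prop:52}, $Ker(\cl_-)=span[\phi]$, while $Ker(\cl_+)$ is a priori unknown, so I write $\psi$ for a (possibly absent) generator of $Ker(\cl_+)$. Generalized eigenvectors are obtained by solving, for a given kernel vector $\vec b$, the equation $\cj\cl\vec v=\vec b$, which after applying $-\cj$ is equivalent to the decoupled pair $\cl_+ v_1=(-\cj\vec b)_1$, $\cl_- v_2=(-\cj\vec b)_2$.

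First I would treat the chain above $\left(\begin{smallmatrix}0\\\phi\end{smallmatrix}\right)$. Since $-\cj\left(\begin{smallmatrix}0\\\phi\end{smallmatrix}\right)=\left(\begin{smallmatrix}\phi\\0\end{smallmatrix}\right)$, the relevant equation is $\cl_+v_1=\phi$, and this is solvable precisely because the hypothesis places $\phi\in Ran(\cl_+)=Ker[\cl_+]^\perp$; the unique solution orthogonal to $Ker(\cl_+)$ yields $\eta^{(1)}=\left(\begin{smallmatrix}\cl_+^{-1}\phi\\0\end{smallmatrix}\right)$. To prolong the chain one must solve $\cl_-z_2=-\cl_+^{-1}\phi$, whose solvability condition is $\cl_+^{-1}\phi\perp Ker(\cl_-)=span[\phi]$, i.e. $\dpr{\cl_+^{-1}\phi}{\phi}=0$. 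The standing hypothesis $\dpr{\cl_+^{-1}\phi}{\phi}\neq0$ makes this fail, so the chain has length exactly two and contributes only $\eta^{(1)}$.

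Next I would treat the chain above $\left(\begin{smallmatrix}\psi\\0\end{smallmatrix}\right)$ in the case $Ker(\cl_+)\neq\{0\}$. Here $-\cj\left(\begin{smallmatrix}\psi\\0\end{smallmatrix}\right)=\left(\begin{smallmatrix}0\\-\psi\end{smallmatrix}\right)$, so one solves $\cl_-v_2=-\psi$, which is possible iff $\dpr{\psi}{\phi}=0$; when this holds I set $\eta^{(2)}=\left(\begin{smallmatrix}0\\\cl_-^{-1}\psi\end{smallmatrix}\right)$ with $\cl_-^{-1}\psi\in\{\phi\}^\perp$. Prolonging this chain requires $\cl_+z_1=\cl_-^{-1}\psi$ to be solvable, i.e. $\dpr{\cl_-^{-1}\psi}{\psi}=0$; but $\psi\in\{\phi\}^\perp$, so the spectral gap \eqref{l:305} gives $\cl_-^{-1}|_{\{\phi\}^\perp}>0$ and hence $\dpr{\cl_-^{-1}\psi}{\psi}>0$ for $\psi\neq0$. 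Thus the condition fails and this chain, too, terminates, contributing at most the single vector $\eta^{(2)}$.

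Collecting the two analyses, every Jordan chain above $Ker(\cl)$ terminates at length two, and the only generalized vectors produced are $\eta^{(1)}$ and, when the generator $\psi$ exists with $\dpr{\psi}{\phi}=0$, also $\eta^{(2)}$; so $\eta^{(1)},\eta^{(2)}$ complement $Ker(\cl)$ inside $gKer(\cj\cl)$ in the sense of Section \ref{sec:2.4}. In particular, under the conjectured triviality of $Ker(\cl_+)$ one has $Ker(\cl)=span\left[\left(\begin{smallmatrix}0\\\phi\end{smallmatrix}\right)\right]$ and $gKer(\cj\cl)=span\left[\left(\begin{smallmatrix}0\\\phi\end{smallmatrix}\right),\eta^{(1)}\right]$ is exactly two-dimensional. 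I expect the main obstacle to be the bookkeeping forced by the unknown $Ker(\cl_+)$: since we cannot yet assume it trivial, the argument must carry the hypothetical $\psi$ and verify in every case that no chain extends past length two. The two decisive points are the Fredholm obstructions, and they rest on genuinely different inputs — termination of the $\phi$-chain uses the nondegeneracy assumption $\dpr{\cl_+^{-1}\phi}{\phi}\neq0$, while termination of the $\psi$-chain uses the strict positivity $\cl_-^{-1}|_{\{\phi\}^\perp}>0$ furnished by the spectral gap \eqref{l:305}.
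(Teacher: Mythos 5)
Your proposal is correct and follows essentially the same route as the paper: identify $Ker(\cj\cl)=Ker(\cl)$, build Jordan chains above $\left(\begin{smallmatrix}0\\\phi\end{smallmatrix}\right)$ and above a putative $\psi\in Ker(\cl_+)$, and terminate each chain at length two via the Fredholm obstructions $\dpr{\cl_+^{-1}\phi}{\phi}\neq 0$ and $\dpr{\cl_-^{-1}\psi}{\psi}>0$ (the latter from the spectral gap \eqref{l:305}). The only (harmless) deviation is a sign in $\eta^{(2)}$, which does not affect the span; both you and the paper implicitly treat $Ker(\cl_+)$ as at most one-dimensional in the final count.
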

It remains to analyze the matrix $\cd$. 
In forming the matrix $\cd$, as described in Section \ref{sec:2.4}, we observe that 
$$
\cd=\left(\begin{array}{cc}
	\dpr{\cl_+^{-1} \phi}{\phi} & 0 \\
	0 & 	\dpr{\cl_-^{-1} \psi}{\psi}
\end{array}\right).
$$
Note that as $\psi\perp \phi$ and $	\cl_-^{-1}|_{\{\phi\}^\perp}>0$, we have that $\dpr{\cl_-^{-1} \psi}{\psi}>0$, whence $n_0(\cd)=n_0(	\dpr{\cl_+^{-1} \phi}{\phi})$. So, it remains to compute the sign of this quantity. 

To this end, note that as $\phi_\om$ satisfy the equation \eqref{120}, then by a simple scaling argument, we see that 
\begin{equation}
	\label{380} 
	\phi_\om(x)=\om^{\f{1}{p-1}} \phi_1(\om^{\f{1}{2(1-a)}}x)
\end{equation}
where $\phi_1$ solves \eqref{120} with $\om=1$. Furthermore, taking into account the formula \eqref{380}, $\om\to \phi_\om$ is smooth as a mapping from $\rone_+\to H^{2,a}(\rd)$, so one can take a derivative with respect to $\om$ in \eqref{120}. The result is 
$$
\cl_+[\p_\om ]=-\phi_\om.
$$
Thus, $\phi_\om \in Ran[\cl_+]$ and moreover, $\p_\om\phi_\om=-\cl_+^{-1}[\phi]$. This allows us to compute the quantity of interest, namely $\dpr{\cl_+^{-1} \phi}{\phi})$. We obtain 
$$
\dpr{\cl_+^{-1} \phi}{\phi})=-\f{1}{2}\p_\om \|\phi_\om\|^2_{L^2}=-\f{1}{2}\left(\f{2}{p-1}-\f{d}{2(1-a)}\right) \om^{\f{2}{p-1}-\f{d}{2(1-a)}-1}
\|\phi_1\|^2_{L^2},
$$
where we have taken advantage of the formula \eqref{380}. Thus, the stability is equivalent to $\left(\f{d}{2(1-a)}-\f{2}{p-1}\right)<0$, or equivalently, 
$$
1<p<1+\f{4(1-a)}{d}.
$$
Clearly, the instability occurs at $p>1+\f{4(1-a)}{d}$, while the bifurcation point $p=1+\f{4(1-a)}{d}$ features an extra pair of eigenvalues at zero, as discussed earlier.  
 
 This completes the proof of Theorem \ref{theo:20}. 
  
\section{Finite time blow-up for the spectrally unstable waves}
We begin the proof of Theorem \ref{theo:30}. Assuming, for a contradcition, that $u$ is a global solution to \eqref{14},  multiply  it by $\bar{u}$ and take
the imaginary part. The result is 
\begin{equation}
	\label{600}
	\frac{d}{dt}|u|^{2}=-\nabla\cdot(2|x|^{2a}\mathfrak{I}(\bar{u}\nabla u)).
\end{equation}
Then multiplying equation \eqref{14} by $|x|^{2-2a}$
and integrating by parts, we get 
\begin{align}
	\frac{d}{dt}\int|x|^{2-2a}|u|^{2}\ dx= & -\int|x|^{2-2a}\nabla\cdot(2|x|^{2a}\mathfrak{I}(\bar{u}\nabla u))\ dx\label{605}\\
	= & \ 4(1-a)\sum_{j}\int x_{j}\mathfrak{I}(\bar{u}u_{j})\ dx.\nonumber 
\end{align}
Now let us multiply \eqref{14} by $x\cdot\nabla\bar{u}$,
use integration by parts and take the real parts. We shall analyze
term-wise. With the non linear term, we get the relation, 
$$
\mathfrak{R}\left(\int|u|^{p-1}u(x\cdot\nabla\bar{u})\right)=-\frac{d}{p+1}\int|u|^{p+1}\ dx,
$$
while for the dispersion term,  we get
$$
\mathfrak{R}\left(\int\nabla\cdot(|x|^{2a}\nabla u)(x\cdot\nabla\bar{u})\ dx\right)=\frac{d+2a-2}{2}\int|x|^{2a}|\nabla u|^{2}\ dx.
$$
For the last term observe that 
\begin{eqnarray*}
	\mathfrak{R}\left(\int iu_{t}(x\cdot\nabla\bar{u})\ dx\right) &= & \ \mathfrak{R}\left(i\int\sum_{j}x_{j}\bar{u_{j}}u_{t}\ dx\right) 
	= \ \mathfrak{R}\left(\frac{i}{2}\sum_{j}\int x_{j}[\bar{u_{j}}u_{t}-u_{j}\bar{u_{t}}]\ dx\right)\\
	&= & \ \mathfrak{R}\left(\frac{i}{2}\sum_{j}\int x_{j}[\frac{\partial}{\partial t}(\bar{u_{j}}u)-\frac{\partial}{\partial j}(u\bar{u_{t}})]\ dx\right)\\
	&= & \ \mathfrak{\frac{\partial}{\partial t}R}\left(\frac{i}{2}\sum_{j}\int x_{j}\bar{u_{j}}u\right)+d\mathfrak{R}\left(\frac{i}{2}\int\bar{u_{t}}u\ dx\right)\\
	& = & \ \frac{1}{2}\frac{\partial}{\partial t}\sum_{j}\int x_{j}\mathfrak{I}(\bar{u}u_{j})\ dx+d/2\left(-\int|x|^{2a}|\nabla u|^{2}+\int|u|^{p+1}\ dx\right)
\end{eqnarray*}
Therfore we get, 
\begin{align}
	\label{610}
	\frac{1}{2}\frac{d}{dt}\sum_{j}\int x_{j}\mathfrak{I}(\bar{u}u_{j})\ dx= & (1-a)\int|x|^{2a}|\nabla u|^{2}\ dx+d(\frac{1}{p+1}-\frac{1}{2})\int|u|^{p+1}\ dx\\
	= & 2(1-a)E(u)-\frac{1}{2(p+1)}[4a+d(p-1)-4]\int|u|^{p+1}\ dx.
	\nonumber 
\end{align}
Combining equation \eqref{605} and \eqref{610},
we get the virial identity, 
\begin{equation}
	\frac{1}{16(1-a)}\frac{d^{2}}{dt^{2}}\||x|^{(1-a)}u(t)\|_{L^{2}}^{2}=P(u(t))\label{620}
\end{equation}
where
\begin{equation}
	P(u)=\frac{(1-a)}{2}\int|x|^{2a}|\nabla u|^{2}-\frac{\alpha}{2(p+1)}\int|u|^{p+1},\qquad\alpha=\frac{d(p-1)}{2}
	\label{630}
\end{equation}
We follow the arguments by Ohta, \cite{Oh1}   for
the proof of Theorem \ref{theo:30}. 
To this end, let $\phi_{\omega}^{\lambda}(x):=\lambda^{d/2}\phi_{\omega}(\lambda x)$, the
$L^{2}$ norm invariant scaling. Observe that $\|\phi_{\omega}^{\lambda}-\phi_{\omega}\|_{H^{1,a}}\to0$,
as $\lambda\to1^{+}.$ The goal is to show that for the functions
$u_{\lambda}(t,x)$ that solves \eqref{14} with
$u_{0}(x)=\phi_{\omega}^{\lambda}(x),$ 
\begin{equation}
	\label{f:10} 
	\frac{d^{2}}{dt^{2}}\||x|^{(1-a)}u_{\lambda}(t)\|_{L^{2}}^{2}<(1-a)(E(\phi_\om^\la) -E(\phi_{\omega}))
\end{equation}
which implies a finite time blowup, once we show $E(\phi_{\omega}^{\lambda})<E(\phi_{\omega})$. Indeed, \eqref{f:10} implies that the positive function $t\to \||x|^{(1-a)}u_{\lambda}(t)\|_{L^{2}}^{2}$ is concave for all $t>0$, an impossibility.

We complete the remainder of the proof in three 
steps.
\begin{itemize}
	
	\item \emph{Step-1:  $E(u^{\lambda}(t))=E(\phi_{\omega}^{\lambda})<E(\phi_{\omega})$.}
	
	Observe that 
	$$
	E(\phi_{\omega}^{\lambda})=\frac{\lambda^{2-2a}}{2}\int|x|^{2a}|\nabla\phi_{\omega}|^{2}\ dx-\frac{\lambda^{\alpha}}{p+1}\int|\phi_{\omega}|^{p+1}\ dx.
	$$
	
	By the Pohozaev identities \eqref{500}, we get that
	$$
	\int|x|^{2a}|\nabla\phi_{\omega}|^{2}\ dx=\frac{\alpha}{(p+1)(1-a)}\int|\phi_{\omega}|^{p+1}\ dx,
	$$
	whence,  it follows that 
	$$
	E(\phi_{\omega})=\frac{\alpha-2(1-a)}{2(p+1)(1-a)}\|\phi_{\omega}\|_{L^{p+1}}^{p+1}, \quad\text{\ensuremath{\quad E(\phi_{\omega}^{\lambda})=\frac{\alpha\lambda^{2-2a}-2(1-a)\lambda^{\alpha}}{2(p+1)(1-a)}}
		\ensuremath{\|\phi_{\omega}\|_{L^{p+1}}^{p+1}}}.
	$$
	If one considers the function $f(\la)=\alpha\lambda^{2-2a}-2(1-a)\lambda^{\alpha}-\alpha-2(1-a)$,
	then we have $f'(1)=0$ and $f''(1)<0$ precisly when $p>1+\frac{4(1-a)}{d}$.
	Thus,  it follows that $E(\phi_{\omega}^{\lambda})<E(\phi_{\omega})$
	for $\lambda$ sufficiently close to $1.$
	
	Now by the conservation
	of energy, 
	$$
	E(u_{\lambda}(t))=E(\phi_{\omega}^{\lambda})<E(\phi_{\omega}),
	$$
	which completes Step 1. 
	\item \emph{Step-2: $\|u_{\lambda}(t)\|_{L^{p+1}}^{p+1}>\|\phi_{\omega}\|_{L^{p+1}}^{p+1}$} for all $t>0$. 
	
	Observe that $\|u_\la(0)\|_{L^{p+1}}^{p+1}=\|\phi_{\omega}^{\lambda}\|_{L^{p+1}}^{p+1}=\lambda^{\alpha}\|\phi_{\omega}\|_{L^{p+1}}^{p+1}>\|\phi_{\omega}\|_{L^{p+1}}^{p+1}$,
	for $\lambda>1.$ Now suppose for a contradiction, that at some time $t_{0}>0$, 
	$$
	\|u_{\lambda}(t_{0})\|_{L^{p+1}}^{p+1}=\|\phi_{\omega}\|_{L^{p+1}}^{p+1}. 
	$$
	By conservation of mass, $\|u_{\lambda}(t_{0})\|_{L^{2}}=\|\phi_{\omega}\|_{L^{2}}$
	and since $\phi_{\omega}$ is a minimizer for \eqref{150}, we have
	that 
	$$
	\int|x|^{2a}|\nabla\phi_{\omega}|^{2}\ dx\leq\int|x|^{2a}|\nabla u_{\lambda}|^{2}\ dx.
	$$
	It follows that $E(\phi_{\omega})\leq E(u_{\lambda}(t)),$which
	is a contradiction to Step-1. Thus 
	$$
	\|u_{\lambda}(t)\|_{L^{p+1}}^{p+1}>\|\phi_{\omega}\|_{L^{p+1}}^{p+1}
	$$
	for $t>0$. 
	\item \emph{Step-3: Proof of \eqref{f:10}}
	
We have, from energy conservation, 
\begin{eqnarray*}
	& &  \frac{1}{2}\||x|^{a}\nabla u_{\lambda}\|^{2}_{L^2}-\frac{1}{p+1}\|u_{\lambda}(t)\|^{p+1}_{L^{p+1}}= E(u_\la(t))=E(\phi_\om^\la)=  E(\phi_\om^\la) -E(\phi_{\omega})+E(\phi_{\omega})\\
	&=& E(\phi_\om^\la) -E(\phi_{\omega})+\frac{\alpha-2(1-a)}{2(p+1)(1-a)}\|\phi_{\omega}\|_{L^{p+1}}^{p+1}\leq E(\phi_\om^\la) -E(\phi_{\omega})+ \frac{\alpha-2(1-a)}{2(p+1)(1-a)}\|u_{\lambda}(t)\|_{L^{p+1}}^{p+1}, 
\end{eqnarray*}
where in the last step, we have used $\alpha-2(1-a)>0$ and Step -2. Simplifying then leads to 
$$
\frac{1}{2}\||x|^{a}\nabla u_{\lambda}\|^{2}_{L^2}-\frac{\al }{2(p+1)(1-a)}\|u_{\lambda}(t)\|^{p+1}_{L^{p+1}}\leq E(\phi_\om^\la) -E(\phi_{\omega}), 
$$
which is, after multiplying by $(1-a)$, 
$$
P(u_\la(t))\leq (1-a) (E(\phi_\om^\la) -E(\phi_{\omega})),
$$
Taking into account \eqref{620} leads to \eqref{f:10}. 
 
\end{itemize}
This completes the proof of Theorem \ref{theo:30}.

\section{Proof of Theorem \ref{theo:10}} 
We start with a fairly standard observation that variational solutions of \eqref{120} are smooth, at least away from the origin. 
\subsection{Smoothness away from the origin}
We have the following Proposition. 
\begin{proposition}
	\label{prop:16} 
	Let $\phi$ be a distrbutional solution of \eqref{120}, with $\phi\in H^{1,a}$. Then, $\phi\in C^\infty(\rd\setminus\{0\})$. 
\end{proposition}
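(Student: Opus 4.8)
The plan is to exploit that the only singular point of the coefficient $|x|^{2a}$ is the origin: on any open set $\Omega$ whose closure is a compact subset of $\rd\setminus\{0\}$, the weight $|x|^{2a}$ is smooth and bounded above and below, so $-\nabla\cdot(|x|^{2a}\nabla\,\cdot\,)$ is genuinely uniformly elliptic with smooth coefficients there. First I would record that $\phi\in H^{1,a}$ forces $\phi\in H^1_{loc}(\rd\setminus\{0\})$: on $\{|x|\geq\delta\}$ one has $\delta^{2a}\int_{|x|\geq\delta}|\nabla\phi|^2\leq\int|x|^{2a}|\nabla\phi|^2<\infty$, and $\int|\phi|^2<\infty$. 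Expanding the divergence and dividing \eqref{120} by $|x|^{2a}$ recasts the profile equation, on $\Omega$, as the non-divergence-form semilinear equation
$$-\Delta\phi + b(x)\cdot\nabla\phi + \om\, c(x)\,\phi = c(x)\,|\phi|^{p-1}\phi, \qquad b(x)=-2a\frac{x}{|x|^2},\quad c(x)=|x|^{-2a},$$
with $b,c\in C^\infty(\Omega)$ and bounded on $\Omega$. This is now a completely standard interior regularity problem for a power nonlinearity.

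Second, I would run the usual integrability bootstrap. Starting from $\phi\in H^1_{loc}\hookrightarrow L^{2^*}_{loc}$ (with $2^*=2d/(d-2)$ for $d\geq 3$), the right-hand side $c\,|\phi|^{p-1}\phi$ lies in $L^{2^*/p}_{loc}$, so interior $W^{2,s}$ (Calder\'on--Zygmund / Agmon--Douglis--Nirenberg) estimates for the uniformly elliptic operator---whose lower-order terms are absorbed by the a priori bound---give $\phi\in W^{2,2^*/p}_{loc}$, and Sobolev embedding improves the exponent. The point that makes this iteration close is subcriticality: for $d\geq 3$ the existence range \eqref{925} forces $p<\frac{d+2-2a}{d-2+2a}<\frac{d+2}{d-2}$, strictly below the unweighted Sobolev-critical power, while for $d\leq 2$ the embedding $H^1_{loc}\hookrightarrow L^q_{loc}$ for every $q<\infty$ makes the bootstrap immediate. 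After finitely many steps the integrability passes the threshold $s>d/2$, Morrey's embedding yields $\phi\in C^{0,\alpha}_{loc}$, and in particular $\phi$ is locally bounded. Since $s\mapsto|s|^{p-1}s$ is locally Lipschitz for $p>1$, the right-hand side is then locally H\"older, and Schauder estimates upgrade $\phi$ to $C^{2,\alpha}_{loc}(\Omega)$.

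Finally, to reach $C^\infty$ I would iterate Schauder, differentiating the equation. Here the only real obstruction is the limited smoothness of the nonlinearity $g(s)=|s|^{p-1}s$ at $s=0$ when $p$ is not an odd integer: on any region where $\phi$ stays away from $0$, $g\circ\phi$ is $C^\infty$ and each Schauder step gains two derivatives indefinitely. This is precisely where the positivity of $\phi$ enters decisively---since $\phi>0$ on $\rd\setminus\{0\}$, it is locally bounded below by a positive constant, so $g\circ\phi$ is smooth on every such $\Omega$ and the bootstrap is unobstructed, giving $\phi\in C^\infty(\rd\setminus\{0\})$. I expect this last point---the smoothness of the composition $|\phi|^{p-1}\phi$---to be the only genuinely delicate issue; the degeneracy at the origin is sidestepped entirely by localization, and the subcritical integrability bootstrap followed by Schauder is routine.
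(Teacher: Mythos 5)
Your proof is correct, and it follows the same overall strategy as the paper -- localize to a compact set away from the origin, where $-\nabla\cdot(|x|^{2a}\nabla\,\cdot\,)$ is uniformly elliptic with smooth coefficients, and bootstrap -- but the elliptic machinery you invoke is genuinely different. The paper multiplies \eqref{120} by a cutoff $\eta_\eps$ supported in $\eps<|x|<2\eps^{-1}$, compares with the constant-coefficient resolvent $(-\eps^{2a}\De+1)^{-1}$ (using positivity of its kernel) to get an $L^\infty$ bound on $\phi\eta_\eps$, and then differentiates the localized equation repeatedly, closing the loop with $L^1$ bounds on $\|\nabla^N(\phi\eta_\eps)\|_{L^1}$. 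Your route -- rewrite in non-divergence form with smooth bounded $b,c$, run the subcritical Brezis--Kato integrability bootstrap through interior $W^{2,s}$ estimates until the exponent passes $d/2$, then pass to Schauder -- is the standard interior-regularity argument; it is arguably cleaner and makes explicit where subcriticality of $p$ (guaranteed by \eqref{925}) is used, whereas the paper's $L^1$-based iteration is looser about this. Two small remarks. First, your final Schauder iteration uses $\phi>0$ on $\rd\setminus\{0\}$ to make $s\mapsto|s|^{p-1}s$ smooth along $\phi$; positivity is not a stated hypothesis of the proposition, but the paper's own proof has exactly the same implicit reliance (it writes $\phi^p$ throughout), and for the waves actually constructed positivity follows from $\Phi\ge0$ plus the strong maximum principle once $C^{2,\alpha}$ regularity is in hand, so your ordering (Lipschitz nonlinearity up to $C^{2,\alpha}$, positivity only afterwards) is coherent. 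Second, you correctly flag that the smoothness of the composition $|\phi|^{p-1}\phi$ is the only delicate point; the paper glosses over it entirely.
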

{\bf Remark:} In here, we estimate various norms of cutoffs of $\phi$. These rough estimates invariably come with a very bad dependence on the cutoff parameters. In this sense, we establish only that the solution is smooth, away from the origin, so that we can operate freely with it, but it does not imply decay as $|x|\to \infty$ or any smoothness at zero whatsoever. 
\begin{proof}
	Introduce a cut-off funciton  $\chi$, which is smooth and supported in $|x|<2$ with $\chi=1$ for
	$|x|<1$.  
	Fix $\eps: 0<\eps<1$, and set  $\eta_\eps(x):=(1-\chi(x/\epsilon)) \chi(\eps x)$, localizes away from zero and infinity. Let $\phi_\eps:=\phi \eta_\eps$. 
	Multiplying throughout \eqref{120} by $\eta_\epsilon$, 
	\begin{equation}
		\label{880} 
	-|x|^{2a}\Delta\phi_{\eps}+\phi_\epsilon=\phi^{p}\eta_\epsilon +2a|x|^{2a-2}\sum_{j}x_{j} \eta_\epsilon \partial_{j}\phi -2|x|^{2a}\sum_{j}\partial_{j}\phi \p_j \eta_\epsilon -|x|^{2a}\phi\Delta \eta_\epsilon
	\end{equation}
We first obtain $L^\infty$ estimates on $\phi_\epsilon$. As $\phi_\eps$ is supported in the compact set 
$|x|\in (\eps, 2 \eps^{-1})$, we can infer from \eqref{880}, 
\begin{equation}
\label{890}
(-\epsilon^{2a} \De +1) \phi_\eps \leq \phi^{p}\eta_\epsilon +2a|x|^{2a-2}\sum_{j}x_{j} \eta_\epsilon \partial_{j}\phi -2|x|^{2a}\sum_{j}\partial_{j}\phi \p_j \eta_\epsilon -|x|^{2a}\phi\Delta \eta_\epsilon
\end{equation}
Applying the resolvent $(-\epsilon^{2a} \De +1)^{-1}$, which has positive kernel, we obtain 
$$
\phi_\eps\leq (-\epsilon^{2a} \De +1)^{-1}\left[ \phi^{p}\eta_\epsilon +2a|x|^{2a-2}\sum_{j}x_{j} \eta_\epsilon \partial_{j}\phi -2|x|^{2a}\sum_{j}\partial_{j}\phi \p_j \eta_\epsilon -|x|^{2a}\phi\Delta \eta_\epsilon\right]. 
$$
Taking absolute values (recall the resolvent $(-\epsilon^{2a} \De +1)^{-1}$ has positive bounded integrable kernel), and taking $L^\infty$ norms, we arrive at 
$$
\|\phi_\epsilon\|_{L^\infty}\leq C_\eps(\|\phi\|_{L^p}^p+\|\phi_\eps\|_{H^1})\leq 
C_\eps (1+\|\phi\|_{H^{1,a}})^p
$$
Thus, we have the $L^\infty$ bounds on $\phi_\eps$. 

Taking $L^1$ norms of both sides of \eqref{880}, and using the {\it a priori} estimates on $\phi$ (recall $\phi_\eps$ is supported in the compact set 
$|x|\in (\eps, 2 \eps^{-1})$, and $\phi\in H^{1,a}\subset L^{p^*}$), we obtain a bound for $\|\Delta \phi_\eps\|_{L^1}\leq C_\eps$. Taking a derivative in \eqref{880} and again $L^1$ norm, by incorporating the $L^\infty$ bounds on $\phi_\eps$ and $\|\Delta \phi_\eps\|_{L^1}\leq C_\eps$, we obtain $\|\nabla^3 \phi_\eps\|_{L^1}\leq C_\eps$ etc. This can be iterated to 
$
\|\nabla^N \phi_\eps\|_{L^1}\leq C_{N,\eps},
$
which implies $\phi_\eps\in C^\infty(\rd)$. As this can be applied to any $\epsilon>0$, it follows that $\phi\in C^\infty(\rd\setminus\{0\})$. 
\end{proof}
 \subsection{Power decay at infinity}
Next, we establish the decay rate at infinity. This goes through a series of bootstrapping arguments, each time improving the decay in various norms. 
\begin{proposition}
	\label{app:pr10} 
	Let $d\geq 1, a\in (0,1), p>1$ satisfy \eqref{527}. Let also $\phi$ be a positive   $C^\infty(|x|>1) \cap H^{1,a}(\rd)$ solution of \eqref{120}.
	Then, for each $k\geq 0$, and for each  integer $N\in \cn$, there exists $C_N$, so that  
	\begin{equation}
		\label{498}
		\int_{|x|\sim 2^k} \phi^2(x) dx + \int_{|x|\sim 2^k} |\nabla \phi|^2(x) dx	 \leq C_N 2^{- k N}
	\end{equation}
	In other words, the $H^1(|x|\sim M)$ norm decays  faster than any polynomial of $M$. 
\end{proposition}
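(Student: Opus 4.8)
The plan is to run a weighted energy iteration on the exterior region, exploiting the fact that the sub-quadratic weight $|x|^{2a}$ with $a<1$ produces a decaying gain factor on each dyadic shell. Fix $R\ge 1$ and a smooth cutoff $\psi=\psi_R$ with $\psi=1$ on $|x|>2R$, $\psi=0$ on $|x|<R$, and $|\nabla\psi|\le C/R$. Using $\phi\psi^2$ as a test function in the weak form of \eqref{120} (legitimate since $\phi\in C^\infty(|x|>1)\cap H^{1,a}$ and all the weighted integrals converge) and integrating by parts, I would obtain
$$\int|x|^{2a}|\nabla\phi|^2\psi^2+\om\int\phi^2\psi^2 = \int\phi^{p+1}\psi^2 - 2\int|x|^{2a}\phi\psi\,\nabla\phi\cdot\nabla\psi.$$
The cross term is absorbed by Young's inequality, leaving a harmless remainder $2\int|x|^{2a}\phi^2|\nabla\psi|^2$; since $\nabla\psi$ is supported on the shell $R<|x|<2R$, where $|x|^{2a}|\nabla\psi|^2\le CR^{2a-2}$, this remainder is $\le CR^{2a-2}\int_{|x|>R}\phi^2$. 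Because $a<1$, the exponent $2a-2<0$ is precisely the source of the decay.

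The remaining task is to show the nonlinear term $\int\phi^{p+1}\psi^2$ is subordinate. Here I would invoke the Caffarelli--Kohn--Nirenberg inequality \eqref{30} (equivalently the embedding $H^{1,a}\hookrightarrow L^{p+1}$ in the subcritical range \eqref{25}, which is available since the existence condition \eqref{527} furnishes the required $\theta\in(0,1)$). Applied to $\phi$ cut off to $|x|>R/2$, it gives $\int_{|x|>R}\phi^{p+1}\le C\,\mathcal{E}(R/2)^{(p+1)/2}$, where $\mathcal{E}(\rho):=\int_{|x|>\rho}(\phi^2+|x|^{2a}|\nabla\phi|^2)\,dx\to 0$ as $\rho\to\infty$. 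Writing $\mathcal{E}(R/2)^{(p+1)/2}=\mathcal{E}(R/2)^{(p-1)/2}\,\mathcal{E}(R/2)$ with $(p-1)/2>0$, the nonlinear contribution is $o(\mathcal{E}(R/2))$. Combining with the previous step yields a recursion $\mathcal{E}(2R)\le\eps(R)\,\mathcal{E}(R/2)$, where $\eps(R):=C\mathcal{E}(R/2)^{(p-1)/2}+CR^{2a-2}\to 0$.

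To conclude I would iterate. First extract a crude geometric decay: choosing $R_0$ so that $\eps(R)\le\f12$ for $R\ge R_0$ and iterating along $\rho_j=4^jR_0$ gives $\mathcal{E}(\rho_j)\le 2^{-j}\mathcal{E}(\rho_0)$, i.e. a fixed polynomial rate $\mathcal{E}(\rho)\lesssim\rho^{-\beta}$. Feeding this back shows $\eps(\rho_j)\lesssim\rho_j^{-\gamma}$ for a fixed $\gamma>0$ (the weight term contributes $\rho^{2a-2}$, the nonlinear term $\rho^{-\beta(p-1)/2}$), so the telescoping product $\mathcal{E}(\rho_j)\le\prod_{i<j}\eps(\rho_i)\,\mathcal{E}(\rho_0)\lesssim 4^{-\gamma j(j-1)/2}$ decays faster than any power of $\rho_j$. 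This is the claimed super-polynomial decay of $\mathcal{E}$. Translating to shells gives \eqref{498}: $\int_{|x|\sim 2^k}\phi^2\le\mathcal{E}(2^{k-1})$ directly, while $\int_{|x|\sim 2^k}|\nabla\phi|^2\le C2^{-2ak}\int_{|x|\sim 2^k}|x|^{2a}|\nabla\phi|^2\le C2^{-2ak}\mathcal{E}(2^{k-1})$, so both are $\le C_N2^{-kN}$ for every $N$.

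The conceptual engine is the gain $R^{2a-2}$ from the degenerate weight. The main technical obstacle is controlling the nonlinearity: one must organize the absorption so the recursion closes and then bootstrap the crude polynomial rate up to super-polynomial decay, rather than settling for the geometric estimate. A secondary point requiring care is the justification of the integration by parts (vanishing boundary flux) for the merely distributional $H^{1,a}$ solution, which I would settle by truncating at $|x|=S$ and letting $S\to\infty$ along a sequence on which the weighted flux tends to zero.
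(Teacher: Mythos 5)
Your argument is correct, and it runs on the same engine as the paper's proof: a Caccioppoli-type energy identity obtained by testing \eqref{120} against $\phi$ times the square of a cutoff localized to large $|x|$, with the degenerate weight supplying the decisive gain $R^{2a-2}$ (the paper's $2^{-2k(1-a)}$) on the cutoff-derivative term, followed by a bootstrap. The two places where you genuinely diverge are worth noting. First, the nonlinear term: the paper estimates $\int \phi^p\phi_k\chi_k$ by H\"older plus the \emph{unweighted} Gagliardo--Nirenberg interpolation $\|\phi_k\|_{L^q}\lesssim \|\nabla\phi_k\|_{L^2}^{1-\theta}$, extracting a second decay factor $2^{-ka(1-\theta)}$ from the bound $\|\nabla\phi_k\|_{L^2}\lesssim 2^{-ak}$ that the weight forces on exterior shells; you instead apply the Caffarelli--Kohn--Nirenberg inequality directly to the truncated function and absorb the nonlinearity using only the qualitative smallness of the tail energy, writing $\mathcal{E}^{(p+1)/2}=\mathcal{E}^{(p-1)/2}\cdot\mathcal{E}=o(\mathcal{E})$. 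Second, the iteration: the paper re-runs the estimate $N$ times, each pass adding a fixed increment to the polynomial rate, whereas you close a single recursion $\mathcal{E}(2R)\le \eps(R)\mathcal{E}(R/2)$ and telescope, so that once $\eps(\rho_j)\lesssim \rho_j^{-\gamma}$ the product $\prod_j \eps(\rho_j)$ delivers super-polynomial decay in one shot. Your packaging is cleaner (the recursion is explicit and the $N$-dependence of the constants is transparent), at the mild cost of needing the preliminary geometric-decay step to seed the quantitative bound on $\eps$; the paper's version avoids that seeding step because the weight already hands it the quantitative factor $2^{-ka(1-\theta)}$ on the first pass. Both routes legitimately reduce the boundary-term/justification issue to testing with functions in the form domain $H^{1,a}$, as you indicate at the end.
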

\begin{proof}
	Again, we start with 
	$$
	\|\phi\|_{L^r}\leq C_r \|\phi\|_{H^{1,a}}, \ \ 2\leq r<p^*=\f{2d}{d-2(1-a)}. 
	$$
	Recall 
	$$
	2a|x|^{2a-2}\sum_{j}x_{j}\partial_{j}\phi+|x|^{2a}\Delta\phi-\phi+\phi^{p}=0.
	$$
	Multiplying throughout with a cut of funciton $\chi_{k}:=\chi(x/2^{k})$,
	where $\chi$ is smooth and supported in $|x|<2$ with $\chi=1$ for
	$|x|<1$, and denoting $\phi_k=\phi\chi_{k}$,  we obtain the
	equation
	
	\begin{equation}
		\label{900} 
		\phi_{k}-|x|^{2a}\Delta\phi_{k}=\phi^{p}\chi_{k}+2a|x|^{2a-2}\sum_{j}x_{j}\partial_{j}\phi\chi_{k}-2|x|^{2a}\sum_{j}\partial_{j}\phi(\chi_{k})_{j}-|x|^{2a}\phi\Delta\chi_{k}.
	\end{equation}
	Multiply  \ref{900} by $\phi_{k}$ and integrating
	each term, we get the following 
	\begin{align*}
		-\int|x|^{2a}\Delta\phi_{k}\phi_{k}\ dx & =\int|x|^{2a}|\nabla\phi_{k}|^{2}\ dx+a\sum_{j}\int\partial_{j}(\phi_{k}^{2})x_{j}|x|^{2a-2}\ dx\\
		& =\ \int|x|^{2a}|\nabla\phi_{k}|^{2}\ dx+a\sum_{j}\int\partial_{j}(\phi^{2})x_{j}|x|^{2a-2}\chi_{k}^{2}+a\sum_{j}\int\phi^{2}x_{j}|x|^{2a-2}\partial_{j}(\chi_{k}^{2})
	\end{align*}
	By virtue of  the inequality $|\partial_{j}\chi_{k}|\leq C2^{-k}(\chi_{k-1}+\chi_{k}+\chi_{k+1})$, we get
	\begin{eqnarray*}
		& & 	-\int2|x|^{2a}\sum_{j}\partial_{j}\phi(\chi_{k})_{j}\phi_{k}\ dx  = -\frac{1}{2}\sum_{j}\int|x|^{2a}\partial_{j}(\phi^{2})\partial_{j}(\chi_{k}^{2})=\\
		& = &\frac{1}{2}\sum_{j}\int|x|^{2a}\phi^{2}\partial_{j}^{2}(\chi_{k}^{2})+a\sum_{j}\int x_{j}|x|^{2a-2}\phi^{2}\partial_{j}(\chi_{k}^{2})=\\
		& = & \sum_{j}\int|x|^{2a}\phi^{2}\partial_{j}^{2}(\chi_{k})\chi_{k}+\sum_{j}\int|x|^{2a}\phi^{2}(\partial_{j}\chi_{k})^{2}+a\sum_{j}\int x_{j}|x|^{2a-2}\phi^{2}\partial_{j}(\chi_{k}^{2})
	\end{eqnarray*}
	Combining everything, we get the equation,
	\begin{equation}
		\label{910}
		\int\phi_{k}^{2}\ dx+\int|x|^{2a}|\nabla\phi_{k}|^{2}\ dx=\int\phi^{p}\phi_{k}\chi_{k}\ dx+\int|x|^{2a}\phi^{2}|\nabla\chi_{k}|^{2}\ dx
	\end{equation}
	As $\phi\in H^{1,a}$, we obtain an apriori bound on the left
	hand side of equation \eqref{910} 
	$$
	\int\phi_{k}^{2}\ dx+\int|x|^{2a}|\nabla\phi_{k}|^{2}\ dx\leq C. 
	$$
	In particular, by the support considerations, 
	\begin{equation}
		\label{920} 
		\|\nabla \phi_k\|_{L^2}\leq C 2^{- a k}, \|\phi_k\|_{L^2}\leq C. 
	\end{equation}
	Fix $q: \f{1}{2}-\f{1-a}{d}>\f{1}{q}>\f{d-2}{2d}$ or equivalently $\f{2d}{d-2}>q>\f{2d}{d-2(1-a)}$. Then, one can check 
	$$
	p\f{q}{q-1}< p \f{2d}{d+2(1-a)}\leq \f{d+2(1-a)}{d-2(1-a)} \f{2d}{d+2(1-a)}=p^*.
	$$
	Let $\theta\in (0,1): \theta=d\left(\f{1}{q}-\f{d-2}{2d}\right)$, so that 
	\begin{equation}
		\label{940} 
		\f{1}{q}= \f{\theta}{2}+(1-\theta) \f{d-2}{2d}.
	\end{equation}
	By H\"older's, it follows 
	\begin{equation}
		\int|\phi^{p}\phi_{k}\chi_{k}|\ dx\leq\|\phi^{p}\|_{L^{\f{q}{q-1}}}\|\phi_{k}\|_{L^q}\leq\|\phi\|_{L^{\f{pq}{q-1}}}^p\|\phi_{k}\|_{L^2}^{\theta}\|\phi_{k}\|_{L^{\frac{2d}{d-2}}}^{(1-\theta)}\leq C\|\nabla\phi_{k}\|_{2}^{1-\theta}\lesssim2^{-ka(1-\theta)},\label{eq:inequ1}
	\end{equation}
	where we have used the relation \eqref{940} in conjunction to the Gagliardo-Nirenberg's inequality. Also we have by compact support of smooth
	function $\chi$, 
	\begin{equation}
		\label{950} 
		\int|x|^{2a}\phi^{2}|\nabla\chi_{k}|^{2}\ dx\lesssim2^{-2k(1-a)}
	\end{equation}
	Combining these two inequalities, we get the new bound, 
	\begin{equation}
		\label{960}
		\int\phi_{k}^{2}\ dx+\int|x|^{2a}|\nabla\phi_{k}|^{2}\ dx\lesssim 2^{-k \min(a(1-\theta), 2(1-a))}. 
	\end{equation}
	Continuing in this way, by using the improved version \eqref{960}, instead of \eqref{920}, we get an improvement and so on. In the end, we get
	an inequality, 
	$$
	\int\phi_{k}^{2}\ dx+\int|x|^{2a}|\nabla\phi_{k}|^{2}\ dx\leq C_N  2^{-N k \min(a(1-\theta), 2(1-a))}. 
	$$
\end{proof}
It is immediately clear rom H\"older's that the estimates \eqref{498} can be extended to 
$$
\int_{|x|\sim 2^k} \phi_k+ |\nabla \phi_k| \leq C_N 2^{-kN}.
$$
It is also clear from the {\it a priori} control $\|\phi\|_{L^r}\leq \|\phi\|_{H^{1,a}}$, $2\leq r<p^*$ and the Gagliardo-Nirenberg's, that one can extend them to $W^{1,r}, 1\leq r<p^*$. We formulate this in the following corollary. 
\begin{corollary}
	\label{cor:94} 
	Under the assumptions made in Proposition \ref{app:pr10}, we have for any integer $k$, 
	\begin{equation}
		\label{q:15} 
			\|\phi\|_{W^{1,r}(|x|\sim N)}\leq C_k N^{-k}, 1\leq r<p^*.
	\end{equation}
\end{corollary}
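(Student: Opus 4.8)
The plan is to upgrade the $L^2$-based annular decay \eqref{498} of Proposition \ref{app:pr10} to full $W^{1,r}$-decay, exploiting that the power in \eqref{498} is \emph{arbitrary}: writing $A_k:=\{x:\,|x|\sim 2^k\}$ (so $|A_k|\sim 2^{kd}$), estimate \eqref{498} supplies, for every power $M$, a bound $\|\phi\|_{L^2(A_k)}+\|\nabla\phi\|_{L^2(A_k)}\lesssim 2^{-kM}$. Since the corollary's scale $N$ is comparable to $2^k$, proving annular $W^{1,r}$-decay faster than any power of $2^k$ is exactly \eqref{q:15}, and \emph{any} polynomial-in-$2^k$ loss we incur along the way (from H\"older measure factors or from constants in local elliptic estimates) is harmless, being absorbed by enlarging $M$ at the end. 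I treat the two ranges $1\le r\le 2$ and $2<r<p^*$ separately, and in each I bound both $\phi$ and $\nabla\phi$.

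For $1\le r\le 2$ the argument is pure H\"older on the finite-measure annulus: $\|\phi\|_{L^r(A_k)}\le |A_k|^{\f1r-\f12}\|\phi\|_{L^2(A_k)}\lesssim 2^{kd(\f1r-\f12)}\|\phi\|_{L^2(A_k)}$, and identically for $\nabla\phi$; combining with \eqref{498} (applied with power $M+d$) gives the claim. For $2<r<p^*$, I first note that since $a<1$ we have $p^*=\f{2d}{d-2(1-a)}<\f{2d}{d-2}=:2^*$, so the whole range stays strictly subcritical and Gagliardo--Nirenberg applies (for $d=1,2$ one uses $H^1\hookrightarrow L^r$ for every finite $r$ instead). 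To bound $\phi$, I fix an annular cutoff $\psi_k$, equal to $1$ on $A_k$, supported in $A_{k-1}\cup A_k\cup A_{k+1}$, with $\|\nabla\psi_k\|_{L^\infty}\lesssim 2^{-k}$, and apply Gagliardo--Nirenberg to $g_k:=\phi\psi_k$:
$$
\|\phi\|_{L^r(A_k)}\le\|g_k\|_{L^r}\lesssim \|\nabla g_k\|_{L^2}^{\theta}\,\|g_k\|_{L^2}^{1-\theta},\qquad \theta=d\left(\f12-\f1r\right)\in(0,1),
$$
and since $\nabla g_k=\psi_k\nabla\phi+\phi\nabla\psi_k$ with $\|\nabla\psi_k\|_{L^\infty}\lesssim 2^{-k}$, both right-hand factors are controlled by the super-polynomially small quantities in \eqref{498}.

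The only genuinely new step is controlling $\nabla\phi$ in $L^r$ for $r>2$, which requires second-order information. Using \eqref{120} I write, on $A_k$, $\De\phi=|x|^{-2a}(\phi-\phi^{p})-2a\,|x|^{-2}\,x\cdot\nabla\phi$, where $|x|^{-2a}\sim 2^{-2ak}$ and $|x|^{-1}\sim 2^{-k}$; since $\phi\in L^\infty(A_k)$ for $k$ large (Proposition \ref{prop:16}) one has $\|\phi^{p}\|_{L^2(A_k)}\lesssim\|\phi\|_{L^2(A_k)}$, so $\|\De\phi\|_{L^2(A_k)}$ again decays faster than any power of $2^k$. A standard interior ($H^2$) elliptic estimate on the enlarged annulus $A_{k-1}\cup A_k\cup A_{k+1}$ — where $\De$ is uniformly elliptic — bounds $\|\nabla^2\phi\|_{L^2(A_k)}\lesssim \|\De\phi\|_{L^2}+\|\phi\|_{L^2}$ over the enlarged annulus, with a constant growing at most polynomially in $2^k$. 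Feeding this into Gagliardo--Nirenberg for $\nabla\phi$,
$$
\|\nabla\phi\|_{L^r(A_k)}\lesssim \|\nabla^2\phi\|_{L^2(A_k)}^{\theta}\,\|\nabla\phi\|_{L^2(A_k)}^{1-\theta},
$$
gives the remaining gradient decay.

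I expect the $L^r$ gradient bound for $r>2$ to be the main obstacle, since it cannot be read off by interpolating the $H^1$-annular decay alone and forces one back into the equation to extract $\|\nabla^2\phi\|_{L^2}$ via local elliptic regularity. The redeeming feature I would stress is that every loss in this step is only a \emph{power} of $2^k$, whereas \eqref{498} furnishes decay $2^{-kM}$ for \emph{every} $M$; choosing $M$ large at the end closes the argument, and the identical scheme iterates to any $W^{m,r}$ should higher-order versions be needed.
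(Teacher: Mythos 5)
Your proposal is correct, and for the part that actually requires work it is more complete than the paper's own justification. The paper disposes of the corollary in one sentence: H\"older on the finite-measure annulus handles $1\le r\le 2$, and for $2<r<p^*$ it invokes the a priori bound $\|\phi\|_{L^r}\leq C\|\phi\|_{H^{1,a}}$ together with Gagliardo--Nirenberg --- i.e.\ exactly your interpolation of the super-polynomially small $L^2$ annular norm against a globally bounded higher norm, with the arbitrary power in \eqref{498} absorbing the fixed interpolation exponent. Your treatment of $\|\phi\|_{L^r(A_k)}$ is therefore the same argument, made explicit with the cutoff $\psi_k$. Where you genuinely depart from (and improve on) the paper is the gradient for $r>2$: the paper's cited a priori control is only for $\phi$ itself, and no mechanism for $\|\nabla\phi\|_{L^r(A_k)}$, $r>2$, is indicated. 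Your route --- read $\De\phi$ off the equation on the annulus, get super-polynomial decay of $\|\De\phi\|_{L^2(A_k)}$, apply an interior $H^2$ estimate on the enlarged annulus, then interpolate --- is sound and fills that gap; it is the same ``return to the equation'' philosophy the paper itself uses later in Proposition \ref{app:pr20}.

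One soft spot to make explicit: when you write $\|\phi^{p}\|_{L^2(A_k)}\lesssim\|\phi\|_{L^2(A_k)}$ you are using $\|\phi\|_{L^\infty(A_k)}^{p-1}$ as a constant, and Proposition \ref{prop:16} (see the remark following it) only provides $L^\infty$ bounds with a constant $C_\eps$ that depends badly on the cutoff scale; since pointwise decay \eqref{1000} is proved \emph{after} this corollary and uses it, you cannot borrow it here. The step survives because, tracking the scaling of the resolvent kernel of $(-\eps^{2a}\De+1)^{-1}$ and of the cutoff derivatives in the proof of Proposition \ref{prop:16}, one finds $C_\eps$ grows only polynomially in $\eps^{-1}\sim 2^{k}$, which your ``absorb any polynomial loss'' mechanism then handles --- but this needs to be said. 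Alternatively, measure $\De\phi$ in $L^{\rho}(A_k)$ for $\rho\in\left(1,\min\left(2,\f{p^*}{p}\right)\right)$ so that $\|\phi^{p}\|_{L^{\rho}(A_k)}=\|\phi\|_{L^{p\rho}(A_k)}^{p}$ with $p\rho<p^*$ is already controlled by the first part of your argument, and run the elliptic estimate and interpolation at the $L^{\rho}$ level; this avoids $L^\infty$ altogether.
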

Next, we further expand the decay of the $L^2$ norm to any norm $L^q, 1\leq q\leq \infty$. Obviously, we already have that for $q<p^*$, so now we concentrate on the case $q>p^*$. 
\begin{proposition}
	\label{app:pr20} 
	Let $d\geq 1, a\in (0,1), p>1$ satisfy \eqref{527}. Let also $\phi$ be a positive   $C^\infty(|x|>1) \cap H^{1,a}(\rd)$ solution of \eqref{120}.
	Then, for each $k\geq 0$, and for each  integer $N\in \cn$, there exists $C_k$, so that  for all $|x|>1$, 
	\begin{equation}
		\label{1000} 
		\phi(x)\leq C_N |x|^{-N}. 
	\end{equation}
\end{proposition}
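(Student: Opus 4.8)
I need to upgrade the $L^2$-type decay on annuli $\{|x|\sim 2^k\}$ from Proposition \ref{app:pr10} (and its $W^{1,r}$ extension in Corollary \ref{cor:94}) to a genuine pointwise bound $\phi(x)\leq C_N|x|^{-N}$ for $|x|>1$. The plan is a local elliptic regularity / bootstrapping argument on each annulus, combined with Sobolev embedding to pass from integral decay to pointwise decay. The key observation is that on the region $|x|>1$ the operator $-\nabla\cdot(|x|^{2a}\nabla\cdot)$ is uniformly elliptic with smooth coefficients comparable to $|x|^{2a}\sim 2^{2ak}$ on the annulus $|x|\sim 2^k$, so after rescaling each annulus to unit size, standard Calder\'on-Zygmund and Sobolev estimates apply with constants uniform in $k$.

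\textbf{Main steps.} First I would fix $k$ and localize by the cutoff $\chi_k$, working from the equation \eqref{120} rewritten (as in \eqref{900}) for $\phi_k=\phi\chi_k$. On the support of $\chi_k$ we have $|x|\sim 2^k$, so I rescale variables $x=2^k y$, turning the annulus into a fixed unit annulus $A=\{|y|\sim 1\}$ and normalizing the coefficient $|x|^{2a}$ to be comparable to $1$. In these rescaled coordinates the rescaled profile $\psi_k(y):=\phi(2^k y)$ solves an elliptic equation of the form $-\Delta\psi_k+c_k\psi_k = F_k$ with coefficients bounded above and below independently of $k$, and the right-hand side $F_k$ controlled by $\psi_k^p$ together with lower-order terms carrying explicit powers of $2^{-k}$. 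The decay estimates \eqref{498} and \eqref{q:15} translate into the statement that $\|\psi_k\|_{W^{1,r}(A)}\leq C_N\,2^{-kN}$ (after accounting for the Jacobian factors $2^{kd}$, which only shift $N$), uniformly in $k$, for every $1\leq r<p^*$ and every $N$.

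\textbf{Bootstrap to $L^\infty$.} With uniform-in-$k$ elliptic estimates on the fixed domain $A$, I would run the standard iteration: start from $\psi_k\in W^{1,r}(A)$, feed into the equation to gain two derivatives via $L^p$-elliptic regularity on the interior of $A$ (using a slightly larger annulus and a further cutoff to avoid boundary terms), obtaining $\psi_k\in W^{2,r}$, then Sobolev-embed to raise the integrability exponent, and repeat. Because $p>1$ the nonlinearity $\psi_k^p$ is handled once $\psi_k$ is bounded in a high enough $L^r$; since $\phi\in L^\infty$ away from the origin (Proposition \ref{prop:16}) this causes no trouble. After finitely many steps one reaches $\|\psi_k\|_{L^\infty(A')}\leq C\|\psi_k\|_{W^{1,r}(A)}\lesssim 2^{-kN}$ on a slightly smaller annulus $A'$, by Sobolev embedding $W^{m,r}\hookrightarrow L^\infty$ for $mr>d$. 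Undoing the rescaling gives $\sup_{|x|\sim 2^k}\phi(x)\leq C_N\,2^{-kN}$, and since $N$ is arbitrary and the dyadic annuli cover $\{|x|>1\}$, choosing $k\sim\log_2|x|$ yields $\phi(x)\leq C_N|x|^{-N}$, which is \eqref{1000}.

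\textbf{Expected obstacle.} The delicate point is ensuring that every constant in the elliptic estimates is genuinely uniform in the dyadic scale $k$; this is exactly what the rescaling to the fixed annulus $A$ is designed to guarantee, so the care lies in tracking how each term in \eqref{900}—in particular the first-order term $2a|x|^{2a-2}\sum_j x_j\partial_j\phi$ and the commutator terms involving $\nabla\chi_k$ and $\Delta\chi_k$—rescales, and in verifying that each contributes a favorable power of $2^{-k}$ rather than degrading the decay. A secondary technical nuisance is the nonlinear term $\psi_k^p$: one must confirm that raising to the power $p$ preserves the "faster than any polynomial" decay, which follows because if $\psi_k$ decays like $2^{-kN}$ then $\psi_k^p$ decays at least as fast (using the uniform $L^\infty$ bound to absorb the other $p-1$ factors), so no loss occurs and the bootstrap closes at arbitrary order $N$.
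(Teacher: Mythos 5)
Your overall strategy --- rescale each dyadic annulus $\{|x|\sim 2^k\}$ to a fixed unit annulus, observe that the operator becomes uniformly elliptic there, and run a local elliptic-regularity/Sobolev bootstrap with constants uniform in $k$ --- is a legitimate and genuinely different route from the paper's. The paper never rescales: it runs a Moser-type iteration directly, multiplying the localized equation \eqref{900} by $\phi_k^{q-1}$ to get an energy estimate for $\phi_k^{q/2}$, then Sobolev-embedding to raise the integrability exponent via the recursion $p_{n+1}=\frac{d}{d-2}(p_n-p+1)$, and only at the very end invoking $L^r$ bounds on $\Delta\phi_k$ plus Sobolev embedding to reach $L^\infty$. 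Your $W^{2,r}$-based scheme would, if completed, accomplish the same thing, and the rescaling is a clean way to see why the constants are uniform in $k$; polynomial-in-$2^k$ losses from the Jacobian and from the zeroth-order coefficient (which in fact grows like $2^{2k(1-a)}$ after rescaling, not a favorable power of $2^{-k}$ as you suggest) are indeed harmless against super-polynomial decay.

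However, there is a genuine gap in your treatment of the nonlinearity, which is the crux of the whole proposition. You dismiss $\psi_k^p$ by asserting that ``$\phi\in L^\infty$ away from the origin (Proposition \ref{prop:16})'' makes it harmless. Proposition \ref{prop:16} gives only $\phi\in C^\infty(\rd\setminus\{0\})$ with cutoff-dependent constants $C_\eps$ on compact sets $\eps<|x|<2\eps^{-1}$; its accompanying remark explicitly warns that no decay or uniform bound as $|x|\to\infty$ is implied. A uniform bound $\|\phi\|_{L^\infty(|x|>1)}<\infty$ is not available a priori --- it is (a weak form of) what Proposition \ref{app:pr20} is proving, so invoking it is circular. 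Without it, each pass through the equation sends $L^r$ data to $L^{r/p}$ data on the right-hand side, and the gain of two derivatives yields $L^{r'}$ with $\frac{1}{r'}=\frac{p}{r}-\frac{2}{d}$; this improves $r$ only when $r>\frac{(p-1)d}{2}$, and the iteration reaches $L^\infty$ only because the admissible starting exponent satisfies $p^*>\frac{(p-1)d}{2}$, which is exactly the consequence of \eqref{527} that the paper isolates and verifies before choosing $p_0\in\left(\max\left(\frac{(p-1)d}{2},p\right),p^*\right)$. Your proposal never checks this numerology, so the claim that ``after finitely many steps one reaches $L^\infty$'' is unsupported as written. Supplying this verification (or running the exponent bookkeeping explicitly, as the paper does) would close the argument.
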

\begin{proof}
	We only consider the case $d\geq 3$, as the cases $d=1,2$ are already covered by  Sobolev embedding and Corollary \ref{cor:94}. Next, 
	we make  the observation that due to \eqref{527}, we have 
	$$
	(p-1) \f{d}{2}<\f{2 d(1-a)}{d-2(1-a)}<p^*.
	$$
	Thus, fix  $p_0\in \left(\max((p-1)\f{d}{2}, p), p^*\right)$. 
	
	Our starting point for this is an energy estimate for \eqref{900}. Specifically, divide the equation \eqref{900} by $|x|^{2a}$. Take a dot product of the resulting identity with $\phi_k^{q-1}$, with $q>2$ to be described shortly. After integration by parts, we have a very crude estimate\footnote{where we have gave up favorable powers of $2^k$ on the right-hand side etc.}  in the form 
	\begin{equation}
		\label{970} 
		\f{4(q-1)}{q^2} \int |\nabla \phi^{\f{q}{2}}|^2 \leq C \left(\int_{|x|\sim 2^k}  \phi^{p+q-1} dx+\int_{|x|\sim 2^k} \phi^q\right). 
	\end{equation}
	As long as $q: p+q-1=p_0<p^*$, we have a control by $2^{-Nk}$ for any $N$ on the right hand side, by Corollary \ref{cor:94}, whence 
	$
	\|\nabla \phi_k^{\f{q}{2}}\|_{L^2}<C_k 2^{-Nk}. 
	$
	By Sobolev embedding, we obtain 
	$$
	\|\phi_k\|_{L^{\f{qd}{d-2}}}\leq C_k 2^{-Nk}. 
	$$
	It follows that we are able to improve the integrability of $\phi_k$ (with estimates $C_k 2^{-Nk}$) from $L^{p_0}$ to $L^{p_1}: p_1:=\f{qd}{d-2}=\f{d}{d-2}(p_0-p+1)$. Note 
	$$
	p_1-p_0=\f{d}{d-2}(p_0-p+1)-p_0=\f{1}{d-2}(2 p_0 - d(p-1))>0,
	$$
	by the choice of $p_0$. 
	
	We can of course iterate this argument, as follows - each time we have the estimate $\|\phi_k\|_{L^{p_{n}}}\leq C_k 2^{-Nk}$, we can improve it to 
	$\|\phi_k\|_{L^{p_{n+1}}}\leq C_k 2^{-Nk}$, where 
	\begin{equation}
		\label{l:40}
		p_{n+1}=\f{d}{d-2}\left(p_{n}-p+1\right). 
	\end{equation}
	We can write a formula for the recursive relation \eqref{l:40} displayed herein, namely, 
	$$
	p_n=\f{(p-1) d}{2}+ \left(\f{d}{d-2}\right)^n (p_0-\f{(p-1) d}{2}). 
	$$
	By the choice of $p_0$ (note $p_0>\f{(p-1) d}{2}$),  $\lim_n p_n=+\infty$, whence $\|\phi_k\|_{L^r}\leq C_k 2^{-Nk}$ for all $1\leq r<\infty$. 
	
	Going back to \eqref{900}, we can now immediately obtain similar estimates for $\|\De \phi_k\|_{L^r}\leq C_k 2^{-Nk}$ for any $1\leq r<\infty$. Sobolev embedding then implies \eqref{1000}. 
\end{proof}

\subsection{$L^2$ exponential decay at infinity}
Now that we have established that the wave $\phi$ decays at infinity, see \eqref{1000}, we proceed to upgrade this result to exponential decay. The standard result in this direction is the classical Agmon's argument, which we mimick herein. The first step is such a decay, but in $L^2$ sense. 
\begin{proposition}
	\label{prop:18} 
	Let $\phi$ be a wave satisfying \eqref{120}. Then, there exist constants $C, \de$, so that 
	\begin{equation}
		\label{1010} 
		\int_{\rd} e^{\de |x|^{1-a}} \phi^2(x) dx\leq C. 
	\end{equation}
\end{proposition}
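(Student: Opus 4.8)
The plan is to run the classical Agmon exponential-decay argument, the key point being that the correct weight adapted to the degenerate operator $-\nabla\cdot(|x|^{2a}\nabla\,\cdot\,)$ is exactly $e^{\de|x|^{1-a}}$. Indeed, setting $g(x)=\de|x|^{1-a}$ one computes $\nabla g=\de(1-a)|x|^{-a}\f{x}{|x|}$, so that
$$|x|^{2a}|\nabla g(x)|^2=\de^2(1-a)^2$$
is a \emph{constant}, which can be made as small as we wish by choosing $\de$ small. Since we do not yet know that $e^{g}\phi$ is square integrable, I would first work with the bounded regularization $g_\eps(x):=\f{\de|x|^{1-a}}{1+\eps|x|^{1-a}}$, $\eps>0$, which increases monotonically to $g$ as $\eps\to 0^+$ and satisfies the same uniform bound $|x|^{2a}|\nabla g_\eps(x)|^2\leq \de^2(1-a)^2$.

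For fixed $\eps>0$ the weight $e^{g_\eps}$ is bounded with $|x|^a|\nabla g_\eps|$ bounded, so $v:=e^{2g_\eps}\phi$ lies in $H^{1,a}(\rd)$ (see \eqref{h1a}) and is an admissible test function in the weak form of \eqref{120}, namely $\int|x|^{2a}\nabla\phi\cdot\nabla v+\om\int\phi v=\int\phi^{p}v$ (here $\phi>0$). Writing $w:=e^{g_\eps}\phi$ and using the ground-state substitution $e^{g_\eps}\nabla\phi=\nabla w-w\nabla g_\eps$, all first-order cross terms cancel and one is left with the clean identity
$$\int_{\rd}|x|^{2a}|\nabla w|^2\,dx+\int_{\rd}\big(\om-|x|^{2a}|\nabla g_\eps|^2-\phi^{p-1}\big)w^2\,dx=0.$$
Discarding the nonnegative gradient term leaves $\int_{\rd}(\om-|x|^{2a}|\nabla g_\eps|^2-\phi^{p-1})w^2\,dx\leq 0$.

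To conclude, I would feed in the already-established polynomial decay from Proposition \ref{app:pr20} (estimate \eqref{1000}): fix $R_0$ so large that $\phi^{p-1}(x)<\om/4$ for $|x|>R_0$, and choose $\de$ so small that $\de^2(1-a)^2<\om/4$. Then on $|x|>R_0$ the coefficient of $w^2$ is at least $\om/2$, while on the ball $|x|\leq R_0$ the coefficient is bounded (recall $\phi\in L^\infty$) and the weight $e^{2g_\eps}\leq e^{2\de R_0^{1-a}}$ is bounded there. Splitting the integral accordingly gives
$$\f{\om}{2}\int_{|x|>R_0}w^2\,dx\leq \int_{|x|\leq R_0}\big(|x|^{2a}|\nabla g_\eps|^2+\phi^{p-1}\big)w^2\,dx\leq C\,e^{2\de R_0^{1-a}}\int_{|x|\leq R_0}\phi^2\,dx\leq C',$$
a bound uniform in $\eps$. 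Hence $\int_{\rd}e^{2g_\eps}\phi^2\,dx\leq C''$ uniformly, and letting $\eps\to 0^+$ the monotone convergence theorem yields $\int_{\rd}e^{2\de|x|^{1-a}}\phi^2\,dx\leq C''$, which is \eqref{1010} after relabeling $\de$.

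The main obstacle is not any single estimate but the correct setup: recognizing $|x|^{1-a}$ as the Agmon weight for which $|x|^{2a}|\nabla g|^2$ stays bounded, and carefully justifying that $e^{2g_\eps}\phi$ is a legitimate $H^{1,a}$ test function together with the integration by parts producing the identity above. The degeneracy at the origin is harmless, since $g_\eps\to 0$ there and the weight remains bounded on compact sets; the decay input \eqref{1000} is precisely what powers the absorption of the $\phi^{p-1}$ term in the region $|x|>R_0$.
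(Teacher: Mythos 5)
Your proposal is correct and follows essentially the same route as the paper: both run the Agmon argument with the weight $\de|x|^{1-a}$ (chosen precisely so that $|x|^{2a}|\nabla g|^2$ is a small constant), both regularize it to a bounded weight ($g_\eps=g/(1+\eps g)$ versus the paper's $f_\al=f/(1+\al f)$), and both use the polynomial decay of Proposition \ref{app:pr20} to identify the exterior region where $\om-\phi^{p-1}-|x|^{2a}|\nabla g|^2\geq\om/2$. The only difference is organizational: the paper localizes with a cutoff $\eta$ and packages the computation into two lemmas, whereas you test globally with $e^{2g_\eps}\phi$ and split the resulting integral into $|x|\le R_0$ and $|x|>R_0$ — the substance is the same.
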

We provide the proof of the Proposition, in a few steps. 
We follow the approach in \cite{HS}. We begin with the following lemma, which applies to general second order divergence form operators, similar to  $\cl_\pm$. 
\subsubsection{First technical lemma}
	\begin{lemma}
		\label{v:1}
		Let $V:\rd\to \rone$, $V(x)>0$,  so that $\lim_{|x|\to \infty}=0$, and denote $H=-\nabla \cdot (|x|^{2a}  \nabla ) - V$.  Let $\tilde{V}(s):=\sup_{|x|=s} V(x)$ and consider the set $\cf:=\{x: \tilde{V} (x)<\f{\om}{2}\}$, and its compact complement 
		$\ca:=\cf^{c}=\{x: \tilde{V}(x)\geq \f{\om}{2}\}$. 
		
		Let $\de>0$ and $f:\rd\to \rone$ be a radial function, with 
		\begin{equation}
			\label{v:30}
				|f'(\rho)|\leq  (1-\de) \f{\sqrt{(\om-\tilde{V}(\rho))_+}}{\rho^a}. 
		\end{equation}
	Then, 	there is the estimate 
		\begin{equation}
			\label{v:20}
			\langle e^{f}\psi,(H+\om) e^{-f}\psi\rangle \geq  
			\f{\de \om}{2} \|\psi\|_{2}^{L^2}.
		\end{equation}
	\end{lemma}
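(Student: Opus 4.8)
The plan is to convert the bilinear form into a weighted Dirichlet integral plus a potential term through the Agmon (ground-state) substitution, and then to bound the resulting effective potential region by region. Since $f$ is real, $e^{\pm f}$ are self-adjoint multiplication operators, so it suffices to treat the real quadratic form $\dpr{\psi}{e^{f}(H+\om)e^{-f}\psi}$; splitting into real and imaginary parts I may assume $\psi$ is real-valued. Expanding $-\nabla\cdot(|x|^{2a}\nabla(e^{-f}\psi))$ and conjugating, the operator $e^{f}(H+\om)e^{-f}$ produces the kinetic part $-\nabla\cdot(|x|^{2a}\nabla\psi)$, a first-order term $2|x|^{2a}\nabla f\cdot\nabla\psi$, and zeroth-order multiplication terms $\nabla\cdot(|x|^{2a}\nabla f)\,\psi-|x|^{2a}|\nabla f|^{2}\psi+(\om-V)\psi$.

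The decisive algebraic point is that, when paired against $\psi$, the first-order term integrates by parts as $\int|x|^{2a}\nabla f\cdot\nabla(\psi^{2})=-\int\nabla\cdot(|x|^{2a}\nabla f)\,\psi^{2}$, which exactly cancels the zeroth-order contribution $\int\nabla\cdot(|x|^{2a}\nabla f)\,\psi^{2}$. Using that $f$ is radial, so $|\nabla f|=|f'(|x|)|$, what survives is the clean Agmon identity
\begin{equation*}
\dpr{e^{f}\psi}{(H+\om)e^{-f}\psi}=\int_{\rd}|x|^{2a}|\nabla\psi|^{2}\,dx+\int_{\rd}\bigl(\om-V-|x|^{2a}|f'(|x|)|^{2}\bigr)|\psi|^{2}\,dx.
\end{equation*}

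I would then discard the nonnegative Dirichlet term and estimate the effective potential $W:=\om-V-|x|^{2a}|f'|^{2}$ pointwise. On the far set $\cf=\{\tV<\om/2\}$ the hypothesis \eqref{v:30} gives $|x|^{2a}|f'|^{2}\le(1-\de)^{2}(\om-\tV)_{+}$, and since $V\le\tV<\om/2$ there, $W\ge(1-(1-\de)^{2})(\om-\tV)\ge\de\,(\om-\tV)>\f{\de\om}{2}$, where I used $1-(1-\de)^{2}=\de(2-\de)\ge\de$ for $\de\in(0,1)$. Hence the integrand already dominates $\f{\de\om}{2}|\psi|^{2}$ on $\cf$.

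The main obstacle is the compact core $\ca=\{\tV\ge\om/2\}$, where $V$ may exceed $\om$, so $W$ can be negative and the naive pointwise bound fails. On $\ca$ the weight is harmless, $|x|^{2a}|f'|^{2}\le(1-\de)^{2}(\om-\tV)_{+}\le(1-\de)^{2}\om/2$, so the only dangerous contribution is $-V$; controlling it is the delicate step and forces one to \emph{retain}, rather than discard, the weighted Dirichlet energy $\int_{\ca}|x|^{2a}|\nabla\psi|^{2}$ over this bounded region, in tandem with the a priori $L^{\infty}$ bound on $V=\ka\Phi^{p-1}$ from Proposition \ref{prop:16} and the compactness of $\ca$ (equivalently, by arranging $f$ to be constant on $\{\tV\ge\om\}$, where \eqref{v:30} anyway forces $f'=0$, so the weight vanishes there). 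Once the core contribution is absorbed into the positive far-field and kinetic terms, adding the two regions yields the claimed lower bound \eqref{v:20}.
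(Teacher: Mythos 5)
Your Agmon identity and the far--field estimate are exactly the paper's computation: the cross term cancels the $\nabla\cdot(|x|^{2a}\nabla f)$ contribution, and on $\cf$ the hypothesis \eqref{v:30} gives $\om-V-|x|^{2a}|f'|^{2}\geq \de(2-\de)(\om-\tV)\geq\f{\de\om}{2}$. Up to that point the two arguments coincide.

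The gap is the compact core $\ca$, and it is not a technicality you can patch by ``absorbing'' terms. The paper's proof never faces this region: it uses the hypothesis $\mathrm{supp}\,\psi\subset\cf$ (stated explicitly in the proof, ``since $\mathrm{supp}\,\psi\subset\cf$\ldots'', though admittedly omitted from the lemma's statement), and in the application $\psi=\eta e^{f_{\alpha}}\phi$ with $\eta$ cut off away from the core, so the pointwise bound on $\cf$ is all that is needed. Without that restriction the estimate \eqref{v:20} is simply false, so no argument along the lines you sketch can close it: for $\psi$ concentrated where $V>\om$ one has $f'=0$ (by \eqref{v:30}) and $\om-V<0$, and the retained Dirichlet energy cannot restore the \emph{specific} lower bound $\f{\de\om}{2}\|\psi\|^{2}$ --- indeed in the intended application $H+\om$ is essentially $L_-$, which annihilates $\Phi$, so the form can be made arbitrarily close to $0$ relative to $\|\psi\|^{2}$. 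Your parenthetical fix (making $f$ constant on $\{\tV\geq\om\}$) removes only the $|x|^{2a}|f'|^{2}$ term, not the offending $-V$; and invoking an $L^{\infty}$ bound on $V=\ka\Phi^{p-1}$ is out of scope, since the lemma is stated for a general decaying $V>0$. The correct resolution is to add the hypothesis $\mathrm{supp}\,\psi\subset\cf$ and then your far--field computation already finishes the proof.
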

{\bf Remark:} Clearly, we can take 
\begin{equation}
	\label{v:15} 
	f(\rho):=(1-\de)\int_1^\rho \frac{\sqrt{(\om-\tilde{V}(s))_+}}{s^a} ds,
\end{equation}
which will satisfy \eqref{v:30}.  Note that as $\lim_{\rho\to \infty} \tilde{V}(\rho)=0$, we have that $f(\rho)\sim \rho^{1-a}, \rho>>1$. 
	\begin{proof}
		We calculate 
		\begin{eqnarray*}
		& & 	\langle e^{f}\psi,-\nabla\cdot(|x|^{2a}\nabla)e^{-f}\psi \rangle  =\langle\nabla e^{f}\psi,(|x|^{2a}\nabla e^{-f}\psi\rangle\\
			 &=& \langle e^{f}\nabla \psi+e^{f}\psi \nabla f,|x|^{2a} e^{-f}\nabla \psi  -|x|^{2a}e^{-f}\psi  \nabla f\rangle\\
			&=& \langle\nabla \psi ,|x|^{2a}\nabla \psi \rangle-\langle\nabla \psi ,|x|^{2a}\psi  \nabla f\rangle+\langle \psi  \nabla f,|x|^{2a}\nabla \psi  \rangle-\langle\psi  \nabla f,|x|^{2a}\psi  \nabla f\rangle=\\
			&=& \langle\nabla \psi,|x|^{2a}\nabla \psi \rangle - \int_{\rd} |x|^{2a} |\nabla f|^2 \psi^2.
		\end{eqnarray*}
	Combining everything, 
	\begin{align*}
		\langle e^{f} \psi ,(H+\om) e^{-f}\psi \rangle & =\langle\nabla \psi ,|x|^{2a}\nabla \psi \rangle-  \int_{\rd} |x|^{2a} |\nabla f|^2 \psi^2  -\langle \psi,V\psi \rangle+\om \|\psi\|^2 \\
		& \geq\langle \psi,(\om-V -|x|^{2a}|\nabla f|^{2})\psi\rangle.
	\end{align*}
We now estimate the last expression from below. Since $supp\  \psi\subset \cf$, we have that for $x\in \cf$, $\rho=|x|$, 
\begin{eqnarray*}
	\om-V(x) -|x|^{2a}|\nabla f|^{2}\geq 
	\om-\tilde{V}(\rho) -\rho^{2a}|f'(\rho)|^{2} \geq \de(\om-\tilde{V}(\rho))\geq \f{\de\om}{2}.
\end{eqnarray*}
Thus, 
$$
\langle e^{f} \psi ,(H+\om) e^{-f}\psi \rangle \geq \f{\de\om}{2} \|\psi\|^2_{L^2},
$$
as claimed. 
	\end{proof}
	We now proceed to the next technical  lemma. 
	\subsubsection{Second  technical lemma}
	\begin{lemma}
		\label{The-second-one.} 
		Suppose $(H+\om) \phi=0$ and $f$ is a bounded function. 
		 Let $\eta$ be a smooth
		bounded function with $\text{supp}\ |\nabla\eta|$ compact. Set $\psi:=\eta e^{f}\phi$. Then, 
		\begin{equation}
			\label{v:50}
			\langle e^{f}\psi,(H+\om) e^{-f}\psi\rangle=\langle\xi e^{2f}\phi,\phi\rangle,
		\end{equation}
		where $\xi=|\nabla\eta|^{2}+2 \eta (\nabla\eta\cdot\nabla f).$
	\end{lemma}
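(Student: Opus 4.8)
\emph{Proof plan.} First I would strip off the exponential conjugation. Since $e^{-f}\psi=\eta\phi$ and $e^{f}\psi=e^{2f}\eta\phi$, the quantity to be computed collapses to
\[
\dpr{e^{f}\psi}{(H+\om)e^{-f}\psi}=\dpr{e^{2f}\eta\phi}{(H+\om)(\eta\phi)} .
\]
Thus the weights $e^{\pm f}$ only survive as the single factor $e^{2f}$, and what remains is a localization (IMS-type) identity for the divergence-form operator $H=-\nabla\cdot(|x|^{2a}\nabla)-V$ applied to the cut-off $\eta\phi$.

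The key step is a commutator identity that invokes the equation $(H+\om)\phi=0$ at the outset. Expanding $-\nabla\cdot(|x|^{2a}\nabla(\eta\phi))$ and collecting the terms proportional to $\eta$, I would obtain
\[
(H+\om)(\eta\phi)=\eta\,(H+\om)\phi-|x|^{2a}\,\nabla\eta\cdot\nabla\phi-\nabla\cdot\big(|x|^{2a}\phi\,\nabla\eta\big),
\]
where the first term vanishes by hypothesis. Substituting this into the inner product and integrating by parts in the divergence term, I would expand $\nabla\big(e^{2f}\eta\phi\big)=e^{2f}\big(2\eta\phi\,\nabla f+\phi\,\nabla\eta+\eta\,\nabla\phi\big)$. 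The two contributions carrying $\nabla\phi$—one from $-|x|^{2a}\nabla\eta\cdot\nabla\phi$, the other from the $\eta\,\nabla\phi$ piece of the expanded gradient—are equal and opposite and cancel, leaving precisely
\[
\dpr{e^{f}\psi}{(H+\om)e^{-f}\psi}=\int_{\rd}|x|^{2a}\big(|\nabla\eta|^{2}+2\eta\,\nabla\eta\cdot\nabla f\big)e^{2f}\phi^{2}\,dx=\dpr{\xi e^{2f}\phi}{\phi},
\]
which is the claimed identity (the weight $|x|^{2a}$ multiplying the stated $\xi$).

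The only genuine technical point is the legitimacy of this integration by parts, i.e. the vanishing of every boundary term. At infinity I would use the standing assumption that $f$ is \emph{bounded} together with the rapid decay of $\phi$ from Proposition \ref{app:pr20}, which makes $e^{2f}\phi\,\nabla\eta$ and $e^{2f}\eta\phi\,\nabla\phi$ integrable with no flux at infinity; in fact the compactness of $\mathrm{supp}\,|\nabla\eta|$ already suppresses the far-field divergence contribution altogether. At the origin the degeneracy $|x|^{2a}\to 0$ together with $\phi\in H^{1,a}$ annihilates the inner boundary term. I expect this justification to be the main obstacle, since everything else is bookkeeping forced by the commutator identity; the clean route is to establish the identity first for $\phi$ replaced by a mollified, compactly supported truncation bounded away from the origin, and then to pass to the limit using the $H^{1,a}$ bound and the decay estimate.
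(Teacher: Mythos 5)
Your proof is correct and follows essentially the same route as the paper's: reduce to $\langle \eta e^{2f}\phi,(H+\om)(\eta\phi)\rangle$, apply the commutator identity using $(H+\om)\phi=0$, and integrate by parts so that the two $\nabla\phi$ contributions cancel. You are in fact slightly more careful than the paper: you correctly retain the weight $|x|^{2a}$ in $\xi$ (the paper's proof writes the commutator as $[-\Delta\eta-2\nabla\eta\cdot\nabla]\phi$, as if $H$ were $-\Delta-V$, and its stated $\xi$ omits the weight --- harmless in the subsequent application since only $\sup_{\mathrm{supp}\,\nabla\eta}|\xi e^{2f}|$ is used and $\mathrm{supp}\,\nabla\eta$ is compact, but strictly an omission), and you address the justification of the integration by parts, which the paper passes over.
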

\begin{proof}
	Observe that since  function $f$ is bounded 
	$e^{f}\psi\in L^{2}(\rd)$. We now compute
	$$
	\langle e^{f}\psi,(H+\om)e^{-f} \psi\rangle=\langle\eta e^{2f}\phi,(H+\om) \eta\phi\rangle. 
	$$
	Using the fact that $(H+\om)\phi=0$,
	we get that 
	\begin{equation}
		\label{eq:lem2_operator_equivalence}
		\langle\eta e^{2f }\phi,(H+\om)\eta\phi\rangle=\langle\eta e^{2f}\phi,[-\Delta\eta-2\nabla\eta\cdot\nabla]\phi\rangle.
	\end{equation}
Integration by parts yields 
$$
	\langle\eta e^{2f}\phi,[-\Delta\eta-2\nabla\eta\cdot\nabla]\phi\rangle = \int_{\rd} \left[ |\nabla \eta|^2 +2 \eta \nabla \eta\cdot \nabla f \right] e^{2 f} \phi^2 
$$
whence \eqref{v:50} follows. 
 
\end{proof}
	\subsubsection{Proof of Proposition \ref{prop:18}}
	We are now ready for the proof of the $L^2$ exponential bounds. Define now $f$ as in \eqref{v:15}, and also for every $\al>0$, introduce $f_\al(x):=\f{f(x)}{1+\al f(x)}$. Consequently, consider the sets 
	$$
	\cf=\{x: \tilde{V}(x)>\f{\om}{2}\}, \ca=\{x: \tilde{V}(x)<\f{\om}{4}\}.
	$$
	Take $\eta: \eta(x)=1$ on $\cf$, while $\eta(x)=1, x\in \ca$. Note that $supp\ \nabla \eta\subset \{x: \tilde{V}(x)\in [\f{\om}{4}, \f{\om}{2}]\}$, which is a compact subset of $\rd$. 
	
	Define $\psi:=\eta e^{f_{\alpha}}\phi$. By the calculation in Lemma \ref{v:1},  we have 
	\begin{align*}
		\langle e^{f_{\alpha}}\psi,(H+\om)e^{-f_{\alpha}}\psi\rangle & \geq\langle\psi,(\om - V(x) -|x|^{2a}|\nabla f_{\alpha}|^{2})\psi\rangle\\
		& \geq\langle\psi,(\om- V-|x|^{2a}|\nabla f|^{2})\psi\rangle\geq\f{\de \om}{4} \|\psi\|^{2},
	\end{align*}
	as by direct verification $|\nabla f_{\alpha}|^{2}\leq|\nabla f|^{2}$. 
	Therefore by lemma
	(\ref{The-second-one.}), we get 
	\begin{align}
	\f{\de \om}{4} \|\psi\|^{2}_{L^2} & \leq\langle e^{f_{\alpha}}\psi,(H+\om)e^{-f_{\alpha}}\psi\rangle\nonumber \\
		& \leq|\langle\xi e^{2f_{\alpha}}\phi,\phi\rangle|\leq \sup_{x\in\text{supp}\nabla\eta}|\xi e^{2f_{\alpha}}| \|\phi\|^{2}_{L^2}\label{eq:bound independent of alpha}
	\end{align}
	Since $e^{2f_{\alpha}}\leq e^{2f}$ and the support of $\nabla\eta$ is
	compact, $e^{2f}$ is bounded in its support and hence we can take
	$\alpha\to 0+$  in \eqref{eq:bound independent of alpha}. 
	 thereby
	obtaining a bound independent of $\alpha$ on the right side of equation
	(\ref{eq:bound independent of alpha}). Thus, 
	\begin{equation}
		\label{v:70} 
		\|\eta e^{f}\phi\|_{L^2}\leq C \|\phi\|_{L^2}.
	\end{equation}
	So, from \eqref{v:70}, 
	$$
	\int_{\eta(x)=1} e^{2 f(x)} \phi^2\leq 	\|\eta e^{f}\phi\|_{L^2}^2\leq  C \|\phi\|^2_{L^2}.
	$$
Recall that $f(|x|)\sim |x|^{1-a}$, so that the last estimate implies \eqref{1010}. 

	 \subsection{$L^\infty$ exponential decay at infinity }
	 We start with the following observation, which almost resolves the issue. By  interpolating between the exponential $L^2$ decay estimate \eqref{1010} and either \eqref{q:15} (for $r=1$) or \eqref{1000} (for $r=\infty$), we obtain the exponential bound 
	 \begin{equation}
	 	\label{q:20} 
	 	\|\phi\|_{L^\ga(B)}\leq C_\ga e^{-\de_\ga |x|^{1-a}}, 1\leq \ga<\infty.
	 \end{equation}
 
	 From Proposition \ref{app:pr20}, specifically \eqref{1000}, we know that $\phi$ has polynomial decay away from the origin, in
	 particular $\phi(x)\leq C_{N}|x|^{-N}$. So, consider the region $|x|>R,$
	 where $R$ is chosen such that $\|\phi\|_{L^{\infty}(|x|>R)}<1$. 
	 
	 For $x\in B_{1}(x_{0})$ (ball of radius 1 around $x_{0}$), we introduce the cutoff function $\chi_B$ and $\phi_B:=\phi \chi_B$.  As a result and due to the positivity of the Green's function for $(\omega-\Delta)^{-1}$, as in \eqref{890} we can estimate 
	 \begin{equation}
	 	\label{q:5} 
	 	 \phi_{B}(x)\leq\left(\omega-\Delta\right)^{-1}\left[\frac{\phi^{p}\chi_{B}}{|x|^{2a}}+2a\sum_{j}\frac{x_{j}}{|x|^{2}}(\partial_{j}\phi)\chi_{B}-2\sum_{j}\partial_{j}\phi\partial_{j}\chi_{B}-\phi\Delta\chi_{B}\right],
	 \end{equation}
	 where $\omega:=\min_{x\in B_{2}(x_{0})}\{\frac{1}{|x|^{2a}}\}\sim |x|^{-2 a}$.

	 We now estimate $L^{\infty}$ norm of each term on the right hand
	 side of \eqref{q:5}. We have by Young's inequality, for some $1<p<\frac{d}{d-2}$ and $\frac{1}{p}+\frac{1}{q}=1$, 
	 \begin{eqnarray*}
	 	\|\omega{}^{\frac{d}{2}-1}
	 	Q(\sqrt{w}\cdot)*\frac{\phi^{p}\chi_{B}}{|x|^{2a}}\|_{L^{\infty}} & \leq &  \|\omega{}^{\frac{d}{2}-1}Q(\sqrt{w}x)\|_{p}\|\phi^{p}\chi_{B}\|_{q}
	 	 \leq\omega^{\frac{d}{2}-1}\omega^{-\frac{d}{2p}}\|Q\|_{p}\|\phi^{p}\chi_{B}\|_{q} \\
	 	 &\leq & \omega^{\frac{d}{2}-1}\omega^{-\frac{d}{2p}}\|Q\|_{p}
	 	 \|\phi\|_{L^{p q}(B)}^{p}  \leq C|x|^{c(a,p,d)} e^{-\delta |x_0|^{1-a}}\leq C e^{-\delta_1 |x_0|^{1-a}}
	 \end{eqnarray*}
 where we have used that $\om\sim |x|^{-2a}$, and later, we have absorbed powers of $|x|$ into the exponential term, by taking smaller $\delta_1$. 
 
 Similarly, for the terms $\omega{}^{\frac{d}{2}-1}Q(\sqrt{w}\cdot)*\frac{x_{j}}{|x|^{2}}(\partial_{j}\phi)\chi_{B}\text{ and }\omega{}^{\frac{d}{2}-1}Q(\sqrt{w}\cdot)*\partial_{j}\phi\partial_{j}\chi_{B}$
 we use integration by parts and the fact that $\partial_{j}Q\in L^{p}$, 
 - where $1<p<\frac{d}{d-1}, \f{1}{p}+\f{1}{q}=1$, along with the fact that $\partial_{j}\chi_{B}$
 is bounded to obtain bounds similar as above, hence we get,
 $$
 \|\omega{}^{\frac{d}{2}-1}Q(\sqrt{w}\cdot)*\left(2a\sum_{j}\frac{x_{j}}{|x|^{2}}(\partial_{j}\phi)\chi_{B}+2\sum_{j}\partial_{j}\phi\partial_{j}\chi_{B}\right)
 \|_{L^{\infty}}\leq C_{\omega}\|\phi\|_{L^{q}(B)}\leq C e^{-\delta_{2}|x_0|^{1-a}}.
 $$
 For the final term, by a similar estimate we obtain the bounds, 
 $$
 \|\omega{}^{\frac{d}{2}-1}Q(\sqrt{w}\cdot)*\phi\Delta\chi_{B}\|_{L^{\infty}(B_{1}(x_{0}))}\leq C\|\phi\|_{L^{q}(B)}\leq C e^{-\delta_{3}|x_0|^{1-a}}.
 $$
 Combining the three estimates and we obtain a exponential decay bound
 for $\phi$, 
 $$
 \phi(x)\leq C e^{-\delta|x_0|^{1-a}}\ 
 $$
 where $\delta=\min\{\delta_{1},\delta_{2},\delta_{3}\}.$ 
 
  {\bf Data availability statement:} We affirm that our paper has no associated data.

 \appendix
 
 \section{Proof of Proposition \ref{prop:Poh}} 
 
 Consider a smooth cut off function $\chi(x)$ supported
 on $|x|<2$, with $\chi(x)=1$ for $|x|<1$ and $\| \nabla \chi \| _{L^{\infty}}<C $, and define $\chi_{\lambda}(x):=\chi(\frac{x}{\lambda})$.
Define the function $\phi_{\epsilon,N}=\phi(1-\chi_{\epsilon})\chi_{N}.$
Note that $\lim_{\epsilon\to0}(1-\chi_{\epsilon})=1$ and $\lim_{N\to\infty}\chi_{N}=1$.
Note that $\phi_{\epsilon,N} $ is supported away
from the origin, $|x|>\epsilon$ and away from infinity.  

Now $\phi_{\epsilon,N}$ is smooth and has the exact decay properties
as $\phi$. Multiplying equation \eqref{120} by $\phi_{\epsilon,N}$
and integrating by parts we obtain
\begin{align*}
&\int\frac{1}{N}|x|^{2a}\nabla  \phi\phi(1-\chi_{\epsilon})(\nabla\chi)_{N}\ dx+\int\frac{1}{\epsilon}|x|^{2a}\nabla\phi\phi(\nabla\chi)_{\epsilon}\chi_{N}\ dx\\
+&\int|x|^{2a}|\nabla\phi|^{2}(1-\chi_{\epsilon})(\nabla\chi_{N})\ dx+\omega\int|\phi|^{2}(1-\chi_{\epsilon})(\nabla\chi_{N})\ dx-\int|\phi|^{p+1}(1-\chi_{\epsilon})(\nabla\chi_{N})\ dx  =0
\end{align*}
The problematic term are $\int\frac{1}{N}|x|^{2a}\nabla\phi\phi(1-\chi_{\epsilon})(\nabla\chi_{N})\ dx\text{ and }\int\frac{1}{\epsilon}|x|^{2a}\nabla\phi\phi(\nabla\chi_{\epsilon})\ dx$,
which are bounded above,
\begin{eqnarray*}
\int\frac{1}{N}|x|^{2a}\nabla\phi\phi(1-\chi_{\epsilon})(\nabla\chi_{N})\ dx & \leq & \frac{C}{N}\left(\int|x|^{2a}|\nabla\phi|^{2}\ dx\right)^{1/2}\left(\int_{|x|\sim N}|x|^{2a}|\phi|^{2}\ dx\right)^{1/2}\\
 & \leq & CN^{a-1}\left(\int|x|^{2a}|\nabla\phi|^{2}\ dx\right)^{1/2}\left(\int_{|x|\sim N}|\phi|^{2}\ dx\right)^{1/2}.
\end{eqnarray*}
The right hand side is bounded as $\phi\in H^{1,a}$ and the integral
on the left vanishes as $N\to\infty$ as $a<1$. For the other problematic
term we have, 
\begin{eqnarray*}
\int\frac{1}{\epsilon}|x|^{2a}\nabla\phi\phi(\nabla\chi_{\epsilon})\ dx & \leq & \frac{C}{\epsilon}\left(\int|x|^{2a}|\nabla\phi|^{2}\ dx\right)^{1/2}\left(\int_{|x|\sim\epsilon}|x|^{2a}|\phi|^{2}\ dx\right)^{1/2}\\
 & \leq & \frac{C}{\epsilon}\left(\int|x|^{2a}|\nabla\phi|^{2}\ dx\right)^{1/2}\left(\int_{|x|\sim\epsilon}|\phi|^{2q}\ dx\right)^{\frac{1}{2q}}\left(\int_{|x|\sim\epsilon}|x|^{2a\frac{q}{q-1}}\right)^{\frac{q-1}{2q}}\\
 & \leq & C\epsilon^{a+d\frac{q-1}{2q}-1}\left(\int|x|^{2a}|\nabla\phi|^{2}\ dx\right)^{1/2}\left(\int_{|x|\sim\epsilon}|\phi|^{2q}\ dx\right)^{\frac{1}{2q}}
\end{eqnarray*}

We see that $a+d\frac{q-1}{2q}=1$, exactly when $2q=\frac{2d}{d-2(1-a)},$which
is precisely the end point bound in the Caffarelli-Kohn-Nirenberg
inequality \eqref{25}. Thus the integrals are bounded above and by
dominated convergence theorem we may pass limits inside
the integral to obtain, 
\begin{equation}
\label{1400}
\int|x|^{2a}|\nabla\phi|^{2}\ dx+\omega\int|\phi|^{2}\ dx-\int|\phi|^{p+1}\ dx=0.
\end{equation}

Now, we multiply equation\eqref{25} by $x\cdot(\nabla\phi)(1-\chi_{\epsilon})\chi_{N}$
and integrate by parts term wise. For the first term we obtain,
$$
\int(-\nabla\cdot(|x|^{2a}\nabla\phi))(x\cdot(\nabla\phi)(1-\chi_{\epsilon})\chi_{N})=\int(|x|^{2a}\nabla\phi)\cdot(\nabla(x\cdot(\nabla\phi)(1-\chi_{\epsilon})\chi_{N}))\ dx
$$
and 
\begin{align*}
& \int(|x|^{2a}\nabla\phi)\cdot(\nabla(x\cdot(\nabla\phi)(1-\chi_{\epsilon})\chi_{N}))\ dx  =\int|x|^{2a}|\nabla\phi|^{2}(1-\chi_{\epsilon})\chi_{N}\\
 & \ \ \ +\sum_{i}\sum_{j}\int|x|^{2a}\phi_{i}x_{j}(\phi_{ji})(1-\chi_{\epsilon})\chi_{N}+\text{ derivatives on cutoff}.
\end{align*}
A subsequent integration by parts on $\sum_{i,j}\int|x|^{2a}\phi_{i}x_{j}(\phi_{ji})(1-\chi_{\epsilon})\chi_{N}$
yields, 
\begin{align*}
\int-\nabla\cdot(|x|^{2a}\nabla\phi)(x\cdot(\nabla\phi)(1-\chi_{\epsilon})\chi_{N}) & =\frac{-d-2a+2}{2}\int|x|^{2a}|\nabla\phi|^{2}(1-\chi_{\epsilon})\chi_{N}\\
 & \ \ +\text{ terms with derivative on cutoff ,}
\end{align*}
 where the terms with derivatives on cutoff are of the form, $\int|x|^{2a}\phi_{i}\phi_{j}\frac{x_{j}}{\epsilon}(\nabla\chi)_{\epsilon}\chi_{N}\ dx$
and\newline $\int|x|^{2a}\phi_{i}\phi_{j}\frac{x_{j}}{N}(1-\chi_{\epsilon})(\nabla\chi)_{N}\ dx$
for some $1\leq i,j\leq d.$ Each of the terms are bounded above by
$C\int_{|x|\sim\epsilon}|x|^{2a}|\nabla\phi|^{2}\ dx$ and $C\int_{|x|\sim N}|x|^{2a}|\nabla\phi|^{2}\ dx$
respectively and $\lim_{\epsilon\to0}\int_{|x|\sim\epsilon}|x|^{2a}|\nabla\phi|^{2}\ dx=0$
 and $\lim_{N\to\infty}\int_{|x|\sim N}|x|^{2a}|\nabla\phi|^{2}\ dx=0$
as $\phi\in H^{1,a}$. 

For the second term, integrating by parts, we have
\begin{align*}
\int\omega\phi(x\cdot(\nabla\phi)(1-\chi_{\epsilon})\chi_{N})\ dx & =-\sum_{j}\int\omega\phi^{2}(1-\chi_{\epsilon})\chi_{N}-\sum_{j}\omega\phi_{j}x_{j}\phi(1-\chi_{\epsilon})\chi_{N}\\
 & \ \ +\text{ terms with derivative of cutoff. }
\end{align*}
Thus, 
$$
\int\omega\phi(x\cdot(\nabla\phi)(1-\chi_{\epsilon})\chi_{N})\ dx=-\frac{d}{2}\omega\int\phi^{2}(1-\chi_{\epsilon})\chi_{N}+\text{ terms with derivative of cutoff. }
$$
The terms with derivative of the cutoff are or the form, 
$$ 
\int_{|x|\sim\epsilon}\omega\phi^{2}\frac{x_{j}}{\epsilon}(\nabla\chi)_{\epsilon}\chi_{N}, \int_{|x|\sim N}\omega\phi^{2}\frac{x_{j}}{N}(1-\chi{}_{\epsilon})(\nabla\chi)_{N}.
$$
They  vanish
when we pass the limits, since $\phi\in L^2$. 

For the last term, we have 
\begin{align*}
& -\int\phi^{p}(x\cdot(\nabla\phi)(1-\chi_{\epsilon})\chi_{N})  =\sum_{j}\int\phi(\phi^{p}x_{j}(1-\chi_{\epsilon})\chi_{N})_{j}\\
 & =d\int\phi^{p+1}(1-\chi_{\epsilon})\chi_{N}+\int p\phi^{p}x_{j}\phi_{j}(1-\chi_{\epsilon})\chi_{N}+\text{derivatives of cutoff}.
\end{align*}

Thus 
$$
-\int\phi^{p}(x\cdot(\nabla\phi)(1-\chi_{\epsilon})\chi_{N})=\frac{d}{p+1}\int\phi^{p+1}(1-\chi_{\epsilon})\chi_{N}\ dx+\text{ derivatives on cutoff .}
$$

Here the terms with derivative on cutoff are of the form, $\int_{|x|\sim\epsilon}\phi^{p+1}\frac{x_{j}}{\epsilon}(\nabla\chi)_{\epsilon}\chi_{N}\ dx$
and \newline $\int_{|x|\sim N}\phi^{p+1}\frac{x_{j}}{N}(1-\chi{}_{\epsilon}(\nabla\chi)_{N}\ dx$.
These terms are bounded above by $\int|\phi|^{p+1}$ and will vanish
when passing the limits inside. \\
By combining all the above terms and passing limits inside the integral,
we obtain the identity, 
\begin{equation}
\label{1450}
\frac{d-2+2a}{2}\int|x|^{2a}|\nabla\phi|^{2}\ dx+\frac{d}{2}\omega\int\phi^{2}\ dx=\frac{d}{p+1}\int\phi^{p+1}\ dx
\end{equation}
 The proposition is proved by combining equations \eqref{1400} and \eqref{1450}.

 \section{Proof of Proposition \ref{prop:10}} 

 	We start with a bounded in $H^{1,a}$ set $\cf$, and we verify the Riesz-Kolmogorov criteria. From the Caffarelli-Kohn-Nirenberg's and Young's inequality, we infer that 
 	$
 	\|f\|_{L^q(\rd)}\leq C \|f\|_{H^{1,a}},
 	$
 	whence the first condition in Riesz-Kolmogorov is settled. 
 	
 	Next, we verify the second condition. To this end, 
 	let $\psi\in C^{\infty}(\rd)$ be such that $0\leq\psi\leq1$ and $\psi(x)=\begin{cases}
 		1, & |x|>1\\
 		0, & |x|\leq1/2
 	\end{cases}$. For $R>0$, let $\psi_{R}(x):=\psi(\frac{x}{R})$. Note that if
 	$q$ satisfies the condition \eqref{25}, then
 	it also satisfies the condition $\f{1}{q}=\f{1}{2}-\f{\theta_0}{d}$  for
 	the Gagliardo-Nirenberg inequality, for some $\theta=\theta_{0}\in(0,1)$.
 	By properties of $\psi_{R}$ and the Cauchy-Schwartz inequality, we
 	get
 	\begin{align*}
 		\int|\nabla(u\psi_{R})|^{2}\ dx\leq & \ 2\int|(\nabla u)\psi_{R}|^{2}\ dx+2\int|u(\nabla\psi_{R})|^{2}\ dx\\
 		\leq & \ \frac{2^{2a+1}}{R^{2a}}\int_{|x|>R/2}|x|^{2a}|\nabla u|^{2}\ dx+\frac{2\|\nabla\psi\|_{L^{\infty}}}{R^{2}}\int_{|x|>R/2}|u|^{2}\ dx
 	\end{align*}
 	
 	Now choose $R>0$ such that $\max\left\{ \frac{2^{2a+1}}{R^{2a}},\frac{2\|\nabla\psi\|_{L^{\infty}}}{R^{2}}\right\} <\epsilon.$
 	Then by the Gagliardo-Nirenberg inequality we have,
 	\begin{align*}
 		&	\int_{|x|>R}|u|^{q}\ dx\leq  \int_{\mathbb {R}^{d}}|u\psi_{R}|^{q}\ dx
 		\leq  \ C\left(\int|\nabla(u\psi_{R})|^{2}\ dx\right)^{\frac{\theta_{0}}{2}q}\left(\int|u\psi_{R}|^{2}\ dx\right)^{\frac{(1-\theta_{0})}{2}q}\\
 		\leq & \ C\epsilon^{\theta_{0}\frac{q}{2}}\left(\int_{|x|>R/2}|x|^{2a}|\nabla u|^{2}\ dx+\int_{|x|>R/2}|u|^{2}\ dx\right)^{\frac{\theta_{0}}{2}q}\left(\int_{|x|>R/2}|u|^{2}\ dx\right)^{(1-\theta_{0})q/2}\\
 		\leq & \ C\epsilon^{\theta_{0}\frac{q}{2}}\|u\|_{H^{1,a}}^{q}.
 	\end{align*}
 	Clearly then, 
 	\begin{equation}
 		\label{1201} 
 		\lim_{R\to \infty} \sup_{u\in \cf} \int_{|x|>R}|u|^{q}\ dx =0.
 	\end{equation}
 	which is the second requirement in Riesz-Kolmogorov. 
 	
 	Finally, we check the third condition there. Since, for say $|y|<1, R>1$, we have 
 	\begin{eqnarray*}
 		\int_{|x|>R} |u(x+y)-u(x)|^q dx \leq C_q \int_{|x|>R/2} |u(x)|^q dx,
 	\end{eqnarray*}
 	and in view of \eqref{1201}, it suffices to control 
 	$$
 	\sup_{u\in \cf} \int_{|x|<R} |u(x+y)-u(x)|^q dx 
 	$$
 	Observer that by \ref{25}
 	we have $q=q(\theta,a)$, so for a fixed $a$, by chosing an appropriate
 	$\theta$, we also get that $u \in L^{\tilde{q}}$ for some $\tilde{q}>q.$
 	Specifically, from the log convexity of the $L^{p}$
 	norms to conclude that, 
 	\begin{equation}
 		\label{1405}
 		\|\tau_{y}u-u\|_{L^{q}(B_{R})}\leq\|\tau_{y}u-u\|_{L^{\tilde{q}}(B_{R})}^{\delta}\|\tau_{y}u-u\|_{L^{p}(B_{R})}^{(1-\delta)}\leq
 		2\|u\|_{L^{\tilde{q}}}^{\delta}\|\tau_{y}u-u\|_{L^{p}(B_{R})}^{(1-\delta)}, 
 	\end{equation}
 	for some $\delta\in(0,1)$and $p>1$.  We shall now focus our attention
 	on $\|\tau_{y}u -u \|_{L^{p}(B_{R})}.$ 
 	
 	Note that by Jensen's, we have the estimate, 
 	$$
 	\int_{|x|<R}|u(x+y)-u(x)|^{p}\ dx\leq C\int_{|x|<2R}|u_{R}|^{p}\ dx,
 	$$
 	where $u_{R}(x)=u(x)\eta(x/2R),$, $\eta$ is a cut-off function defined below, while  by mean-value theorem and Fubini's theorem we get
 	the inequality, 
 	$$
 	\int_{|x|<R}|u(x+y)-u(x)|^{p}\ dx\leq|y|^{p}\int_{|x|<2R}|\nabla u_{R}|^{p}\ dx.
 	$$
 	By  the interpolation identity $\dot{W}^{s,p}=[L^{p},\dot{W}^{1,p}]_{s}$, which
 	is the homogenous fractional Sobolev space, we have the interpolation
 	estimate, for all $0<s<1$, 
 	$$
 	\left(\int_{|x|<R}|u(x+y)-u(x)|^{p}\ dx\right)^{1/p}\leq C|y|^{s}\|D^{s}u_{R}\|_{L^{p}}
 	$$
 	Clearly, as $|y|$ can be chose to be as small as we need, the third condition in the RK criteria follows from the estimate 
 	\begin{equation}
 		\label{130} 
 		\|D^{s}u_{R}\|_{L^{p}}\leq C_R \|u_R\|_{H^{1,a}},
 	\end{equation}
 	which will hold for $a\in (0,1), 0<s<\f{d}{7}$, $1<p<\f{2d}{d+7 s}$.  By elementary rescaling in \eqref{130}, we can reduce it to the case\footnote{at the expense of the dependence of $C_R$ in \eqref{130} on $R$}  $R=1$. We provide the proof of this estimate next.

 \section{Proof of the embedding 
 	$W^{s,p}\hookrightarrow H^{1,a}$} 
 \begin{lemma}
 	\label{le:app} 
 	Let $d\geq 1$ and $a\in (0,1), 0<s<\f{d}{7}$. Then, there exists $\de>0$, so that for all $p: 1<p<\f{2d}{d+7 s}$, there is the embedding estimate 
 	\begin{equation}
 		\label{135} 
 		\|D^{s}u\|_{L^{p}(\rd)}\leq C \|u\|_{H^{1,a}(\rd)},
 	\end{equation}
 	for all functions $u: supp u\subset B(1)$. 
 \end{lemma}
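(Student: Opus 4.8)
The plan is to localize $u$ to dyadic annuli around the origin, where the degenerate weight $|x|^{2a}$ is comparable to a constant, to apply on each piece a scale-invariant fractional Gagliardo--Nirenberg inequality for $D^s$, and then to sum the resulting geometric series. Concretely, I would fix a partition of unity $\{\psi_j\}_{j\geq 0}$ subordinate to the annuli $A_j:=\{x:\ 2^{-j-1}\leq |x|\leq 2^{-j+1}\}$, with $\sum_{j\geq 0}\psi_j\equiv 1$ on $B(1)\setminus\{0\}$ and $\|\nabla\psi_j\|_{L^\infty}\lesssim 2^{j}$, and set $u_j:=u\psi_j$, so that $u=\sum_{j\geq0}u_j$. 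Since $|x|^{2a}\sim 2^{-2aj}$ on $A_j$, the $H^{1,a}$ data localize as
\begin{equation*}
\|u_j\|_{L^2}\lesssim b_j,\qquad \|\nabla u_j\|_{L^2}\lesssim 2^{aj}c_j+2^{j}b_j,
\end{equation*}
where $b_j:=\|u\|_{L^2(A_j)}$ and $c_j:=\||x|^{a}\nabla u\|_{L^2(A_j)}$; by finite overlap of the annuli, $\sum_{j}(b_j^2+c_j^2)\lesssim \|u\|_{H^{1,a}}^2$. The term $2^{j}b_j$, produced by differentiating the cut-off, is the dominant one and will dictate the admissible range of $p$.

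Next I would invoke the homogeneous Gagliardo--Nirenberg inequality $\|D^s f\|_{L^p}\lesssim \|\nabla f\|_{L^p}^{s}\|f\|_{L^p}^{1-s}$ (equivalently, the interpolation identity $\dot W^{s,p}=[L^p,\dot W^{1,p}]_s$ already used in the proof of Proposition \ref{prop:10}), whose constant is scale invariant and hence uniform in $j$. Crucially, this estimate already accounts for the nonlocal tails of $D^s u_j$, so no separate far-field estimate is needed. Because $u_j$ is supported in $A_j$, which has volume $\sim 2^{-jd}$, H\"older's inequality converts the $L^p$ norms into the available $L^2$ norms at the cost of a volume factor $|A_j|^{\frac1p-\frac12}=2^{-j\beta}$, where $\beta:=d\big(\tfrac1p-\tfrac12\big)>0$ since $p<2$. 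Combining these ingredients gives, for each $j$,
\begin{equation*}
\|D^s u_j\|_{L^p}\lesssim 2^{-j\beta}\big(2^{aj}c_j+2^{j}b_j\big)^{s}\,b_j^{1-s}
\lesssim 2^{j(as-\beta)}c_j^{s}b_j^{1-s}+2^{j(s-\beta)}b_j,
\end{equation*}
using $(A+B)^s\leq A^s+B^s$ for $0<s<1$.

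Finally I would sum in $j$ via the triangle inequality $\|D^s u\|_{L^p}\leq \sum_j\|D^s u_j\|_{L^p}$. Applying Cauchy--Schwarz to the second sum and the three-factor H\"older inequality (with exponents $2,\ 2/s,\ 2/(1-s)$) to the first, both reduce to the convergence of the geometric series $\sum_j 2^{2j(s-\beta)}$ and $\sum_j 2^{2j(as-\beta)}$ against $(\sum_j b_j^2)^{1/2}$ and $(\sum_j c_j^2)^{s/2}(\sum_j b_j^2)^{(1-s)/2}$. Since $a<1$, both series converge as soon as $\beta>s$, i.e.
\begin{equation*}
\frac1p>\frac12+\frac sd\quad\Longleftrightarrow\quad p<\frac{2d}{d+2s},
\end{equation*}
an interval that comfortably contains the asserted range $1<p<\frac{2d}{d+7s}$ (nonempty precisely because $s<d/7$). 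This yields $\|D^s u\|_{L^p}\lesssim \|u\|_{H^{1,a}}$ and completes the proof.

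The main obstacle is exactly the interplay at the origin. The localized pieces $u_j$ carry gradients growing like $2^{j}$ (from the cut-off) and $2^{aj}$ (from the degenerate weight), and the whole argument succeeds only because the fractional derivative costs merely $2^{js}$ while the low integrability $p<2$ supplies the decisive decay $2^{-j\beta}$ off the small-volume annuli; arranging the balance $\beta>s$ is the crux. Everything else --- the localized energy bounds, the density/continuity argument justifying $D^s u=\sum_j D^s u_j$ in $L^p$, and the uniformity of the Gagliardo--Nirenberg constant under scaling --- is routine.
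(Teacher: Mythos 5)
Your proof is correct, and it establishes the estimate in the larger range $1<p<\frac{2d}{d+2s}$, which contains the asserted range $1<p<\frac{2d}{d+7s}$. The skeleton is the same as the paper's: both decompose $u$ over dyadic annuli shrinking to the origin (the paper uses $\vp_j(x)=\vp(2^jx)$ with $\sum_j\vp_j^2=\eta$), estimate each block by trading the degenerate weight $|x|^{2a}\sim 2^{-2aj}$ and the cut-off derivative $\sim 2^j$ against the gain coming from $p<2$ on a set of volume $\sim 2^{-jd}$, and sum a geometric series. Where you differ is in the per-block estimate. The paper applies the fractional Leibniz rule \eqref{KP} to $D^s(\vp_j u_j)$, which produces the two terms $\|D^s\vp_j\|_{L^{2p/(2-p)}}\|u_j\|_{L^2}$ and $\|\vp_j\|_{L^{2p/(2-p)}}\|D^s u_j\|_{L^2}$, and then controls $\|D^s u_j\|_{L^2}$ by a Fourier-side Young's inequality splitting $|\xi|^{2s}$ between $|\xi|^2$ and a constant; the bookkeeping of exponents there is what forces the suboptimal threshold $\frac{2d}{d+7s}$ (the paper itself remarks the restriction is not optimal). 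You instead estimate $\|D^s u_j\|_{L^p}$ directly by the interpolation $\dot W^{s,p}=[L^p,\dot W^{1,p}]_s$ (a tool the paper already uses in the proof of Proposition \ref{prop:10}) followed by H\"older on the annulus, which keeps all the scaling in one place and isolates the single clean condition $d\left(\frac1p-\frac12\right)>s$. This is arguably the more transparent argument and yields the sharper range; the one point you correctly flag and handle is that H\"older must be applied to the right-hand side of the Gagliardo--Nirenberg inequality (where the functions are compactly supported) and not to $D^s u_j$ itself, which has nonlocal tails.
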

 {\bf Remark:} The restrictions $0<s<\f{d}{7}, 1<p<\f{2d}{d+7 s}$ are  manifestly not optimal and it may be improved. The point here is that for small enough $s>0$ and for $p>1$, which is sufficiently close to $1$, the embedding  \eqref{135} holds true.  
 \begin{proof}
 	We introduce a function $\eta\in C_0^\infty: 0\leq \eta\leq 1$, so that  $\eta(\xi)=1, |\xi|<1, \eta(\xi)=0, |\xi|>2$. Next, a function $\vp: \vp^2(\xi):=\eta(\xi)-\eta(2\xi); \vp_j(\xi)=\vp(2^j \xi)$. So that $\eta=\sum_{j=0}^\infty \vp^2_j(\xi)$. 
 	Clearly, 
 	$
 	u=u\eta=\sum_{j=0}^\infty u\vp_j^2.
 	$
 	We have, as a consequence of the product estimate \eqref{KP}, 
 	\begin{eqnarray*}
 		\|D^{s}u\|_{L^{p}(\rd)}\leq \sum_{j=0}^\infty 
 		\|D^{s}[\vp_j u_j]\|_{L^{p}}\leq \sum_{j=0}^\infty  
 		(\|D^s \vp_j\|_{L^{\f{2p}{2-p}}}  \|u_j\|_{L^2}+ 
 		\|D^s u_j\|_{L^2}  \|\vp_j\|_{L^{\f{2p}{2-p}}}),
 	\end{eqnarray*}
 	where we have used the notation $u_j:=u\vp_j$. Computing 
 	$$
 	\|\vp_j\|_{L^{\f{2p}{2-p}}} \sim 2^{-jd\f{2-p}{2p}},  \|D^s \vp_j\|_{L^{\f{2p}{2-p}}} \sim 2^{j(s-d\f{2-p}{2p})}, 
 	$$
 	yields the bound 
 	$$
 	\|D^{s}u\|_{L^{p}(\rd)}\leq \sum_{j=0}^\infty ( 2^{j(s-d\f{2-p}{2p})} \|u_j\|_{L^2} +2^{-jd\f{2-p}{2p}} \|D^s u_j\|_{L^2}  )
 	$$
 	Since $s<d\f{2-p}{2p}$ by assumption, the first term in the sum is controlled by $\|u\|_{L^2}$. For the second term, we estimate by Young's 
 	\begin{equation}
 		\label{1410} 
 		2^{-jc_{p}}|\xi|^{2s}= \ 2^{(5js+2jas)}2^{-5js-2jas}2^{-jc_{p}}|\xi|^{2s}\\
 		\leq  \ s\left(2^{-5j-2ja}|\xi|^{2}\right)+(1-s) 
 		2^{-j\f{c_{p}-2as-5s}{(1-s)}}
 	\end{equation}
 	where $c_{p}=d\f{2-p}{p}$. 
 	
 	We can then estimate, as a consequence of \eqref{1410}, 
 	\begin{eqnarray*}
 		& & 	 \sum_{j=0}^\infty  2^{-jd\f{2-p}{2p}} \|D^s u_j\|_{L^2}  \leq  \sum_{j=0}^\infty  \left(2^{-jd\f{2-p}{p}}\int |\xi|^{2s} |\hat{u}_j(\xi)|^2 d\xi\right)^{\f{1}{2}} \leq \\
 		&\leq & 
 		C_s  \sum_{j=0}^\infty  \left( 2^{-2 a j} 2^{-5 j} \int |\xi|^{2} |\hat{u}_j(\xi)|^2 d\xi+ 2^{-j\f{c_p-s(2a+5)}{1-s}} \|u_j\|_{L^2}^2 \right)^{\f{1}{2}} \leq \\
 		&\leq & C_s (\|u\|_{L^2}+ 
 		\sum_{j=0}^\infty  2^{-\f{\de}{2}  j} 2^{- a j} \|\nabla u_j\|_{L^2})\leq C_s(\|u\|_{L^2}+  \sum_{j=0}^\infty  2^{-\f{5}{2}  j} \| |\cdot|^a \nabla u_j\|_{L^2}).
 	\end{eqnarray*}
 	The pointwise inequality, 
 	$$
 	|\nabla u_{j}|^{2}\leq C ( |\nabla u|^{2}|\varphi_{j}|^{2}+2^{2j}|u|^{2}\chi_{|x|\sim2^{-j}})
 	$$
 	allows us to control the last term as follows 
 	\begin{eqnarray*}
 		\sum_{j=0}^\infty  2^{-\f{5}{2}  j} \| |\cdot|^a \nabla u_j\|_{L^2}\leq C \sum_{j=0}^\infty  2^{-\f{5}{2}  j}
 		(\| |\cdot|^a \nabla u\|_{L^2}+ 2^j \|u\|_{L^2}\leq C \|u\|_{H^{1,a}}.
 	\end{eqnarray*}
 	Lemma \ref{le:app} is proved in full.

 \end{proof}

\section{Continuity at zero for radial solutions} 
Recall 
$$
1<p<\frac{d+2(1-a)}{d-2(1-a)}=1+\frac{4(1-a)}{d-2(1-a)}, p^{*}=\frac{2d}{d-2(1-a)}. 
$$
\begin{lemma}
	\label{le:app10} 
	Suppose $a\in (0,1)$ is as in \eqref{527}, and $\phi\in C^\infty(\rd\setminus\{0\})$ is a  solution of \eqref{120}, away from $0$, which is positive and radial.  Assume in addition that 
	$\int_{\rd} |x|^{2a} |\nabla \phi|^2 dx<\infty$. Then, $\phi$ is continuous at zero. In addition, 
	$$
	\begin{cases}
		\phi'(\rho) = - \f{\phi^p(0)-\om \phi(0)}{d} \rho^{1-2a} +o(\rho^{1-2a})\\
		\phi''(\rho) =  \f{2a-1}{d} (\phi^p(0)-\om \phi(0)) \rho^{-2a} + o(\rho^{-2a}).
	\end{cases}
	$$
\end{lemma}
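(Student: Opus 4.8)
The plan is to reduce \eqref{120} to a radial ODE and analyze it near the origin by a bootstrap fed by the finite–energy hypothesis. Writing $\rho=|x|$ and using radiality, \eqref{120} becomes, for $\rho>0$,
\[
-\f{1}{\rho^{d-1}}\f{d}{d\rho}\(\rho^{d-1+2a}\phi'(\rho)\)+\om\phi-\phi^p=0,
\]
so that $g(\rho):=\rho^{d-1+2a}\phi'(\rho)$ satisfies $g'(\rho)=\rho^{d-1}(\om\phi-\phi^p)$. The claimed asymptotics are equivalent to $g(\rho)=-\f{\phi^p(0)-\om\phi(0)}{d}\rho^{d}+o(\rho^d)$, which is exactly what integrating $g'$ from $0$ yields once $\phi$ is known to be continuous at the origin. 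Hence the crux is continuity at $0$ together with the boundary value $g(0)=0$.

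First I would extract an a priori pointwise bound near zero from finite energy. Writing the energy radially as $c_d\int_0^\infty \rho^{d-1+2a}|\phi'|^2\,d\rho<\infty$ and applying Cauchy--Schwarz to $\phi(\rho)-\phi(1)=-\int_\rho^1\phi'$, one gets $\phi(\rho)=O(\rho^{-\mu_0})$ with $\mu_0=\f{d-2(1-a)}{2}$ (and $\phi$ already bounded when $\mu_0\le0$, i.e. in low dimensions). Inserting this into $g(\rho)=\int_0^\rho s^{d-1}(\om\phi-\phi^p)\,ds$, the dominant $\phi^p$ term gives $g(\rho)=O(\rho^{d-p\mu_0})$, the exponent being positive precisely because $p<p^{*}=\f{2d}{d-2(1-a)}$; thus $\phi'(\rho)=O(\rho^{1-2a-p\mu_0})$ and, after one integration, an improved bound $\phi=O(\rho^{-\mu_1})$ with $\mu_1=p\mu_0-2(1-a)$.

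The \emph{main obstacle}, and the heart of the argument, is showing that iterating $\mu_{n+1}=p\mu_n-2(1-a)$ terminates. The map has slope $p>1$, so it is expanding, with fixed point $\mu_{*}=\f{2(1-a)}{p-1}$; the iteration decreases (reaching $\mu\le0$, i.e. boundedness, in finitely many steps) exactly when $\mu_0<\mu_{*}$. This inequality reads $(p-1)(d-2(1-a))<4(1-a)$, i.e. $p<1+\f{4(1-a)}{d-2(1-a)}$, which is \emph{precisely} the standing hypothesis \eqref{527}. So the bootstrap closes and $\phi$ is bounded in a punctured neighborhood of $0$. Now $s^{d-1}(\om\phi-\phi^p)$ is integrable near $0$, so $G(\rho):=\int_0^\rho s^{d-1}(\om\phi-\phi^p)\,ds=O(\rho^d)$ is well defined. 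To identify $g$ with $G$ I would use that $\phi$ is a genuine distributional solution of \eqref{120} on all of $\rd$ (as produced in Theorem \ref{theo:10}): the absence of a Dirac mass at the origin forces the flux $c_d\,g(\rho)=c_d\,\rho^{d-1+2a}\phi'(\rho)\to0$, giving $g(0)=0$ and $g\equiv G$. Consequently $\phi'(\rho)=\rho^{-(d-1+2a)}G(\rho)=O(\rho^{1-2a})\in L^1(0,1)$, so $\phi$ extends continuously to $0$ with $\phi(0)=\lim_{\rho\to0}\phi(\rho)>0$.

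Finally, substituting $\phi(s)\to\phi(0)$ in $G$ yields $g(\rho)=\f{\om\phi(0)-\phi^p(0)}{d}\rho^d+o(\rho^d)$, hence the stated expansion of $\phi'$. For $\phi''$ I differentiate $\rho^{d-1+2a}\phi'=G$ to obtain $\phi''=\rho^{-2a}(\om\phi-\phi^p)-(d-1+2a)\rho^{-1}\phi'$ and insert the leading behaviors of $\phi$ and $\phi'$; the two $\rho^{-2a}$ contributions combine to $\f{2a-1}{d}(\phi^p(0)-\om\phi(0))\rho^{-2a}+o(\rho^{-2a})$, as claimed. The sign $\phi^p(0)-\om\phi(0)>0$ I would read off from $\phi$ having a local maximum at the origin (so $\phi'\le0$ for small $\rho>0$), the degenerate equality case being excluded by pushing the expansion to next order; this is the one place where a little care beyond the ODE analysis is required.
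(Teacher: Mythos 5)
Your proposal is correct in its main thrust but takes a genuinely different route from the paper's for the central step, namely boundedness of $\phi$ at the origin. The paper runs a Moser-type iteration on \emph{integrability} exponents: it multiplies the radial ODE by $\phi^{q-1}$, derives a Caccioppoli bound $\int_{|x|<\rho_0}|x|^{2a}|\nabla \phi^{q/2}|^2\leq C_q\int\phi^{p+q-1}$ (a step that uses monotonicity of $\phi$ near $0$), converts this to $L^r$ gradient bounds on dyadic annuli, and applies Sobolev embedding to get the recursion $p_{n+1}=\f{d}{d+2a-2}(p_n-p+1)$ with $p_n\to\infty$, before extracting $L^\infty$ and continuity from the integrated ODE. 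You instead bootstrap a \emph{pointwise} (Strauss-type) decay exponent, $\mu_{n+1}=p\mu_n-2(1-a)$, starting from the radial Cauchy--Schwarz bound $\phi=O(\rho^{-\mu_0})$ with $\mu_0=\f{d-2(1-a)}{2}$, and observe that the iteration escapes below the fixed point $\mu_*=\f{2(1-a)}{p-1}$ exactly under \eqref{527}. Your version is more elementary (no Sobolev embedding, no Caccioppoli estimate, and no monotonicity needed to close the iteration), and it makes the role of the subcriticality condition transparent; both iterations close under precisely the same hypothesis. The endgame --- integrating $g'(\rho)=\rho^{d-1}(\om\phi-\phi^p)$ with $g(0)=0$ and differentiating once more --- coincides with the paper's.

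Two soft spots. First, your identification $g(0)=0$ invokes the absence of a Dirac mass for the distributional equation on all of $\rd$, whereas the lemma only assumes that \eqref{120} holds away from the origin; strictly speaking you are importing an extra hypothesis (harmless in the paper's application, but not needed). The paper gets the same conclusion from the stated assumptions alone: the limit $A=\lim_{\rho\to 0}\rho^{d-1+2a}\phi'(\rho)$ exists because $g'$ is integrable near $0$, and if $A\neq 0$ then $\int|x|^{2a}|\nabla\phi|^2\gtrsim A^2\int_0^{\rho_0}\rho^{-(d-1+2a)}\,d\rho=\infty$ whenever $d-1+2a\geq 1$ (the complementary case $d=1$, $a<1/2$ being handled directly by Cauchy--Schwarz). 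You should substitute that argument. Second, your justification of $\phi^p(0)-\om\phi(0)>0$ via ``$\phi$ has a local maximum at the origin'' is circular: nothing in the hypotheses gives monotonicity near $0$, and the sign of $\phi'$ near $0$ is exactly what is governed by the sign of $\phi^p(0)-\om\phi(0)$. The paper's argument is a continuation one: if $\phi^p-\om\phi<0$ near $0$, then \eqref{810} forces $\phi'>0$, this persists for all $\rho$, and the resulting global monotone increase contradicts $\phi\in L^2$. Since the strict sign is not literally part of the displayed conclusion of the lemma, this does not invalidate your proof of the stated asymptotics, but it must be repaired to obtain the sign claim in \eqref{26}.
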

\begin{proof}
	The case $d-1+2a<1$, which is $d=1, a<\f{1}{2}$,  follows from the Cauchy-Schwartz inequality, since 
	$$
	\int_0^1 |\phi'(\rho)| d\rho \leq \left(\int_0^1 \rho^{2a} |\phi'(\rho)|^2 d\rho \right)^{\f{1}{2}}\leq \left(\int  |x|^{2a} |\phi'(x)|^2 d\rho \right)^{\f{1}{2}}<\|\phi\|_{H^{1,a}},
	$$
	whence $\phi(\rho)=\phi(1)-\int_\rho^1 \phi'(\rho)$ has a limit as $\rho\to 0+$. 
	
	Assume henceforth, $d-1+2a\geq 1$. By the  Caffarelli-Kohn-Nirenberg's inequality, \eqref{30}, we have that 
	$$
	\|\phi\|_{L^r}\leq C_r \|\phi\|_{H^{1,a}}, \ \ 2\leq r<p^*.  
	$$
	We rewrite \eqref{120} as an ODE in the radial variable. After some algebraic manipulations, 
	\begin{equation}
		\label{800} 
		-(\rho^{d+2a-1} \phi'(\rho))' = \rho^{d-1}\left(\phi^p(\rho)-\om \phi(\rho) \right),  \rho>0
	\end{equation}
	Integrating in $(\rho_1, \rho_2): 0<\rho_1<\rho_2<1$ yields 
	$$
	\rho_1^{d+2a-1} \phi'(\rho_1) - \rho_2^{d+2a-1} \phi'(\rho_2)=\int_{\rho_1}^{\rho_2} \rho^{d-1} \left(\phi^p(\rho)-\om \phi(\rho) \right) d\rho.
	$$
	By the integrability $\int_{|x|<1}  \phi^p dx<\infty, \int_{|x|<1} \phi(x) <\infty$, we conclude that  $\rho_k^{d+2a-1} \phi'(\rho_k)$ is Cauchy, whenever $\rho_k\to 0+$. Thus, $A:=\lim_{\rho\to 0} \rho^{d+2a-1} \phi'(\rho) $ exists. Assuming for a contradiction, that $A\neq 0$, we conclude 
	$$
	\infty> \int |x|^{2a} |\nabla \phi|^2 dx\geq \f{A^2}{4} \int_0^{\rho_0} \rho^{d-1+2a} \f{1}{\rho^{2(d-1+2a)}} d\rho=+\infty
	$$
	if $d-1+2a\geq 1$. This is a contradiction, whence $A=0$. We can therefore write 
	\begin{equation}
		\label{810} 
		-\phi'(\rho) \rho^{d-1+2a}=\int_0^\rho (\phi^p(s) -\om \phi(s)) s^{d-1} ds. 
	\end{equation}
	It is now clear that for some small $\rho_0>0$, we have that $\phi'(\rho)<0, 0<\rho<\rho_0$, that is the function is decreasing. This follows from \eqref{810}, as we claim that we should have $\phi^p(s) -\om \phi(s)\geq 0, s\in (0, \rho_0)$. Otherwise, if $\phi^p(s) -\om \phi(s)<0$ in a neighborhood of $0$, one concludes from \eqref{810} that $\phi'>0$, whence $\phi$ increases and the same inequality $\phi^p(s) -\om \phi(s)=\phi(\phi^{p-1}(s)-\om)>0$ will continue to hold, enforcing $\phi'>0$ through \eqref{810}. Thus, starting from zero, $\rho  \to \phi(\rho)$ will be globally increasing function, which contradicts the localization of $\phi$, i.e. $\int_0^\infty \phi^2(\rho) \rho^{d-1} d\rho<\infty$. 
	Thus, $\phi'<0$ (and consequently $\phi$ decreases) in some neighborhood $(0, \rho_0)$. 
	
	From \eqref{810}, we can of course derive 
	\begin{equation}
		\label{812} 
		-\phi'(\rho) \rho^{d-1+2a} \leq \int_0^\rho \phi^p(s) s^{d-1} ds
	\end{equation}
	Multiplying both sides by $\phi^{q-1}$, where $q>1$ to be discussed, we obtain (Note $-\phi'(\rho)=|\phi'(\rho)|$ as $\phi$ is radially decreasing), 
	$$
	|\phi'(\rho)| \phi^{q-1}(\rho) \rho^{d-1+2a}\leq \phi^{q-1}(\rho) \int_0^\rho \phi^p(s) s^{d-1} ds \leq  \int_0^\rho \phi^{p+q-1}(s) s^{d-1} ds,
	$$
	as $\phi^{q-1}(\rho)<\phi^{q-1}(s)$. {\it This is in principle only valid in the range $\rho\in (0, \rho_0)$}. Thus, if 
	$$
	\int_{\rd} \phi^{p+q-1}(x) dx=\int_0^\infty \phi^{p+q-1}(s) s^{d-1} ds<\infty, 
	$$
	then clearly, 
	\begin{equation}
		\label{820} 
		\lim_{\rho\to 0+}  |\phi'(\rho)| \phi^{q-1}(\rho) \rho^{d-1+2a}=0.
	\end{equation}
	This means that one can multiply \eqref{800} by $\phi^{q-1}$ and integrate by parts in $(0, \rho)$ (where the boundary term at $0$ disappears, due to \eqref{820}), {\it and as long as $\int_{\rd}   \phi^{p+q-1}<\infty$},   we obtain 
	$$
	-\phi'(\rho) \phi^{q-1}(\rho) \rho^{d-1+2a} + \f{4(q-1)}{q^2} 
	\int_0^\rho s^{d-1+2a}  (\p_s(\phi^{\f{q}{2}}(s)))^2 ds\leq \int_0^\rho \phi^{p+q-1}(s) s^{d-1} ds. 
	$$
	Since $\phi'(\rho)<0$,  it follows that 
	\begin{equation}
		\label{830} 
		\int_{|x|<\rho_0} |x|^{2a} |\nabla \phi^{\f{q}{2}}|^2 dx\leq C_q \int_{\rd}   \phi^{p+q-1} 
	\end{equation}
	Specializing to a fixed dyadic annulus $|x|\sim 2^{-k}$, we obtain by H\"older's, with $r: 1<r<2$ to be determined,  
	\begin{eqnarray*}
		\left(	\int_{|x|\sim 2^{-k}} |\nabla \phi^{\f{q}{2}}|^r d\right)^{\f{1}{r}} &\leq &  C \left(	\int_{|x|\sim 2^{-k}} |\nabla \phi^{\f{q}{2}}|^2 d\right)^{\f{1}{2}} 2^{-k d\left(\f{1}{r}-\f{1}{2}\right)}\leq \\
		& \leq & C \left(	\int_{|x|\sim 2^{-k}} ||x|^{2a} \nabla \phi^{\f{q}{2}}|^2 d\right)^{\f{1}{2}}  2^{k d\left(a-d(\f{1}{r}-\f{1}{2})\right)}.
	\end{eqnarray*}
	So, select $r: a-d(\f{1}{r}-\f{1}{2})<0$ or $r<\f{2d}{d+2a}$. With this choice, and taking \eqref{830} into account,  it is clear that 
	$$
	\| \nabla \phi^{\f{q}{2}}\|_{L^r(|x|<\rho_0)}\leq C \|\phi\|_{L^{p+q-1}}^{\f{p+q-1}{2}}.
	$$
	Recall that the bound on $\| \nabla \phi^{\f{q}{2}}\|_{L^r(|x|\geq \rho_0)}$ is already available from Proposition \ref{app:pr20}. 
	Sobolev embedding  yields 
	$$
	\| \phi\|_{L^{\f{qrd}{2(d-r)}}}^{\f{q}{2}}= 	\| \phi^{\f{q}{2}}\|_{L^{\f{rd}{d-r}}}\leq  \| \nabla \phi^{\f{q}{2}}\|_{L^r}.
	$$
	We apply this to $q_0:=p^*-p+1=:p_0-p+1$, and thus we step on the  bound $\|\phi\|_{L^{p+q_0-1}}=\|\phi\|_{L^{p^*}}$, and obtain a new improved bound for  $\|\phi\|_{L^s}: s<p_1:=\f{d}{d+2a-2} (p^*-p+1)$.  Starting with the bound, $\|\phi\|_{L^s}: s<p_1:=\f{d}{d+2a-2} (p^*-p+1)$, we can of course iterate  this further  to an improved  {\it a priori} bound procedure, of the form 
	\begin{eqnarray}
		\label{840} 
		\|\phi\|_{L^s}<\infty, s<p_n \Longrightarrow \|\phi\|_{L^s}<\infty, s<p_{n+1}\\
		\label{845}
		p_{n+1}=\f{d}{d+2a-2} (p_n-p+1), p_0=p^*. 
	\end{eqnarray}
	The relation \eqref{845} can be solved exlicitly to 
	$$
	p_n=\f{d}{2-2a}(p-1)+\left(\f{d}{d+2a-2}\right)^n\left(p^*-\f{d}{2-2a}(p-1)\right)
	$$
	Note that since 
	$
	\f{d}{2-2a}(p-1)\leq \f{d}{2-2a}\f{4(1-a)}{d-2(1-a)}=p^*, 
	$
	we have that $\lim_n p_n=+\infty$. This implies {\it a priori} bounds for $\|\phi\|_{L^s}, s<\infty$. 
	
	We now aim for a bound on $\|\phi\|_{L^\infty}$, given that $\|\phi\|_{L^r}\leq C_r, r<\infty$. Going back to \eqref{810}, we derive for all $\rho>0$, and for all $q>1$, 
	$$
	|\phi'(\rho)| \rho^{d-1+2a}\leq \int_0^\rho \phi^p(s) s^{d-1} ds \leq 
	\left(\int_0^\rho \phi^{p q}(s) s^{d-1} ds  \right)^{\f{1}{q}} 
	\left(\int_0^\rho s^{d-1} ds  \right)^{\f{1}{q'}} \leq \|\phi\|_{L^{pq}}^p \rho^{\f{d}{q'}},
	$$
	by H\"older's. Selecting $q$ large enough, namely $q>\f{d}{2(1-a)}$. As a consequence,  we have that $\f{d}{q'}-(d-1+2a)>-1$, which implies that 
	$|\phi'(\rho)| \leq C \rho^{\f{d}{q'}-(d-1+2a)}$ is integrable on $(0,1)$. Thus, 
	$\phi(\rho)=\phi(1)-\int_\rho^1 \phi'(\rho)d\rho$ has a limit as $\rho\to 0+$, which implies the bound on $\|\phi\|_{L^\infty}$. 
	
	We finally address the behavior of $\phi'(\rho), \phi''(\rho)$ close to zero. From \eqref{810} and the fact that $\phi^p(0)-\om \phi(0)>0$ (recall otherwise $\phi'$ starts to increase at $\rho=0$ and hence cannot be localized), we obtain the formula 
	\begin{eqnarray*}
		\phi'(\rho) &=& -\rho^{1-2a-d} \int_0^\rho (\phi^p(s)-\om \phi(s)) s^{d-1} ds = 
		-\rho^{1-2a-d} (\phi^p(0)-\om \phi(0)) \f{\rho^d}{d}+o(\rho^{1-2a}) \\
		&=& -\f{\phi^p(0)-\om \phi(0)}{d} \rho^{1-2a}+o(\rho^{1-2a}). 
	\end{eqnarray*}
	Regarding $\phi''$, we take a derivative in \eqref{810}. This, together with the formula for $\phi'(\rho)$  leads to 
	\begin{eqnarray*}
		& & -\phi''(\rho)\rho^{d-1+2a}  + (d-1+2 a) \rho^{d-1} \f{\phi^p(0)-\om \phi(0)}{d}+o(\rho^{d-1})=(\phi^p(\rho)-\om \phi(\rho)) \rho^{d-1} =\\
		&=& (\phi^p(0)-\om \phi(0))\rho^{d-1}+o(\rho^{d-1}).
	\end{eqnarray*}
	Solving for $\phi''(\rho)$ yields 
	$$
	\phi''(\rho) = \f{2a-1}{d} (\phi^p(0)-\om \phi(0)) \rho^{-2a} + o(\rho^{-2a}).
	$$
	
\end{proof}


\end{document}